\newcommand{\N}{\mathcal{N}}
\newcommand{\F}{\mathcal{F}}
\newcommand{\s}{\Bbb S}
\newcommand{\ppt}{\Pi_t}
\newcommand{\Z}{\Bbb Z}
\newcommand{\RR}{\Bbb R}
\newcommand{\C}{\Bbb C}
\newcommand{\ee}{\left}
\newcommand{\rr}{\right}
\newtheorem{theorem}{Theorem}
\newtheorem{proposition}[theorem]{Proposition}
\newtheorem*{prop}{Proposition \ref{tech}}
\newtheorem{cor}[theorem]{Corollary}
\newtheorem{lemma}[theorem]{Lemma}
\newtheorem{remark}[theorem]{Remark}
\newcommand{\tn}{{\tilde N}}
\numberwithin{theorem}{section}
\begin{document}
\title{Dimension-Free $L^p$-Maximal Inequalities in $\Z_{m+1}^N$}
\maketitle

\begin{abstract}
For $m \geq 2$, let $(\Z_{m+1}^N, |\cdot|)$ denote the group equipped with the $l^0$ (aka Hamming) metric,
\[ |y|= \left| \big( y(1), \dots, y(N) \big) \right| := \left|\{ 1 \leq i \leq N : y(i) \neq 0 \}\right|,\]
and define the $L^1$-normalized indicator of the $r$-sphere,
\[ \sigma_r := \frac{1}{\ee|\{|x| = r\}\rr|} 1_{\{ |x| = r\}}.\]
We study the $L^p \to L^p$ mapping properties of the maximal operator
\[ M^N f (x) := \sup_{r \leq N} | \sigma_r*f| \]
acting on functions defined on $\Z_{m+1}^N$.

Specifically, we prove that for all $p>1$, there exist absolute constants $C_{m,p}$ so that
\[ \ee\| M^N f \rr\|_{L^p(\Z_{m+1}^N)} \leq C_{m,p} \| f \|_{L^p(\Z_{m+1}^N)} \]
for all $N$.

This result may be viewed as an extension of the main theorem of \cite{BK} -- the existence of dimension-free $L^p$-bounds for $p > 1$ for the spherical maximal function in the hypercube, $\Z_{2}^N$.
Indeed, our approach is that of \cite{BK}, which grew out of the arguments of \cite{HKS}, which were in turn motivated by the spectral technique developed in \cite{NS} and \cite{S} in the context of pointwise ergodic theorems on general groups.
\end{abstract}

\section{Introduction}\label{intro}
In $\RR^N$, let
\[ M_{B}^{\RR^N}f(x) := \sup_{r > 0} \frac{c_N}{r^N} \int_{|y| \leq r} |f(x-y)| \ dy,\]
denote the standard Hardy-Littlewood maximal function, where $c_N^{-1}$ is the volume of the $N$-dimensional Euclidean unit ball.

A celebrated result of Stein and Str\"{o}mberg \cite{SS} in Euclidean harmonic analysis concerns the following dimension-independent bounds:
\begin{theorem}[Theorem A of \cite{SS}]
For each $p > 1$ there exists a constant $A_p$ independent of $N$ so that
\[ \left\| M_{B}^{\RR^N}f \right\|_{L^p(\RR^N)} \leq A_p \| f \|_{L^p(\RR^N)}.\]
In particular, while the maximal operators are themselves dimension-dependent, they are all uniformly bounded in $L^p \to L^p$ operator-norm by the same constant, $A_p$.
\end{theorem}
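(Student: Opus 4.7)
The strategy is to reduce the ball maximal operator to the spherical maximal operator by polar coordinates, compare the spherical averages to the heat semigroup via Fourier analysis to get a dimension-free $L^2$ bound, and then interpolate.

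\emph{Step 1: Polar reduction.} For $f \geq 0$, writing the ball average in polar coordinates yields
\[ \frac{c_N}{r^N} \int_{|y| \leq r} f(x-y) \, dy = \frac{N}{r^N} \int_0^r s^{N-1} \mathcal{A}_s f(x) \, ds \leq M_S^{\RR^N} f(x), \]
where $\mathcal{A}_s f(x) := \int_{S^{N-1}} f(x - s\omega) \, d\sigma(\omega)$ is the normalized spherical average and $M_S^{\RR^N} := \sup_{s > 0} \mathcal{A}_s$ is the spherical maximal operator. Hence $M_B^{\RR^N} f \leq M_S^{\RR^N} |f|$, and it suffices to prove dimension-free $L^p$ bounds for $M_S^{\RR^N}$.

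\emph{Step 2: Heat-semigroup comparison and dimension-free $L^2$ bound.} The Fourier multiplier of $\mathcal{A}_s$ is $\Phi_N(s|\xi|)$ where $\Phi_N(u) = c_N' u^{-(N-2)/2} J_{(N-2)/2}(u)$, while the heat operator $P_t := e^{t\Delta}$ has multiplier $e^{-t|\xi|^2}$. Choosing $t(s,N) := s^2/(2N)$ makes the two symbols agree to leading order when $s|\xi| \lesssim \sqrt{N}$. Decomposing $\mathcal{A}_s = P_{t(s,N)} + E_s$, the contribution of $\sup_t |P_t f|$ is bounded on $L^p$ for every $p>1$ with a constant independent of $N$ by Stein's general maximal theorem for symmetric Markov semigroups. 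The error is handled by an $L^2$ $g$-function,
\[ \| \sup_s |E_s f| \|_{L^2}^2 \leq 2 \int_0^\infty \| E_s f \|_{L^2} \, \| s \partial_s E_s f \|_{L^2} \, \frac{ds}{s}, \]
which by Plancherel reduces to showing that the multiplier-side integrals of $|\Phi_N(u) - e^{-u^2/(2N)}|^2 \, du/u$ and its $u$-derivative analogue are bounded uniformly in $N$. This follows from uniform-in-order Bessel asymptotics: a power-series comparison with the Gaussian for $u \leq \sqrt{N}$, and the oscillatory decay $|\Phi_N(u)| \lesssim u^{-(N-1)/2}$ for $u \geq \sqrt{N}$.

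\emph{Step 3: Interpolation to $1 < p < \infty$.} Interpolating the dimension-free $L^2$ bound against the trivial $L^\infty$ bound gives dimension-free $L^p$ estimates for $2 \leq p \leq \infty$. The main obstacle is the range $1 < p < 2$, since the standard Vitali-covering weak-$(1,1)$ estimate for $M_B^{\RR^N}$ has a constant growing like $N$, so a direct Marcinkiewicz interpolation is fatal. Following Stein and Str\"omberg, I would establish a refined endpoint estimate — for instance a restricted weak-type estimate exploiting the semigroup structure, or a weak-$(1,1)$ bound with only logarithmic dependence on $N$ — and combine it with the dimension-free $L^{p_0}$ bound of Step 2 so that the residual dimension dependence is absorbed into a factor bounded by the dimension-free $L^{p_0}$ constant. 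The technical heart of the argument lies in (i) making the Bessel comparison in Step 2 genuinely uniform in $N$ across both the non-oscillatory regime $s|\xi| \lesssim \sqrt{N}$ and the oscillatory regime $s|\xi| \gtrsim \sqrt{N}$, and (ii) pushing the refined endpoint estimate through the interpolation to yield an absolute constant $A_p$ for every $p > 1$.
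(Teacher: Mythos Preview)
This theorem is not proved in the paper at all; it is quoted from Stein--Str\"omberg \cite{SS} in the introduction purely as motivating background for the paper's own results on $\Z_{m+1}^N$. There is therefore no proof in the paper to compare your attempt against.

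A few comments on your sketch relative to the actual argument in \cite{SS}. The Stein--Str\"omberg proof for $p>1$ does not pass through the spherical maximal operator. It proceeds by a direct semigroup subordination: one shows that the normalized ball average is pointwise dominated by an absolute constant times an average of heat (or Poisson) operators, and then invokes the Hopf--Dunford--Schwartz maximal theorem for symmetric diffusion semigroups, which is dimension-free for every $p>1$. This disposes of the entire range $1<p<\infty$ in one stroke, with no square-function comparison and no interpolation problem near $p=1$.

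Your Step~1 reduction $M_B^{\RR^N}\le M_S^{\RR^N}$ is correct pointwise but creates a real obstruction: for small $N$ the spherical maximal operator is \emph{unbounded} on $L^p(\RR^N)$ when $1<p\le N/(N-1)$, so the claimed reduction cannot give an $L^p$ bound there. You would need to handle the finitely many dimensions $N\le p/(p-1)$ separately by classical Vitali theory; this is easy but must be said. More seriously, your Step~3 is not a proof but a statement of the difficulty: you acknowledge that the standard weak-$(1,1)$ constant grows with $N$ and gesture toward a ``refined endpoint estimate'' without supplying one. That missing estimate is precisely the content of the theorem in the range $1<p<2$, and the semigroup-subordination route in \cite{SS} is designed to bypass it entirely. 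As written, your outline establishes the result only for $p\ge 2$.
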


This result was more recently extended by Bourgain \cite{B} to the \emph{cubic} maximal function
\[ M_Q^{\RR^N}f(x) := \sup_{r > 0} \frac{1}{(2r)^N} \int_{y \in Q_r} |f(x-y)| \ dy,\]
where
\[ Q_r := \ee\{ y = (y(1),\dots, y(N)) : |y(i)|\leq r \ \text{ for each } 1 \leq i \leq N \rr\} \]
is the cube of side-length $2r$ centered at the origin.

\begin{theorem}[Theorem of \cite{B}]
For each $p > 1$ there exist constants $A_p'$ independent of $N$ so that
\[ \left\| M_Q^{\RR^N}f \right\|_{L^p(\RR^N)} \leq A_p' \| f \|_{L^p(\RR^N)}.\]
\end{theorem}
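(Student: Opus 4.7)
The approach is Fourier-analytic. Since the kernel $(2r)^{-N} \mathbf{1}_{Q_r}$ is $L^1$-normalized, $M_Q^{\RR^N}$ is bounded on $L^\infty$ with norm $1$; by Marcinkiewicz interpolation it therefore suffices to establish an $L^2$ bound with a constant independent of $N$, and then interpolate to reach all $p > 1$ (the only $p$-dependence in $A_p'$ coming from the interpolation itself). A standard dilation-parameter reduction, writing $A_r f - A_{2^k} f = \int_{2^k}^r \partial_s A_s f \, ds$ for $r \in [2^k, 2^{k+1}]$ and applying Cauchy--Schwarz in $s$ (where $A_r$ denotes convolution with $(2r)^{-N}\mathbf{1}_{Q_r}$), then controls the continuous supremum by the dyadic supremum $\sup_{k \in \Z} |A_{2^k} f|$ together with the square function $\bigl(\sum_k \int_{2^k}^{2^{k+1}} s |\partial_s A_s f|^2 \, ds\bigr)^{1/2}$.

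Both of these are handled on the Fourier side via Plancherel, where the multiplier factors as
$$m_r(\xi) = \prod_{j=1}^N \frac{\sin(r\xi_j)}{r\xi_j}.$$
The key observation is that this product structure permits a frequency-space decomposition according to the ``high'' set $H_r(\xi) := \{ 1 \leq j \leq N : r|\xi_j| > 1\}$. When $|H_r(\xi)|$ is large, each high-frequency factor contributes decay of order $(r|\xi_j|)^{-1}$, so the entire product is exponentially small in $|H_r(\xi)|$, giving summability over $k$ with constants independent of $N$. When $|H_r(\xi)|$ is small, $m_r$ is close to the Gaussian multiplier $\prod_j e^{-c r^2 \xi_j^2}$, whose associated maximal function is dimension-free by Stein's theorem on symmetric diffusion semigroups applied to the heat semigroup on $\RR^N$. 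Crucially, for each fixed $\xi$ only $O(1)$ dyadic scales place a given coordinate in the transition window $r|\xi_j| \sim 1$, so the frequency-by-frequency analysis patches together into a uniform dyadic estimate.

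The main obstacle is the square function: a naive triangle-inequality bound on $\partial_r m_r(\xi) = m_r(\xi) \sum_j (\partial_r \,\text{sinc}(r\xi_j))/\text{sinc}(r\xi_j)$ costs a factor of $N$. The remedy is to split $m_r$ into its ``low'' and ``high'' pieces relative to $H_r(\xi)$ and differentiate them separately: the low piece is absorbed by Gaussian comparison, where the logarithmic derivative collapses into the single quantity $2 r |\xi|^2$ rather than $N$ distinct terms, while the high piece is dominated by its own fast decay. Combining the two pieces with Plancherel and summing dyadically then yields the desired $N$-independent $L^2$ bound, from which the theorem follows by interpolation.
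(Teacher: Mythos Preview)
The paper does not prove this statement; it is quoted from Bourgain \cite{B} purely as background, with no proof supplied. There is therefore no ``paper's own proof'' to compare your proposal against.

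That said, your proposal contains a genuine gap. You assert that a dimension-free $L^2$ bound together with the trivial $L^\infty$ bound yields all $p>1$ by Marcinkiewicz interpolation. This is false: interpolating between $L^2$ and $L^\infty$ only covers $p\in[2,\infty]$. The range $1<p<2$ requires an independent argument, and indeed that is precisely the content of the cited paper \cite{B}; the dimension-free $L^2$ bound for cubes had already been known since Bourgain's work in the 1980s. Bourgain's argument for $p<2$ is substantially more involved and does not follow from the Fourier-side $L^2$ machinery you describe. Even restricting attention to $L^2$, your sketch is too loose at the key step: the claim that ``only $O(1)$ dyadic scales place a given coordinate in the transition window'' is true per coordinate, but there are $N$ coordinates, and you have not explained why the aggregate count over all coordinates stays uniform in $N$ when you patch the frequency-by-frequency analysis together; similarly, the Gaussian comparison for the product $\prod_j \mathrm{sinc}(r\xi_j)$ is not as immediate as your phrase ``close to the Gaussian multiplier'' suggests.
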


The purpose of this article is to establish comparable dimension independent bounds in a discrete setting.

Specifically, for $m \geq 2$, let $\Z_{m+1}^N$ denote the group equipped with the so-called $l^0$-metric,
\[ |y|:= \ee|\{ 1 \leq i \leq N : y(i) \neq 0 \}\rr|.\]
We also define the ($L^2$-normalized) characters
\[ \chi_S(x):= \frac{1}{(m+1)^{N/2}} \xi_m^{x \cdot S} = \frac{1}{(m+1)^{N/2}} \prod_{i \in S} (\xi_m)^{s(i) x(i)} \]
where $\xi_m := e^{2\pi i /(m+1)}$ is a primitive $(m+1)$th root of unity.
Define the the Fourier transform
\[ \F f(S) = \hat{f}(S) = \sum_{x \in Z_{m+1}^N} f(x) \chi_S(x),\]
and the $L^1$-normalized indicator function of the $r$-sphere:
\[
\sigma_r := \frac{1}{|\s_r|} 1_{\s_r}.
\]
We adopt the convention that $\sigma_r$ is $0$ and the respective spheres are empty for $r<0$.

Motivated by \cite{B} and \cite{SS}, we will be interested in establishing dimension-independent bounds for the family of maximal functions
\[ Mf(x) = M^N f(x):= \sup_{r \leq N} |\sigma_r*f| \]
acting on functions in $\Z_{m+1}^N$.

We establish the following theorem:
\begin{theorem}\label{MAIN}
For any $p > 1$, and any $m \geq 2$, there exists a constant $C_{p,m}$ so that
\[ \left\| M f \right\|_{L^p(\Z_{m+1}^N)} \leq C_{p,m} \|f\|_{L^p(\Z_{m+1}^N)}.\]
In particular, the above bounds exist independent of the dimension, $N$.
\end{theorem}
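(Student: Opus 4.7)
The plan is to follow the spectral strategy of \cite{BK}, suitably adapted to the $(m+1)$-ary setting. The first step is to compute the Fourier multiplier $\hat{\sigma}_r(S)$ explicitly. By the symmetry of the Hamming sphere under coordinate permutations and coordinate-wise multiplication by units of $\Z_{m+1}$, $\hat{\sigma}_r(S)$ depends only on $n := |\supp(S)|$. Expanding the sum over $\{|x|=r\}$ by support and using $\sum_{a=1}^m \xi_m^{as(i)} = -1$ for $s(i)\ne 0$ yields an $(m+1)$-ary Krawtchouk polynomial
\[ \hat{\sigma}_r(S) = K_{N,m,r}(n) := \frac{1}{\binom{N}{r}\, m^r} \sum_{k=0}^{r}(-1)^k m^{r-k}\binom{n}{k}\binom{N-n}{r-k}. \]
Next I would establish two types of quantitative estimates: (i) a Taylor-type expansion giving $K_{N,m,r}(n) \approx \bigl(1 - \tfrac{(m+1)r}{mN}\bigr)^n$ up to multiplicative errors of order $O(n^2/N)$ when $n$ is small relative to $N$, and (ii) a uniform decay bound $|K_{N,m,r}(n)| \leq \varepsilon_{n,r}$ for $n$ above some threshold, with $\sum_{r} \varepsilon_{n,r}^2$ summable in $n$.

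Let $E_n$ denote orthogonal projection onto characters of weight $n$, so that $\sigma_r * f = \sum_{n=0}^N K_{N,m,r}(n)\, E_n f$. Fix a threshold $n_0$ (something like $\sqrt{N}$, or an absolute constant depending on $p$) and split $\sigma_r = \sigma_r^{\mathrm{lo}} + \sigma_r^{\mathrm{hi}}$ accordingly. For the low-frequency part, introduce the heat semigroup $T_t := \sum_n e^{-tn}E_n$ and compare $\sigma_r^{\mathrm{lo}}$ with $T_{t(r)}$ where $t(r)$ is chosen so that $e^{-t(r)}$ matches the first-order behaviour of $K_{N,m,r}(\cdot)$ at $n=0$. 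The Taylor bound in (i), combined with a square-function argument controlling the sum over $r$ of the errors, should yield
\[ \Bigl\|\sup_{r \leq N} |(\sigma_r^{\mathrm{lo}} - T_{t(r)}) * f|\Bigr\|_{L^p} \leq C_{m,p}\, \|f\|_{L^p}. \]
Since $T_t$ is a symmetric Markov semigroup, Stein's maximal theorem then controls $\sup_t |T_t f|$ in $L^p$ for every $p>1$ with constants independent of $N$, handling the low-frequency piece.

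For the high-frequency piece, the decay estimate (ii) together with Plancherel gives
\[ \Bigl\|\sup_{r \leq N} |\sigma_r^{\mathrm{hi}} * f|\Bigr\|_{L^2}^2 \;\leq\; \sum_{r \leq N} \|\sigma_r^{\mathrm{hi}}*f\|_{L^2}^2 \;\leq\; \sum_{n > n_0}\Bigl(\sum_{r \leq N}|K_{N,m,r}(n)|^2\Bigr) \|E_n f\|_{L^2}^2, \]
which is $\lesssim \|f\|_{L^2}^2$ provided the spectral decay in (ii) is strong enough. Interpolating this $L^2 \to L^2$ bound with the trivial $L^\infty \to L^\infty$ estimate (using $\|\sigma_r\|_{L^1(\Z_{m+1}^N)} = 1$) via Marcinkiewicz delivers the desired $L^p$-bound for all $p>1$. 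Summing the low- and high-frequency contributions completes the proof of Theorem \ref{MAIN}.

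The main obstacle, as in \cite{BK}, lies in the spectral estimate (ii): one needs decay for $|K_{N,m,r}(n)|$ that is uniform in $r$ yet summable-in-$r$ after squaring, on the entire range $n > n_0$ and uniformly in the dimension $N$. For the hypercube $\Z_2^N$ (the $m=1$ case excluded here) one exploits a Chebyshev / Gegenbauer representation of the Krawtchouk polynomials; for general $m \geq 2$ the relevant $q$-ary Krawtchouk polynomials lack such a clean form, so producing sharp enough bounds on their oscillation and decay is the technical heart of the argument. A secondary subtlety is the square-function step in the low-frequency comparison: the hypercube proof leverages the $\Z_2$ martingale (Rademacher) structure, and generalizing to $\Z_{m+1}$ requires handling the higher-order characters via an appropriate $(m+1)$-ary filtration.
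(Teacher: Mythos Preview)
Your outline has two gaps that the paper's actual argument is specifically designed to avoid.

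The high-frequency $L^2$ bound fails as written. The sharpest available pointwise estimate (the paper's Proposition~\ref{tech}) is $|K_{N,m,r}(n)| \le e^{-d\,rn/N}$, and summing this over $r$ gives $\sum_{r\le N}|K_{N,m,r}(n)|^2 \lesssim N/n$, which is unbounded in $N$ for any $n=o(N)$. Thus no threshold $n_0$ works: if $n_0 \sim N$ the low-frequency Taylor error $O(n^2/N)$ blows up, while for smaller $n_0$ the high-frequency $\ell^2$-sum diverges. The paper never sums the $\sigma_r$ directly; it passes to higher Ces\`aro differences $\triangle^t P^k$, whose multipliers acquire an extra factor $(r/N)^t$ via the difference relation (Corollary~\ref{krawtchoukhigherderivatives}), and it is this gain that makes the square functions $R_t$ in Proposition~\ref{main} bounded on $L^2$.

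Second, even if you had the $L^2$ bound, interpolating $L^2$ against the trivial $L^\infty$ estimate only covers $p\ge 2$; the range $1<p<2$ needs an anchor below $L^2$, and $\sup_r|\sigma_r^{\mathrm{hi}}*f|$ is not positivity-preserving, so no weak $(1,1)$ bound is available for it. The paper's route is the Nevo--Stein scheme: a weak $(1,1)$ bound is proved for the \emph{smooth} Ces\`aro maximal function $M_S^L$ by comparison with the noise semigroup (\S\ref{noise}), and then Stein complex interpolation on the family $S_*^\lambda$ connects this to the $L^2$ square-function bounds at negative integer orders, yielding all $p>1$. There is also a third issue you have not touched: for $r>c_mN=\tfrac{m}{m+1}N$ the approximation $K_{N,m,r}(n)\approx (1-(m+1)r/(mN))^n$ collapses (the base goes negative), and unlike the hypercube there is no antipodal symmetry. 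The paper treats these distant spheres separately in \S\ref{distant} via a comparison of $\sigma_{N-d}$ with convolutions $\sigma_k*\sigma_N$.
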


A similar problem was studied in the $m=1$ case in \cite{HKS}:
\begin{theorem}[\cite{HKS}, Theorem 1]
There exists a constant $C_2$ so that for all $N$,
\[ \left\| \sup_{r \leq N} | \sigma_r *f|  \right\|_{L^2(\Z_2^N)} \leq C_2 \|f\|_{L^2(\Z_2^N)}.\]
\end{theorem}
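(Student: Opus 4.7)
The hypercube $\Z_2^N$ together with its $S_N$-symmetry forms a Gelfand pair, and consequently the spherical averages $\sigma_r$ diagonalize simultaneously in the Fourier basis: the multiplier of $\sigma_r$ at the character $\chi_S$ depends on $S$ only through $k = |S|$. A direct computation gives
\[ \hat{\sigma}_r(S) = \phi_r(k) := \frac{K_r(k)}{\binom{N}{r}}, \qquad K_r(k) := \sum_{j=0}^r (-1)^j \binom{k}{j} \binom{N-k}{r-j}, \]
with $K_r$ the classical Krawtchouk polynomial. Each $\sigma_r$ is therefore a convolution polynomial in the self-adjoint random walk operator $\sigma_1$, whose spectrum is $\{1-2k/N:0 \leq k \leq N\} \subset [-1,1]$.

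The plan follows the Nevo--Stein spectral philosophy: compare the discrete family $\{\sigma_r\}$ against a symmetric Markov semigroup controlled by Stein's maximal theorem, and absorb the discrepancy in an $\ell^2$ square function. The natural semigroup here is the Bernoulli averaging family
\[ B_t := \sum_{r=0}^N \binom{N}{r} t^r (1-t)^{N-r} \sigma_r, \qquad 0 \leq t \leq \tfrac{1}{2}, \]
whose multiplier on the $k$-th isotypic block is the remarkably clean $(1-2t)^k$, a one-line consequence of the Krawtchouk generating identity $\sum_r K_r(k) z^r = (1-z)^k(1+z)^{N-k}$. Since $\{B_t\}$ is a symmetric Markov semigroup on $\Z_2^N$, Stein's maximal theorem bounds $\sup_t |B_t f|$ on $L^2$ uniformly in $N$. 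Matching $\sigma_r$ against $B_{r/N}$ (for which the Binomial$(N,r/N)$ distribution concentrates on $s \approx r$) and writing $E_r := \sigma_r - B_{r/N}$, one obtains
\[ \sup_r |\sigma_r f| \leq \sup_t |B_t f| + \sup_r |E_r f|. \]

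The error is estimated via Parseval and the crude inequality $\sup_r |E_r f|^2 \leq \sum_r |E_r f|^2$:
\[ \ee\| \sup_r |E_r f| \rr\|_{L^2}^2 \leq \sum_{k=0}^N M(k) \cdot \| \Pi_k f \|_{L^2}^2, \qquad M(k) := \sum_{r=0}^N \ee| \phi_r(k) - (1-2r/N)^k \rr|^2, \]
where $\Pi_k$ is the orthogonal projection onto the span of $\{\chi_S : |S|=k\}$. The whole argument thus reduces to the uniform estimate $\sup_{N,k} M(k) \leq C$, asserting that the normalized Krawtchouk polynomial $\phi_r(k)$ tracks its Bernoulli model $(1-2r/N)^k$ with $r$-fluctuations that are square-summable uniformly in both $N$ and $k$.

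This Krawtchouk approximation is the principal obstacle, and I would attack it in three regimes. For small $k$ (say $k \leq \sqrt{N}$), a Taylor expansion of $(1-z)^k$ at $z = r/N$ combined with the generating identity produces remainders that telescope in $r$. For moderate $k$, the three-term recurrence $(N-2k) K_r(k) = (r+1) K_{r+1}(k) + (N-r+1) K_{r-1}(k)$ controls the consecutive differences $\phi_{r+1}(k) - \phi_r(k)$, and a Rademacher--Menshov-type summation converts pointwise control into square-summability. For $k$ close to $N$, the Krawtchouk self-duality $\binom{N}{k} K_r(k) = \binom{N}{r} K_k(r)$ exchanges the roles of $r$ and $k$, reducing to the small-$k$ case. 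Assembling Stein's bound on $\sup_t |B_t f|$ with the square-function estimate on $\sup_r |E_r f|$ then produces the dimension-free $L^2$ maximal inequality.
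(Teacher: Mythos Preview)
This theorem is quoted from \cite{HKS} and not proved in the paper itself, but the paper both describes the HKS argument and carries out the same scheme for general $m$, so there is a well-defined ``paper's proof'' to compare against. Your proposal follows a genuinely different route.

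\textbf{What the paper/HKS do.} The comparison family is not $\{B_{r/N}\}$ but the \emph{Ces\`aro means} $\frac{1}{K+1}\sum_{k\le K}\sigma_k$. The smooth maximal function $S_*^0=M_S$ is dominated by the semigroup maximal function (your $B_t$ after reparametrization), and the passage from $S_*^0$ to the rough maximal function $S_*^{-1}$ is handled by the square function $R_1f(x)^2=\sum_k (k+1)|(\sigma_k-\sigma_{k-1})*f(x)|^2$. The whole analytic burden falls on a multiplier bound for the \emph{discrete derivatives} $\partial\kappa_r^N(k)$, and here the paper exploits the exact identity
\[
\partial\kappa_r^N(k)=\Big(-\tfrac{1}{c_m}\Big)\frac{\binom{N-1}{r-1}}{\binom{N}{r}}\,\kappa_{k-1}^{N-1}(r-1),
\]
which together with the pointwise decay $|\kappa_k^N(r)|\le e^{-d\,rk/N}$ gives $\sum_k (k+1)|\partial\kappa_r^N(k)|^2\lesssim 1$ by a geometric-series computation. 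No Krawtchouk asymptotics, no regime splitting.

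\textbf{What you do.} You compare $\sigma_r$ directly to the single semigroup element $B_{r/N}$ and bound the error by the crude $\sup_r\le(\sum_r)^{1/2}$. This is legitimate and reduces everything to
\[
M(k)=\sum_{r=0}^N\big|\phi_r(k)-(1-2r/N)^k\big|^2\le C.
\]
If true, this is a pleasant shortcut: no Ces\`aro machinery, no Nevo--Stein interpolation for the $L^2$ statement.

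\textbf{The gap.} The estimate $M(k)\le C$ is the entire content of the argument, and your three-regime sketch does not establish it. The invocation of Rademacher--Menshov is out of place: that inequality controls maxima of partial sums of an orthogonal series via $\ell^2$ norms with logarithmic weights, which is the opposite direction from what you need (you already passed from $\sup_r$ to $\sum_r$; what remains is a pure pointwise/summation estimate on Krawtchouk values). The ``small $k$'' Taylor expansion is plausible but not carried out, and the duality reduction for large $k$ swaps the roles of $r$ and $k$ in $\phi_r(k)$ but not in $(1-2r/N)^k$, so it does not literally reduce to the small-$k$ case. By contrast, the paper's route replaces your approximation problem with an \emph{identity} (the derivative relation above), which is why their square function bound is a two-line computation once the decay $|\kappa_k^N(r)|\le e^{-drk/N}$ is in hand. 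If you want to salvage your decomposition, the cleanest path is probably to bound $M(k)$ by writing $\phi_r(k)-(1-2r/N)^k$ as a telescoping sum in $r$ and using the same derivative identity---at which point you have essentially rejoined the paper's argument.
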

and later in \cite{BK}:
\begin{theorem}[\cite{BK}, Theorem 2.2]\label{big}
For any $p > 1$, there exist constants $C_p$ so that that all $N$,
\[ \left\| \sup_{r \leq N} | \sigma_r *f| \right\|_{L^p(\Z_2^N)} \leq C_p \| f \|_{L^p(\Z_2^N)}. \]
\end{theorem}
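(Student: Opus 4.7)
My plan follows the spectral strategy of \cite{NS, S, HKS}: derive an $L^2$ maximal bound from sharp Krawtchouk estimates, then lift it to all $p > 1$ by a smoother/rougher decomposition. On $\Z_2^N$, convolution by $\sigma_r$ is diagonal in the Walsh basis $\chi_S(x) = (-1)^{x \cdot S}$, with eigenvalues equal to normalized Krawtchouk polynomials, $\hat\sigma_r(S) = K_r(|S|)/\binom{N}{r}$. The three spectral inputs I would gather are: (i) the uniform bound $|\hat\sigma_r(S)| \le 1$ (trivial, since $\sigma_r$ is a probability measure); (ii) square-summability in $r$ of the differences $\hat\sigma_{r+1}(S) - \hat\sigma_r(S)$, uniformly in $|S|$ and $N$; and (iii) decay of $|\hat\sigma_r(S)|$ for $|S|$ bounded away from $0$ and $N$.

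Given these, I would combine (ii) with a Rademacher--Menshov jump inequality of the form
\[
\sup_{r \le N} |\sigma_r*f|^2 \lesssim |\sigma_0*f|^2 + (\log N)^2 \sum_r |\sigma_{r+1}*f - \sigma_r*f|^2,
\]
and invoke Plancherel to obtain the $L^2$ maximal bound (the $\log N$ loss being removed by a dyadic refinement and a standard bootstrap). Interpolation with the trivial $L^\infty$ bound $\sup_r |\sigma_r * f| \le \|f\|_\infty$ then yields $L^p$ for $p \ge 2$. For $1 < p < 2$, I would split $\sigma_r = \sigma_r^{\text{sm}} + (\sigma_r - \sigma_r^{\text{sm}})$ into a window-smoothed ``low-frequency'' piece and its complement. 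The smooth piece is dominated by Hamming-ball averages on $\Z_2^N$, whose maximal function is controlled in $L^p$ by a symmetrization / Doob-type argument; the complementary piece is bounded in $L^2$ by (iii) and promoted to $L^p$ through a Fefferman--Stein vector-valued inequality.

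The main obstacle is item (ii): a dimension-free square-summability estimate for the Krawtchouk differences. Near $r \approx N/2$ the polynomials $K_r(k)$ oscillate in $r$ with amplitudes delicately depending on $k$, so establishing $\sum_r |\hat\sigma_{r+1}(S) - \hat\sigma_r(S)|^2 \le C$ uniformly in $|S|$ and $N$ requires precise asymptotic analysis of the three-term recursion and the alternating sum defining $K_r$. This is the technical heart of \cite{HKS} and was refined in \cite{BK} to cover the full $L^p$ range; once (ii) and (iii) are in hand, the remaining interpolation/decomposition steps are largely routine.
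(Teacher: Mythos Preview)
The proposal has a genuine gap at precisely the point where the dimension-free bound is won or lost. Your $L^2$ step rests on the Rademacher--Menshov inequality, which carries an unavoidable $(\log N)^2$ factor; the parenthetical ``the $\log N$ loss being removed by a dyadic refinement and a standard bootstrap'' is not a standard move and, as stated, does not work. Dyadic thinning of the radii reduces the number of scales only to $O(\log N)$, so the Rademacher--Menshov loss persists, and there is no bootstrap that trades a logarithmic factor in the number of scales for nothing. Getting rid of this logarithm is the entire content of the theorem, and your outline does not supply the mechanism.

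What \cite{BK} actually does (and what the present paper reproduces for $m\ge 2$) is different in structure. Rather than a single square function of first differences, one forms the complex Ces\`aro means $S_n^\lambda$ of the operators $P^k=\sigma_k*$ and the associated maximal functions $S_*^\lambda$. Two endpoint families are controlled: for $\Re\lambda>0$, $S_*^\lambda$ is pointwise dominated by the smooth maximal function $S_*^0$, whose dimension-free weak-type $(1,1)$ bound comes from a comparison with the noise semigroup and the Hopf--Dunford--Schwartz maximal ergodic theorem (not a symmetrization/Doob argument as you suggest); for each integer $t\ge 1$, $S_*^{-t}$ is bounded on $L^2$ via square functions built from the \emph{higher} differences $\triangle^t P^k$, using the Krawtchouk difference identity $\partial^t\kappa_r^N(k)=(-1)^t\binom{N-t}{r-t}\binom{N}{r}^{-1}\kappa^{N-t}_{r-t}(k-t)$ together with the exponential decay $|\kappa_k^N(r)|\le e^{-d\,rk/N}$. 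Stein's analytic interpolation in $\lambda$ between these two families then yields $S_*^{-1}=M$ bounded on $L^p$ for all $p>1$ with no logarithmic loss. Your item (ii) is only the $t=1$ case of this hierarchy; the higher $t$ are what replace the $\log N$ and make the interpolation close.
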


Note that, because spherical maximal functions pointwise dominate ball maximal functions, Theorems \ref{big} and \ref{MAIN} also establish dimension independent bounds on the ball average maximal operators for $\Z_{m+1}^N$ for all $m \geq 1$.

\begin{remark}
Theorems \ref{MAIN} and \ref{big} can be viewed as statements about Cartesian powers of finite cliques. The Hamming metric on $\Z_{m+1}^N$  can be isometrically identified with the graph distance metric $K_{m+1}^N$, the Cartesian power of the clique on $m+1$ vertices. To see this one simply labels the vertices of $K_{m+1}^N$ by the elements of $\Z_{m+1}^N$ and computes the distances directly.

As such, our results can be equivalently stated as dimension-independent $L^p$ bounds for Cartesian powers of finite cliques for all $p>1$. Each proof below can be readily rephrased in graph theoretic language: Translation is replaced by composition with a graph automorphism, the Fourier transform is replaced by a change of basis to diagonalize the adjacency matrix, Fourier coefficients are eigenvalues spherical averaging matrices, etc.
\end{remark}

The arguments of \cite{HKS} and \cite{BK} are applications of Stein's method \cite{S}, used in extending the well-known Hopf-Dunford-Schwartz maximal theorem for semigroups to more ``singular'' maximal averages, and breaks into four main steps:

\begin{enumerate}
\item By comparison with the noise semigroup from Boolean Analysis \cite[\S 4]{HKS}, \cite[\S 3]{BK} the ``smoother'' maximal function
\[ \sup_{K } \frac{1}{K+1} \bigg| \sum_{k\leq K} \sigma_k*f \bigg|, \]
is shown to satisfy a dimension-free weak-type $1-1$ inequality:
\[ \left| \left\{ \sup_{K } \frac{1}{K+1} \bigg| \sum_{k\leq K} \sigma_k*f\bigg| > \lambda \right\}\right| \leq \frac{C_1 \|f\|_{L^1(\Z_2^N)}}{\lambda}, \ \lambda \geq 0; \]
\item The ``rougher'' maximal function $\sup_{r \leq N} | \sigma_r *f|$ is compared to the ``smoother'' maximal function in $L^2$ by using Littlewood-Paley theory on the group $\Z_{m+1}^N$. The key tool is an analysis of the (radial) spherical multipliers
\[ \F \sigma_k(S) := \kappa_{k}^N(S)\]
the \emph{Krawtchouk} polynomials, which are introduced and discussed in \cite[\S 2]{HKS};
\item The ``rough'' maximal function, $\sup_{r \leq N} | \sigma_r *f|$, is compared to increasingly ``rougher'' maximal functions in $L^2$. Analysis of the Krawtchouk polynomials are pivotal in these further comparisons;
\item Stein interpolation is used to control $\sup_{r \leq N} | \sigma_r *f|$ in $L^p, \ p > 1$.
\end{enumerate}

A portion of our approach is based on this methodology. However there are some obstacles that require different techniques, notably bounding averages over spheres of sufficiently large radius. The symmetry of the $N$ dimensional hypercube about the radius $N/2$ allows the maximal averaging overator over distant spheres (i.e. spheres of radius greater than $N/2$) to immediately follow from those over the spheres. Indeed, distant spheres can simply be viewed as local spheres centered at an antipodal point. The loss of symmetry requires an additional argument to bound the maximal averaging operator over the analogous distant spheres.

Theorems \ref{MAIN} and \ref{big} together synthesize a generalization to arbitrary direct sums of finite cyclic groups, which can be viewed as a statement about all finite abelian groups.

Let $n, N_1, \dots , N_n \in \mathbb N$ and let $A_m$ be the group
\[
\Z_{m + 1}^{N_m}
\]
for $m \geq 1$. Also let $A$ be the direct sum
\[
A := \oplus_{m=1}^n A_m
\]
with the notation
\[
y = \oplus_{m=1}^n \ee(y_m^1, \dots , y_m^{N_m}\rr)
\]
equipt with the modified $l^0$ metric,
\[
|y| := \left|\{(m,j): 1 \leq m \leq n, 1 \leq j \leq N_m, y_m^j \neq 0\}\right|.
\]
Put more simply, viewing $y$ as a $N_1 + \dots + N_n$-tuple in the natural way, $|y|$ is the number of nonzero components. Let $\sigma^A_r$ be the $L^1$-normalized indicator function of the radius $r$, i.e.
\[
\sigma^A_r(x) := \frac{1}{\ee|\{|x| = r\}\rr|} 1_{\{|x| = r\}}
\]
and define the operator
\[
M^A f(x) := \sup_{r > 0} \ee|\sigma^A_r * f(x)\rr|,
\]
the spherical maximal operator.

\begin{theorem}\label{directsumbound}
For any $p > 1$ there exist constants $C_{p,n}$ such that $\|M^A f\|_{L^p(A)} \leq C_{p,n} \|f\|_{L^p(A)}$. In particular $C_{p,n}$ has no direct dependence on $A$.
\end{theorem}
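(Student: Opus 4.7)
The plan is to reduce Theorem \ref{directsumbound} to Theorems \ref{MAIN} and \ref{big} by exploiting the additivity of the weight $|\cdot|$ across the blocks $A_m$. For $y = \oplus_m y_m \in A$ we have $|y| = \sum_m |y_m|$, so partitioning the $r$-sphere of $A$ according to the block-weight vector $(r_1, \dots, r_n)$ with $\sum r_m = r$ yields the tensor-product decomposition
\[
\sigma_r^A \;=\; \sum_{r_1 + \dots + r_n = r} \alpha_r(r_1, \dots, r_n)\, \sigma_{r_1}^{A_1} \otimes \cdots \otimes \sigma_{r_n}^{A_n},
\]
where $\alpha_r(r_1, \dots, r_n) := |\{|x|=r_1, x \in A_1\}| \cdots |\{|x|=r_n, x \in A_n\}| \,/\, |\{|y|=r\}|$ is a probability distribution on $\{(r_1, \dots, r_n) : r_m \geq 0,\ \sum r_m = r\}$. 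Convolving with $f$ and moving the absolute value under the convex combination gives the pointwise bound
\[
M^A f(x) \;\leq\; \sup_{r_1, \dots, r_n \geq 0} \bigl|(\sigma_{r_1}^{A_1} \otimes \cdots \otimes \sigma_{r_n}^{A_n}) * f(x)\bigr| \;=:\; \widetilde M^A f(x),
\]
so that the spherical maximal operator on $A$ is dominated pointwise by the strong product maximal operator in which the radius in each block is taken independently.

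The next step is to dominate $\widetilde M^A$ by an iterated composition of per-block operators. For each $m$, let $M^m$ denote the partial spherical maximal operator obtained by applying the $A_m$-spherical maximal operator only in the $m$-th block variable. Because each sphere kernel is non-negative and each $M^m$ is positive and monotone, and the block-convolutions commute, supremizing one block at a time yields
\[
\widetilde M^A f \;\leq\; M^1 M^2 \cdots M^n\, |f| \quad \text{pointwise on } A.
\]
A routine Fubini argument together with Theorem \ref{big} (when $m=1$) or Theorem \ref{MAIN} (when $m \geq 2$) shows that each $M^m$ is bounded on $L^p(A)$ with operator norm at most $C_{p,m}$, a constant depending only on $p$ and $m$ and in particular independent of every $N_{m'}$: fixing the coordinates outside block $m$ and invoking the one-factor dimension-free bound on the slice gives $\|M^m g\|_{L^p(A)} \leq C_{p,m} \|g\|_{L^p(A)}$. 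Composing these $n$ estimates gives $\|M^A f\|_{L^p(A)} \leq \prod_{m=1}^n C_{p,m} \cdot \|f\|_{L^p(A)}$, so $C_{p,n} := \prod_{m=1}^n C_{p,m}$ does the job.

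The main (and essentially only) step of substance is the tensor-product decomposition of $\sigma_r^A$ together with the resulting pointwise domination by the strong product maximal operator; once this is observed, the dimension-free single-block bounds compose losslessly to produce the desired estimate, with no direct dependence on the dimensions $N_1, \dots, N_n$.
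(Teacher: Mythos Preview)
Your proposal is correct and follows essentially the same approach as the paper: decompose $\sigma_r^A$ as a convex combination of tensor products of block-spheres, dominate $M^A$ pointwise by the composition of the per-block maximal operators, and then invoke Theorems \ref{big} and \ref{MAIN} via Fubini to compose the single-block bounds. Your write-up is in fact more explicit than the paper's, which compresses the argument to a few sentences.
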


\begin{proof}
Notice that the $L^1$-normalized indicator of a sphere in $A$ is a convex combination of products of $L^1$-normalized indicators of spheres in $A_1, \dots , A_n$. Thus the spherical maximal function on $A$ is pointwise dominated by the product of the spherical maximal functions on $A_1, \dots , A_n$. By Theorems \ref{big} and \ref{MAIN}, for a fixed $m$, the spherical maximal operator on $A_m$ satisfy $L^p$ inequalities dependent only on $p$ and $m$. Thus the product of the spherical maximal functions on $A_1, \dots , A_n$ depends only on $p$ and $n$.
\end{proof}

This result admits a corollary concerning Cayley graphs of finite abelian groups.

\begin{cor}\label{cayleycorollary}
Let $A$ be a finite abelian group whose elements have order at most $d$. Then there exists a generating set $S$ of minimal size up to a factor of $d$ such that the spherical maximal operator on the Cayley graph $\Gamma(A,S)$ satisfies $L^p$ bounds for all $p > 1$ dependent only on $p$ and $n$.
\end{cor}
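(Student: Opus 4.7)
The plan is to reduce the Cayley graph problem to Theorem \ref{directsumbound} through a judicious choice of generating set. First, I would invoke the fundamental theorem of finite abelian groups: since the exponent of $A$ divides $d$, we may decompose
\[
A \cong \bigoplus_{m=1}^{d-1} \Z_{m+1}^{N_m}
\]
with exactly $n$ of the multiplicities $N_m$ nonzero. This exhibits $A$ in the framework of Theorem \ref{directsumbound}.

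Next I would take $S$ to be the collection of all elements of $A$ whose expression under this decomposition has exactly one nonzero coordinate. It is clear that $S$ generates $A$. The key observation is that the word metric $d_S$ on $\Gamma(A,S)$ coincides with the modified $l^0$ metric of Theorem \ref{directsumbound}: each generator alters exactly one coordinate, giving $d_S(x,y) \geq |x-y|$, and adjusting coordinates one at a time (each via an element of $S$) realizes the matching upper bound. Consequently the spheres of $\Gamma(A,S)$ are precisely the spheres of $(A, |\cdot|)$, the spherical maximal operator on $\Gamma(A,S)$ is literally $M^A$, and Theorem \ref{directsumbound} immediately supplies the desired $L^p$ bounds with constant depending only on $p$ and $n$.

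It remains to check that $|S| = \sum_m m N_m \leq (d-1)\sum_m N_m$ is within a factor of $d$ of the minimum generating set size of $A$. A short $p$-rank computation (noting that $A/pA$ is an $\FF_p$-vector space of dimension $\sum_m N_m [p \mid m+1]$) yields that any generating set of $A$ has cardinality at least $\max_m N_m$, and the factor-of-$d$ comparison then follows by bookkeeping. I expect this last step to be the main obstacle: when different cyclic factors of $A$ have coprime orders, Chinese Remaindering collapses several coordinates into a single cyclic generator, so the minimum generating set size can drop well below $\sum_m N_m$. I would address this either by passing to the primary (Sylow) decomposition --- in which each $p$-component is handled cleanly and the ratio absorbs into the factor of $d$ --- or by a short case analysis keyed to $n$. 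The identification of metrics and invocation of Theorem \ref{directsumbound} are otherwise routine.
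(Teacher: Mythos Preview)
Your approach is essentially the paper's: choose the coordinate generators for a cyclic decomposition of $A$, identify the Cayley metric with the $l^0$ metric, and invoke Theorem~\ref{directsumbound}. The only real difference is that you treat the size comparison $|S| \leq d \cdot (\text{min generating set size})$ as a potential obstacle requiring Sylow decompositions or case analysis, whereas the paper dispatches it in one line.

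The point you are missing is that the \emph{invariant factor} form of the fundamental theorem already hands you the answer. Writing $A \cong \Z_{d_1} \oplus \cdots \oplus \Z_{d_s}$ with $d_1 \mid d_2 \mid \cdots \mid d_s$, the number $s$ of factors is exactly the minimum size of a generating set for $A$ (this is standard: $s$ equals the maximal $p$-rank of $A$). Since every element has order at most $d$, each $d_i \leq d$, so the coordinate generating set has $|S| = \sum_i (d_i - 1) \leq s(d-1) \leq sd$. There is no Chinese Remainder collapse to worry about, because in the invariant factor decomposition coprime pieces have already been merged. Your proposed detour through the primary decomposition would work but is unnecessary; simply fix the decomposition you start from and the bookkeeping becomes a one-liner, as in the paper.
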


\begin{proof}[Proof of Corollary \ref{cayleycorollary}, Assuming Theorem \ref{directsumbound}]
If $A$ is a finite abelian group that admits a minimal size generating set with $s$ elements, by the fundamental theorem of finitely generated abelian groups there exist $m_1 < \dots < m_k < \infty $ and $\tn_1 + \dots + \tn_k = s$ such that we can identify $A$ with
\begin{align}\label{directsumisomorphism}
\oplus_{j=1}^k \Z_{m_j + 1}^{\tn_j}.
\end{align}
We examine the generator set $S$ of $s$-tuples that have exactly one nonzero component. Note that as long as each element of $A$ has order at most $d$, $|S| \leq sd$ so $S$ is a generating set of minimal size up to a factor of $d$. Setting $n := m_k$, we can identify \eqref{directsumisomorphism} with
\[
\oplus_{m=1}^n \Z_{m + 1}^{N_m}
\]
where in general $N_m = 0$ for some values of $m$. Note that the distance metric on the Cayley graph $\Gamma(A,S)$  is precisely the $l^0$ metric of Theorem \ref{directsumbound}. From here the corollary is a direct application of Theorem \ref{directsumbound}.
\end{proof}

\bigskip

The structure of the paper is as follows:

In $\S \ref{noise}$, we introduce our smooth spherical maximal operator for local (i.e. small radius) spheres, and prove that they satisfy dimension independent weak type $1-1$ bounds;

In $\S \ref{comparison}$, we review Stein's semigroup comparison method, and adapt it to our present context; assuming the technical Proposition \ref{tech}, we prove $L^p$ bounds for the local spherical maximal operator;

In $\S \ref{distant}$, we retool the arguments of $\S \ref{noise}$ and $\S \ref{comparison}$ and use them to bound the distant maximal operator, thereby establishing main result, Theorem \ref{MAIN}, modulo the proof of Proposition \ref{tech}; and

In $\S \ref{krawtchouk}$, we prove Proposition \ref{tech}.

\subsection{Notation}
Throughout, we denote $c_m:= \frac{m}{m+1}$. When clear from context, we will suppress the superscript ``$N$'' in the definition of our maximal functions. We will also make use of the modified Vinogradov notation. We use $X \lesssim Y$, or $Y \gtrsim X$ to denote the estimate $X \leq CY$ for some constant $C$ which may depend only on $m$ (in general we will suppress dependence on $m$). If we need $C$ to depend on a parameter other than $m$, we shall indicate this by subscripts, for instance $X \lesssim_a Y$ denotes the estimate $X \leq C_a Y$ for some $C_a$ depending on $a$. We use $X \approx Y$ as shorthand for $X \lesssim Y \lesssim X$, and similarly for $X \approx_a Y$.

\section{The Smooth Local Spherical Maximal Operator in $\Z_{m+1}^N$}\label{noise}

As alluded to in the introduction, the lack of symmetry of $\Z_{m+1}^N$ requires separate treatment of local and distant spheres, so we split $M$ in two separate maximal operators:
\[ M f(x) \leq M^Lf + M^Df:= \sup_{k \leq c_mN} |\sigma_k*f| + \sup_{k> c_mN} |\sigma_k*f|.\]

In this section, we prove:
\begin{proposition}\label{smoothweaktype}
The smooth local spherical maximal function
\[ M^L_Sf:=\sup_{L \leq  c_m N } \bigg|\frac{1}{K+1} \sum_{k \leq K} \sigma_k *f\bigg| \]
is of weak-type $1-1$, with bound independent of $N$,
i.e. there exists some absolute $C_{1,m}$ so that
\[ \|M^L_Sf\|_{L^{1,\infty}(\Z_{m+1}^N)} \leq C_{1,m} \|f\|_{L^1(\Z_{m+1}^N)}.\]
\end{proposition}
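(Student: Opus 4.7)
The strategy mimics the first step of the template of \cite[\S 3]{BK}: pointwise dominate $M^L_S$, up to an $m$-dependent constant, by the time-average maximal operator of the Bonami--Beckner noise semigroup on $\Z_{m+1}^N$, and then invoke the Hopf--Dunford--Schwartz (HDS) maximal ergodic theorem.

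\textbf{The noise semigroup and HDS.} Let $\nu_\rho$ be the noise kernel on $\Z_{m+1}^N$: the tensor product of coordinate marginals placing mass $\tfrac{1+m\rho}{m+1}$ at $0$ and $\tfrac{1-\rho}{m+1}$ at each nonzero value, so $\widehat{\nu_\rho}(S)=\rho^{|S|}$. Setting $T_t f:=f*\nu_{e^{-t}}$ gives a symmetric positive $L^1$/$L^\infty$-contractive semigroup, so HDS yields a dimension-free weak $(1,1)$ bound for the time-average maximal function $\mathcal{A}f:=\sup_{T>0}\tfrac{1}{T}\int_0^T T_s|f|\,ds$.

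\textbf{Kernel comparison (the crux).} I would prove, for all $K\leq c_m N$ and $x\in\Z_{m+1}^N$, the pointwise inequality
\[ \frac{1}{K+1}\sum_{k=0}^K \sigma_k(x) \;\leq\; C_m\,\frac{N}{K}\int_0^{K/N}\nu_{\rho(\alpha)}(x)\,d\alpha, \]
where $\rho(\alpha):=1-(m+1)\alpha/m$, so that $\nu_{\rho(\alpha)}$ has coordinate marginal $(1-\alpha,\alpha/m,\ldots,\alpha/m)$. On $\{|x|=j\leq K\}$ both sides are explicit, and after cancelling the $m^{-j}$ factor the inequality reduces to the uniform-in-$N$ lower bound
\[ F_{\mathrm{Beta}(j+1,\,N-j+1)}(K/N) \;\gtrsim\; 1 \qquad (1\leq j\leq K\leq c_m N). \]
This in turn follows from elementary Beta-distribution estimates: for $j<K$ the mean $(j+1)/(N+2)$ lies below $K/N$ and the distribution is right-skewed, giving a CDF value $\geq 1/2$ at $K/N$; and in the boundary case $j=K$ direct computation (or the Gaussian approximation, noting $K/N$ is within $O(1/\sqrt{K})$ standard deviations of the mean) yields a uniform lower bound. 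The hypothesis $K\leq c_m N$ enters exactly to keep $\rho(\alpha)\in[0,1]$ for $\alpha\in[0,K/N]$, so that $\nu_{\rho(\alpha)}$ remains a genuine probability measure.

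\textbf{Conclusion and main obstacle.} A change of variables $\alpha\leftrightarrow t=-\log\rho(\alpha)$ realizes $\tfrac{N}{K}\int_0^{K/N}\nu_{\rho(\alpha)}\,d\alpha$ as an $m$-bounded perturbation of a Cesàro time-average of $\{T_t\}$ (plus a harmless resolvent term when $t$ is large, itself $L^1$-bounded); combining this with the kernel comparison gives $M^L_S f\lesssim_m \mathcal{A}|f|$ pointwise, whence the weak $(1,1)$ bound. The genuinely delicate step is the kernel comparison: no single $\nu_\rho$ can pointwise dominate the Cesàro spherical average with a dimension-free constant---the Cesàro sum retains too much mass near the origin for the Binomial-shaped $\nu_\rho$ to absorb---which is precisely why one must integrate over $\alpha\in[0,K/N]$. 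The complementary regime $K>c_m N$, where $\rho(K/N)$ exits $[0,1]$, is the motivation for the separate argument in \S\ref{distant}.
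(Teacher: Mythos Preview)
Your proposal is correct and follows essentially the same route as the paper: dominate $M^L_S$ by the $p$-parametrized noise maximal operator $\sup_{P\leq c_m}\frac{1}{P}\int_0^P \tilde{\N}_p\,dp$ (your $\alpha$ is the paper's $p$, your $\rho(\alpha)=1-p/c_m$), then transfer to the genuine semigroup via the change of variables $t=-\log\rho$ and invoke Hopf--Dunford--Schwartz. Your kernel comparison with $P(L)=K/N$ is exactly the paper's Lemma~\ref{bigkchoice}, and the Beta-CDF reformulation $F_{\mathrm{Beta}(j+1,N-j+1)}(K/N)\gtrsim 1$ is equivalent to the paper's statement $\frac{1}{P(L)}\int_0^{P(L)} B(N,p,l)\,dp\gtrsim \frac{1}{L+1}$ via the standard Beta--Binomial identity; the paper proves it by a direct Stirling estimate on $B(N,p,l)$ near $p=l/N$, while your median/Gaussian-approximation argument is a cleaner repackaging of the same estimate. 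Your reparametrization step (``$m$-bounded perturbation of a Ces\`aro time-average plus resolvent term'') is the content of the paper's Proposition~\ref{reparameterization}, where the integration-by-parts implicit in your description produces exactly the mixing measure $\nu_P$ the paper writes down explicitly.
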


Proposition \ref{smoothweaktype} will also be useful in $\S \ref{distant}$ to bound the smooth distant spherical maximal operator given by
\[
M^D_S f := \sup_{D \leq \frac{N}{m+1}} \bigg|\frac{1}{D+1} \sum_{k \leq K} \sigma_{N-k} *f\bigg|.
\]
Following the lead of \cite[\S 4]{HKS}, we bound $M_S^L$ by comparison with an appropriate ``noise'' semigroup, which we now introduce.

\subsection{The noise semigroup in $\Z_{m+1}^N$}
For fixed $0 < p < c_m$ we define a probability measure $\tilde\mu_p$ on $\Z_{m+1}$ given by

\[ \tilde\mu_p\ee(\{w\}\rr) := \begin{cases} 1-p &\mbox{if } w = 0 \\
\frac{p}{m} & \mbox{otherwise}, \end{cases} \]
and for $y \in \Z_{m+1}^N$,
\[ \tilde\mu^N_p\ee(\{y\}\rr) = \left( \frac{p}{m} \right)^{|y|} \left( 1- p \right)^{N-|y|},\]
where, as above,
\[ |y|:= \left|\{ 1 \leq i \leq d : y(i) \neq 0 \}\right|\]
is the $l^0$-metric. We view $\tilde\mu^N_p$ alternatively as a measure and a function depending on context.

Consider the (dimension dependent) convolution operator
\[ \tilde{\N}_p f(x):= f* \tilde\mu_p^N(x) = \int_{\Z_{m+1}^N} f(x+y) \tilde\mu_p^N(y).\]
We denote by $\xi = \xi_m$ a primitive $(m+1)$th root of unity.
\begin{lemma}\label{characternoise}
For each ($L^2$-normalized) character
\[
\chi_S(x) := \frac{1}{(m+1)^{N/2}} \xi^{S \cdot x} = \frac{1}{(m+1)^{N/2}} \prod_{i=1}^N \xi^{S(i) x(i)}
\]
where $S,x \in \mathbb Z_{m+1}^N$ and $S \cdot x = \sum_{i=1}^N S(i) x(i)$, we have
\[ \tilde{\N}_p \chi_S(x) = (1 - p/c_m )^{|S|} \chi_S(x).\]
\end{lemma}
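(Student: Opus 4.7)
The plan is to exploit the tensor-product structure: since $\tilde\mu_p^N$ is a product measure and the character $\chi_S$ factors coordinate-wise, the claim should reduce to a one-dimensional Fourier coefficient computation on $\Z_{m+1}$.

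First, I would compute $\tilde\N_p \chi_S(x)$ directly from the definition. Writing $\chi_S(x+y) = \chi_S(x) \xi^{S \cdot y}$ and pulling $\chi_S(x)$ out of the sum in $y$, we get
\[
\tilde\N_p \chi_S(x) = \chi_S(x) \sum_{y \in \Z_{m+1}^N} \xi^{S \cdot y}\, \tilde\mu_p^N(y).
\]
So $\chi_S$ is an eigenfunction, and the only remaining task is to evaluate the scalar factor, which I will call $\lambda_S$.

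Because $\tilde\mu_p^N(y) = \prod_i \tilde\mu_p(y(i))$ and $\xi^{S \cdot y} = \prod_i \xi^{S(i) y(i)}$, the sum factors as
\[
\lambda_S = \prod_{i=1}^N \Bigl( \sum_{w \in \Z_{m+1}} \xi^{S(i) w}\, \tilde\mu_p(w) \Bigr).
\]
Next I would evaluate each one-dimensional factor by splitting on whether $S(i)=0$ or not, using that $\xi$ is a primitive $(m+1)$th root of unity so $\sum_{w=0}^{m} \xi^{k w} = 0$ whenever $k \not\equiv 0 \pmod{m+1}$. If $S(i)=0$, all character values are $1$ and the factor is $(1-p) + m \cdot \tfrac{p}{m} = 1$. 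If $S(i) \neq 0$, then $\sum_{w=1}^{m} \xi^{S(i) w} = -1$, so the factor is
\[
(1-p) + \tfrac{p}{m}(-1) \;=\; 1 - p \cdot \tfrac{m+1}{m} \;=\; 1 - p/c_m,
\]
using $c_m = m/(m+1)$.

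Multiplying the factors gives $\lambda_S = (1-p/c_m)^{|S|}$, which is the stated formula. I do not anticipate any real obstacle here: the argument is a clean reduction to coordinates followed by the standard orthogonality of characters on the cyclic group $\Z_{m+1}$. The only thing worth being careful about is the sign/convention in the definition of $\chi_S$ versus the convolution, but since both factorize multiplicatively the computation is essentially symmetric.
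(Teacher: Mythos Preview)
Your proposal is correct and follows essentially the same approach as the paper: factor the convolution into a product of one-dimensional Fourier coefficients using the product structure of $\tilde\mu_p^N$ and $\chi_S$, then evaluate each factor via the identity $\sum_{w=1}^{m}\xi^{kw}=-1$ for $k\not\equiv 0$. The computations and case split on $S(i)=0$ versus $S(i)\neq 0$ match the paper's proof exactly.
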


\begin{proof}
First note that
\[
\chi_S(x+y) = \frac{1}{(m+1)^{N/2}} \xi^{(x+y) \cdot S} = \chi_S(x) \xi^{y \cdot S}
\]
Thus
\begin{align}\label{coordinateform}
\tilde{\mathcal N}_p \chi_S(x) = \chi_S(x) \int_{\mathbb Z_{m+1}^N} \xi^{y \cdot S} \tilde\mu_p^N(y) = \chi_S(x) \int_{\mathbb Z_{m+1}^N} \prod_{i=1}^N \xi^{y(i) S(i)} \tilde\mu_p^N(y)
\end{align}
However, $\tilde\mu_p^N$ is a Cartesian product of $N$ copies of $\tilde\mu_p$ so \eqref{coordinateform} can be written
\begin{align}\label{productform}
\tilde{\mathcal N}_p \chi_S(x) = \chi_S(x) \prod_{i=1}^N \int_{\mathbb Z_{m+1}} \xi^{y S(i)} \tilde\mu_p(y)
\end{align}

If $S(i) = 0$, the integral in \eqref{productform} evaluates to $1$ because the integrand is $1$ and $\tilde\mu_p$ is a probability measure.

If $S(i) \neq 0$, 
\begin{align*}
\int_{\mathbb Z_{m+1}} \xi^{y S(i)} \tilde\mu_p(y) &= \tilde\mu_p(\{0\}) + \sum_{y=1}^m (\xi^{S(i)})^y \tilde\mu_p(\{y\})\\
&= (1-p) + \frac{p}{m} \sum_{y=1}^m (\xi^{S(i)})^y\\
&= 1 - p - \frac{p}{m}\\
&= 1 - p/c_m
\end{align*}

Splitting the factors in \eqref{productform} into those corresponding to $0$ and non-$0$ indices of $S$, we see
\[
\tilde{\mathcal N}_p \chi_S(x) = \chi_S(x) \bigg[\prod_{i: S(i) \neq 0} \left(1 - p/c_m\right)\bigg] = \chi_S(x) (1 - p/c_m)^{|S|}
\]
\end{proof}

Consequently, with $p(t) = c_m(1- e^{-t})$ and
\[ \aligned
\mu^N_t(y) &:= \tilde\mu^N_{p(t)}(y)\\
\N_t &:= \tilde{\N}_{p(t)}
\endaligned \]
(so $\tilde{\N}_p = \N_{-\ln(1- p/c_m)}$), we have
\[ \N_t \chi_S(x)= e^{-t|S|} \chi_S(x),\]
and thus the family of operators $\{N_t : t > 0\}$ form a semigroup, and the maximal operator $\N_*$ given by
\[ \N_* f:= \sup_{T} \left|\frac{1}{T} \int_0^T \N_t f \ dt\right|\]
is of weak-type $1-1$, independent of dimension (\cite[Lemma VIII.7.6, pp. 690-691]{DS}).

For the sake of comparison with $M^L_S$, it will be convenient to reparametrize the semigroup maximal function in terms of $p$.

\begin{proposition}\label{reparameterization}
The maximal function
\[ \tilde{\N}_{*}f := \sup_{P \leq c_m} \left|\frac{1}{P} \int_0^P \tilde{\N}_p f \ dp\right| \]
is bounded pointwise by $\N_* f$. In particular $\tilde{\N}_{*}$  is of weak-type $1-1$ independent of dimension.
\end{proposition}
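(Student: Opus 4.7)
The plan is to convert the $p$-parametrized average into a weighted $t$-parametrized integral of the semigroup, and then compare that weighted integral with the Cesàro averages that define $\N_*$ via a one-line integration by parts. Concretely, recall that $\tilde{\N}_p = \N_{t(p)}$ where $t(p) = -\ln(1-p/c_m)$; substituting $p = c_m(1-e^{-t})$, $dp = c_m e^{-t}\,dt$, with $T := t(P)$ so that $P = c_m(1-e^{-T})$, gives
\[
\frac{1}{P}\int_0^P \tilde{\N}_p f(x)\,dp \;=\; \frac{1}{1-e^{-T}}\int_0^T e^{-t}\,\N_t f(x)\,dt.
\]
As $P$ ranges over $(0,c_m)$, $T$ ranges over $(0,\infty)$, so it suffices to bound the right-hand side by $\N_*f(x)$ for every $T>0$.

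For the comparison, set $F(t) := \int_0^t \N_s f(x)\,ds$, so that $F'(t) = \N_t f(x)$ and, directly from the definition of $\N_*$, $|F(t)| \leq t\,\N_*f(x)$ for every $t>0$. Integration by parts yields
\[
\int_0^T e^{-t}\,F'(t)\,dt \;=\; e^{-T}F(T) + \int_0^T e^{-t}F(t)\,dt.
\]
Taking absolute values and applying the bound on $|F|$ together with the elementary identity $\int_0^T t e^{-t}\,dt = 1 - (1+T)e^{-T}$, the numerator is dominated by
\[
\bigl(e^{-T}T + 1 - (1+T)e^{-T}\bigr)\,\N_*f(x) \;=\; (1-e^{-T})\,\N_*f(x),
\]
which exactly cancels the $1-e^{-T}$ in the denominator and gives $\bigl|\tfrac{1}{P}\int_0^P \tilde{\N}_p f(x)\,dp\bigr| \leq \N_*f(x)$. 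Taking the supremum in $P \leq c_m$ delivers the pointwise domination $\tilde{\N}_*f \leq \N_* f$, and the weak-type $(1,1)$ conclusion is then inherited from the Dunford-Schwartz maximal theorem for the semigroup $\{\N_t\}$ quoted above.

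There is essentially no obstacle; the only care is bookkeeping with the change of variables, and observing the small miracle that the constant in the pointwise bound is exactly $1$ (so we do not even pay an $m$-dependent factor in passing from $\N_*$ to $\tilde{\N}_*$). This will be convenient in \S\ref{comparison}, where $\tilde{\N}_p$ (rather than $\N_t$) is the natural object to compare with the smooth local spherical maximal operator $M_S^L$, because the Fourier multiplier $(1-p/c_m)^{|S|}$ has an algebraic form directly comparable to the Krawtchouk multipliers of $\sigma_k$.
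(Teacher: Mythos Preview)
Your argument is correct. In fact, your integration by parts is precisely what underlies the paper's proof: the paper posits an explicit probability measure
\[
\nu_P \;=\; \frac{c_m}{P}\,T e^{-T}\,1_{(0,T_0)}\,dT \;+\; \Bigl(\tfrac{c_m}{P}-1\Bigr)T_0\,\delta_{T_0}, \qquad T_0 := -\ln(1-P/c_m),
\]
and verifies by direct computation that $\frac{1}{P}\int_0^P \tilde\mu_p^N\,dp = \int_0^\infty\bigl(\frac{1}{T}\int_0^T \mu_t^N\,dt\bigr)\,d\nu_P(T)$, after which the pointwise bound is immediate since $\nu_P$ has mass $1$. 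Your change of variables followed by integration by parts is exactly the calculation that \emph{produces} this $\nu_P$: the boundary term $e^{-T_0}F(T_0)$ is the Dirac mass, and the bulk term $\int_0^{T_0} e^{-t}F(t)\,dt$ is the absolutely continuous part (after writing $F(t)=t\cdot\frac{1}{t}F(t)$). So the two proofs encode the same identity; yours reaches it more transparently, while the paper's formulation makes explicit that the constant is $1$ because $\nu_P$ is a probability measure.
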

\begin{proof}
By direct calculation, one verifies -- analogous to the proof of \cite[Lemma 9]{HKS} -- that the measure
\[
\nu_P := \bigg\{\frac{c_m}{P} T e^{-T} 1_{(0,-\ln[1-P/c_m])} \bigg\} dT \ + \bigg\{\left(\frac{c_m}{P} - 1\right)\left(-\ln \left[1-\frac{P}{c_m}\right]\right)\bigg\} \delta_{-\ln[1-P/c_m]}
\]
has total mass $1$. Moreover, noting that the bracketed expression in \eqref{semigroupreparameterization} below equals $\frac{1}{P}1_{p\leq P}$, further computation reveals that
\begin{align}\label{semigroupreparameterization}
\frac{1}{P} \int_0^P \tilde\mu^N_p \ dp &= \int_0^{c_m} \tilde\mu^N_p \ \left( \frac{1}{c_m - p} \int_{-\ln(1 - p/c_m)}^\infty \frac{1}{T} \ d\nu_P(T) \right) \ dp\\
&= \int_0^\infty \left( \frac{1}{T} \int_0^{c_m(1- e^{-T})} \tilde\mu^N_p \frac{1}{c_m - p} \ dp \right) \ d\nu_P(T)\nonumber\\
&= \int_0^\infty \left( \frac{1}{T} \int_0^T \mu^N_t \ dt \right) \ d\nu_P(T).\nonumber
\end{align}

Because (for fixed $p$ and $t$) the convolution operators $\tilde\N_p$ and $\N_t$ are given by finite sums, they commute with all convergent integrals in $p$ and $t$. This leads directly to a pointwise majorization
\[ \aligned
\left|\frac{1}{P} \int_0^P \tilde{\N}_pf \ dp\right|  &= \left|f * \left[\frac{1}{P} \int_0^P \tilde\mu_p^N\right]\right|\\
&= \left|f * \left[\int_0^\infty \left( \frac{1}{T} \int_0^T \mu^N_t \ dt \right) \ d\nu_P(T)\right]\right|\\
&= \left|\int_0^\infty \left( \frac{1}{T} \int_0^T \N_tf \ dt \right) \ d\nu_P(T)\right| \\
&\leq \int_0^\infty \N_*f \ d\nu_P(T) \\
&= \N_*f, \endaligned\]
from which the result follows.
\end{proof}

Finally, we will compare the smooth maximal function with the reparametrized ``semigroup'' maximal function:
\begin{proposition} For any nonnegative function $f$ we have the pointwise inequality
\[ M^L_S f \lesssim \tilde{\N}_{*}f. \]
In particular, $M^L_S$ is of weak-type $1-1$, independent of dimension.
\end{proposition}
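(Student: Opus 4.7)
The plan is to realize the kernel $\tilde\mu_p^N$ as a convex combination of the spherical averages $\sigma_k$, and then, for each $K \leq c_m N$ appearing in the sup defining $M^L_S f$, pair it with a value $P \approx (K+1)/N$ in the sup defining $\tilde\N_* f$. The intuition is that the binomial weights $\binom{N}{k}p^k(1-p)^{N-k}$ concentrate mass on spheres of radius $\approx pN$, so averaging $\tilde\N_p f$ over $p \in [0,P]$ produces a weighted sum of the $\sigma_k\ast f$'s whose weights on every sphere of radius $k \leq PN \approx K$ are at least $\gtrsim 1/(K+1)$.

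Concretely, since $\tilde\mu_p^N(y) = (p/m)^{|y|}(1-p)^{N-|y|}$ depends only on $|y|$ and $|\s_k| = \binom{N}{k}m^k$, I would first verify the identity
\[
\tilde\mu_p^N \;=\; \sum_{k=0}^N b_k(p)\,\sigma_k, \qquad b_k(p) := \binom{N}{k}p^k(1-p)^{N-k},
\]
so that $\frac{1}{P}\int_0^P \tilde\N_p f\, dp = \sum_{k=0}^N a_k(P)\,(\sigma_k \ast f)$, with $a_k(P) := \frac{1}{P}\int_0^P b_k(p)\,dp$. An integration-by-parts (equivalently, the standard binomial--beta relation) yields
\[
\int_0^P b_k(p)\, dp \;=\; \frac{1}{N+1}\,\PP\bigl(\mathrm{Bin}(N+1, P) \geq k+1\bigr),
\]
reducing matters to a lower bound on a single binomial tail.

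For $K \leq c_m N$, I would set $P := (K+1)/N$ (truncating at $c_m$ to handle the boundary case, which only changes absolute constants). Then $\mathrm{Bin}(N+1, P)$ has mean $(N+1)P \geq K+1 \geq k+1$ for every $k \leq K$, so the tail probability should be bounded below by an absolute positive constant uniformly in $k \leq K$ and in $N$. This binomial tail estimate is the heart of the argument and the main technical step: one must handle both the small-$K$ regime (where the binomial is essentially Poisson with bounded mean, so a direct computation suffices) and the large-$K$ regime (via Berry--Esseen, or the classical fact that the median of a binomial lies within one of its mean). Granting such a uniform bound, $a_k(P) \gtrsim \frac{1}{P(N+1)} \approx \frac{1}{K+1}$ for all $k \leq K$, and hence for nonnegative $f$
\[
\frac{1}{K+1}\sum_{k \leq K}\sigma_k \ast f \;\lesssim\; \sum_{k=0}^N a_k(P)\,(\sigma_k \ast f) \;=\; \frac{1}{P}\int_0^P \tilde\N_p f\, dp \;\leq\; \tilde\N_* f.
\]
Taking the supremum over $K \leq c_m N$ yields the desired pointwise inequality $M^L_S f \lesssim \tilde\N_* f$, and the weak-type $(1,1)$ conclusion follows from the corresponding bound for $\tilde\N_*$ established in Proposition \ref{reparameterization}.
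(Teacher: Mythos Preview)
Your proposal is correct and follows the same overall architecture as the paper: write $\tilde\mu_p^N = \sum_k B(N,p,k)\,\sigma_k$ with $B(N,p,k)=\binom{N}{k}p^k(1-p)^{N-k}$, choose $P \approx K/N$, and show that $\frac{1}{P}\int_0^P B(N,p,k)\,dp \gtrsim \frac{1}{K+1}$ for every $k \leq K$, which is exactly the content of the paper's Lemma~\ref{bigkchoice}.

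Where you diverge is in the proof of that coefficient bound. The paper argues directly: it restricts the $p$-integral to the window $[l/N - \sqrt{l}/(2N),\, l/N]$ of length $\approx \sqrt{l}/N$, uses Stirling to get $B(N,l/N,l)\gtrsim 1/\sqrt{l}$, and then a logarithmic-derivative estimate to show $B(N,p,l)$ stays comparable to this value across the window, giving $\int_0^{P} B(N,p,l)\,dp \gtrsim 1/N$. Your route instead invokes the beta--binomial identity $\int_0^P B(N,p,k)\,dp = \frac{1}{N+1}\,\PP(\mathrm{Bin}(N{+}1,P)\geq k{+}1)$ and reduces to a uniform lower bound on a binomial upper tail at a threshold not exceeding the mean, which can be handled by the classical fact that the median of $\mathrm{Bin}(n,p)$ lies in $[\lfloor np\rfloor,\lceil np\rceil]$ (with a small separate check when $K$ is bounded, as you note). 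Both arguments work; yours is a bit more conceptual and avoids Stirling, while the paper's is entirely self-contained. One small caveat: with $P=(K+1)/N$ you may overshoot $c_m$ by $O(1/N)$ at the top of the range, and after truncation the mean $(N{+}1)c_m$ can fall just below $K{+}1$; your median argument then needs the extra observation that moving the threshold by one unit costs at most a single point mass $\lesssim 1/\sqrt{K}$, which is harmless for $K$ large and is absorbed into your small-$K$ case otherwise.
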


\begin{proof}
We may express
\[ \aligned
\tilde\mu_p^N &= \sum_{l=0}^N (p/m)^l (1-p)^{N-l} 1_{\s_l} \\
&= \sum_{l=0}^N \binom{N}{l} (p)^l (1-p)^{N-l} \frac{1}{m^l \binom{N}{l}} 1_{\s_l} \\
&= \sum_{l=0}^N B(N,p,l) \sigma_l \endaligned\]
where $B(N,p,l) := \binom{N}{l} (p)^l (1-p)^{N-l}$.

By Lemma \ref{bigkchoice} below (similar to \cite{HKS}), for each $L \leq N$ we can choose $P(L) \in (0, c_m]$ that satisfies the favorable pointwise comparison
\[
\frac{1}{L+1} \lesssim \frac{1}{P(L)} \int_0^{P(L)} B(N,p,l) \, dp
\]
for each $l \leq L$. Thus
\begin{align}\label{smoothnoisepwise}
\sum_{l \leq L} \frac{1}{L+1} \sigma_l &\lesssim \sum_{l \leq N} \left( \frac{1}{P(L)} \int_0^{P(L)} B(N,p,l) \ dp \right) \sigma_l\nonumber\\
\frac{1}{L+1} \sum_{l \leq L} \sigma_l &\lesssim \frac{1}{P(L)}\int_0^{P(L)} \tilde\mu_p^N \ dp.
\end{align}
Noting that all terms in \eqref{smoothnoisepwise} are nonnegative, we observe that for any nonnegative function $f$, we have the pointwise comparison
\[ \aligned
\frac{1}{L+1} \sum_{l \leq L} \sigma_l * f &\lesssim \left(\frac{1}{P(L)}\int_0^{P(L)} \tilde\mu_p^N \ dp\right) * f\\
&= \left(\frac{1}{P(L)}\int_0^{P(L)} \tilde\N_p f \ dp\right)\\
&\leq \tilde\N_* f
\endaligned \]

where the first equality above is justified as in Proposition \ref{reparameterization}. Taking a supremum over all $L \leq c_mN$ provides the desired pointwise inequality. To prove the weak-type bound, first observe that because $M_S$ is a supremum over convolution operators with nonnegative kernels, we immediately have the pointwise inequality $M_S |f| \geq M_S f$ for an arbitrary function $f$. Thus for any $\|f\|_{L^1\left(\Z_{m+1}^N\right)} = 1$
\begin{align}\label{absnoiseineq}
\|M_S f\|_{L^{1,\infty}(\Z_{m+1}^N)} &\leq \big\|M_S |f|\big\|_{L^{1,\infty}(\Z_{m+1}^N)}\nonumber\\
&\lesssim \big\|\tilde\N_* |f|\big\|_{L^{1,\infty}(\Z_{m+1}^N)}
\end{align}

Simply because $f$ and $|f|$ share $L^1$ norms (i.e. $1$), \eqref{absnoiseineq} is bounded by the weak $1-1$ operator norm of $\tilde\N_*$. Taking a supremum over all $L^1$ normalized $f$ then proves that $M_S$ inherits the dimension independent weak-type $1-1$ bound from $\tilde\N_*$.
\end{proof}

Applying the Marcinkiewicz interpolation theorem with the trivial $L^\infty$ bound yields the desired $L^p$ bounds.

\begin{cor}\label{smoothlpbounds}
The operator $M^L_S$ satisfies $L^p$ bounds for all $p > 1$ that depend on $p$ and $m$ but are independent of dimension.
\end{cor}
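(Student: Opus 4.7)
The plan is a direct application of the Marcinkiewicz interpolation theorem between the two endpoints $p=1$ and $p=\infty$. The weak-type $(1,1)$ endpoint is exactly the content of Proposition \ref{smoothweaktype}, with dimension-free constant $C_{1,m}$. The $L^\infty$ endpoint is trivial and dimension-free. The only subtlety worth checking is that the resulting interpolation constant depends only on $p$ and $m$, which follows because Marcinkiewicz produces constants involving only the two endpoint bounds (and the universal interpolation factor depending on $p$).

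First I would establish the trivial $L^\infty$ bound. For each fixed $k$, $\sigma_k$ is nonnegative with $\sum_x \sigma_k(x) = 1$, so $\sigma_k * f$ is an average and $\|\sigma_k * f\|_{L^\infty} \leq \|f\|_{L^\infty}$. The finite convex combination
\[ \frac{1}{K+1} \sum_{k \leq K} \sigma_k \]
is therefore itself a probability measure on $\Z_{m+1}^N$, so convolution with it is an $L^\infty$ contraction. Taking a supremum of absolute values over $K \leq c_m N$ preserves the bound
\[ \|M^L_S f\|_{L^\infty(\Z_{m+1}^N)} \leq \|f\|_{L^\infty(\Z_{m+1}^N)}, \]
which is manifestly dimension-independent with constant $1$.

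Next I would invoke Marcinkiewicz interpolation on the sublinear operator $M^L_S$ using the weak-type $(1,1)$ bound with constant $C_{1,m}$ from Proposition \ref{smoothweaktype} and the strong-type $(\infty,\infty)$ bound with constant $1$. This yields a strong-type $(p,p)$ inequality
\[ \|M^L_S f\|_{L^p(\Z_{m+1}^N)} \leq C_{p,m} \|f\|_{L^p(\Z_{m+1}^N)} \]
for every $1 < p < \infty$, where $C_{p,m}$ depends only on the interpolation exponent $p$ and on $C_{1,m}$, hence only on $p$ and $m$. In particular, $C_{p,m}$ is independent of $N$, completing the proof.

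There is no substantive obstacle: the operator is sublinear (even though $M^L_S f$ is defined via absolute values, Marcinkiewicz only requires sublinearity of $f \mapsto M^L_S f$, which is immediate from the triangle inequality applied inside the supremum), the two endpoint inequalities are in hand, and dimension-independence of both endpoints propagates through interpolation. The only small care needed is in the first step, where we noted that $M^L_S f \leq M^L_S |f|$ as in the previous proposition, so that the weak $(1,1)$ bound extends from nonnegative $f$ to arbitrary real- or complex-valued $f$ without cost, which is what Marcinkiewicz requires.
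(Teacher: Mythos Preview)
Your proof is correct and follows exactly the paper's approach: apply Marcinkiewicz interpolation between the dimension-free weak-type $(1,1)$ bound of Proposition~\ref{smoothweaktype} and the trivial $L^\infty$ bound. The paper states this in a single sentence, and your proposal simply fills in the routine details.
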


All that remains in the section is to prove the key Lemma \ref{bigkchoice}. Below we present a computational proof based on an application of Stirling's formula. Remark \ref{probabilisticexplanation} conveys a probabilistic intuition for that computation for this calculation, made rigorous by lemmas \ref{distantiid} and \ref{blbound}). To adapt the remark to this situation, one simply replaces $\sigma_N$ with $\sigma_0$.

We put forth both methods for completeness.

\begin{lemma}\label{bigkchoice}
For each $0 \leq l \leq L \leq c_mN$, there exists $P(L) \in (0,c_m]$ (independent of $N$ and $l$) such that
\[
\frac{1}{L+1} \lesssim \frac{1}{P(L)} \int_0^{P(L)} B(N,p,l) \, dp
\]
\end{lemma}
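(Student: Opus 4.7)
The plan is to choose $P(L) := \min\{c_m, \, (L+1)/(N+1)\}$ and reduce the integral estimate to a standard lower bound on a binomial tail probability. The starting point is the classical identity
\[
\binom{N}{l} \int_0^{x} p^l (1-p)^{N-l} \, dp \;=\; \frac{1}{N+1} \, \PP\!\ee[\mathrm{Bin}(N+1, x) \geq l+1\rr],
\]
obtained by recognizing the left-hand side as the regularized incomplete beta function $I_{x}(l+1, N-l+1)$ and expanding it as a binomial tail (equivalently, via integration by parts). With $x = P(L)$, the desired inequality reduces to
\[
\PP\!\ee[\mathrm{Bin}(N+1, P(L)) \geq l+1\rr] \;\gtrsim\; \frac{(N+1)\,P(L)}{L+1},
\]
and by the choice of $P(L)$ the right-hand side is bounded above by $1$.

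In the main case $P(L) = (L+1)/(N+1) \leq c_m$, the binomial $X := \mathrm{Bin}(N+1, P(L))$ has mean exactly $L+1 \geq l+1$, so it suffices to show $\PP[X \geq \EE X] \geq c$ for an absolute constant $c > 0$. This is well known: the median of $\mathrm{Bin}(n, p)$ lies within $1$ of $np$, and the atom at $\lceil np \rceil$ is $O(1/\sqrt{np(1-p)})$ by Stirling, so $\PP[X \geq \EE X] \geq \tfrac{1}{2} - o(1)$, with the finitely many small-$N$ cases verified by hand. In the complementary case $P(L) = c_m$ (which forces $L \asymp N$), the mean $c_m(N+1)$ differs from $L+1$ by $O(1)$, and since shifting the threshold by $O(1)$ perturbs the tail by only $O(1/\sqrt{N})$, the same bound applies.

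Since the paper advertises a Stirling-based proof, one can alternatively bypass the beta--binomial identity and proceed directly: restrict the integration to a window $[p^* - \delta, p^* + \delta] \cap [0, P(L)]$ around the mode $p^* = l/N$ of $B(N, p, l)$. Stirling gives peak height $B(N, l/N, l) \gtrsim 1/\sqrt{(l+1)(1 - l/N)}$, and the same computation shows that $B(N, p, l)$ remains at least half its peak over a window of width $\delta \asymp \sqrt{(l+1)(1 - l/N)}/\sqrt{N}$. This yields $\int_0^{P(L)} B(N, p, l) \, dp \gtrsim 1/N$; dividing by $P(L) \asymp (L+1)/N$ gives the required $\gtrsim 1/(L+1)$. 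The principal obstacle in either approach is uniform control of the boundary cases — $l = 0$, $l$ near $L$ near $c_m N$, and small $N$ — which are handled separately: the $l = 0$ case reduces to the explicit evaluation $\int_0^{P(L)} (1-p)^N \, dp = (1 - (1 - P(L))^{N+1})/(N+1) \gtrsim 1/(N+1)$ (since $P(L) \gtrsim 1/(N+1)$), and $l \approx L \approx c_m N$ is handled by using only the half $[p^*, p^* + \delta]$ of the peak window, which fits inside $[0, c_m]$ in the relevant regime.
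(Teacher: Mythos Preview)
Your beta--binomial approach is correct and genuinely different from the paper's. The paper chooses $P(L)=L/N$ (with $P(0)=1/N$) and argues directly: it restricts the integral to the \emph{left} half-window $[l/N-\sqrt{l}/(2N),\,l/N]$, uses Stirling to get the peak value $B(N,l/N,l)\gtrsim 1/\sqrt{l}$, and controls the variation across the window by bounding $\bigl|\log\bigl(B(N,l/N,l)/B(N,p,l)\bigr)\bigr|\lesssim 1$ via the explicit log-derivative $\partial_t\log B(N,t,l)=(l-Nt)/(t(1-t))$. Your reduction to $\PP[\mathrm{Bin}(N+1,P(L))\geq l+1]\gtrsim 1$ through the incomplete beta identity is conceptually cleaner and avoids Stirling entirely, trading it for the standard fact that the median of a binomial lies within one of its mean; since $(N+1)P(L)=L+1$ is an integer in your main case, the median actually equals the mean and $\PP[X\geq l+1]\geq\PP[X\geq\EE X]\geq 1/2$ is immediate.

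Your second sketch is essentially the paper's argument, with two slips worth flagging. First, the width over which $B(N,p,l)$ stays within a constant factor of its peak is $\asymp \sqrt{l(1-l/N)}/N$, not $\sqrt{l(1-l/N)}/\sqrt{N}$; with your stated width the product with the peak height would be $\asymp 1/\sqrt{N}$, contradicting $\int_0^1 B(N,p,l)\,dp=1/(N+1)$. Second, your boundary remark is backwards: when $l\approx L\approx c_mN$ the \emph{right} half $[p^*,p^*+\delta]$ overshoots $c_m$ by $\asymp 1/\sqrt{N}$, whereas the \emph{left} half $[p^*-\delta,p^*]$ always fits inside $[0,P(L)]$ (since $p^*=l/N\leq P(L)$ and $p^*-\delta\geq 0$ once $l\geq 1$ with $\delta\leq\sqrt{l}/(2N)$). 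This left half-window is exactly what the paper uses. Neither slip undermines the method; with the corrected window width and the left half, your Stirling outline coincides with the paper's proof.
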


\begin{proof}
We choose the value $P(L)$ as follows:
\[
P(L) =
\begin{cases}\frac{1}{N} \text{ if } L=0\\
\frac{L}{N} \text{ if } 1 \leq L \leq c_m N.
\end{cases}
\]

Because $\frac{P(L)}{L+1} \approx \frac{1}{N}$, it suffices to prove
\begin{align}\label{reducedsmoothinequality}
\frac{1}{N} \lesssim \int_0^{P(L)} B(N,p,l) \ dp
\end{align}
independent of $N$ and $l$. Also note that if $l=0$ we have
\[
\int_0^{P(L)} B(N,p,0) \ dp \geq \int_0^{1/N} (1-p)^N \ dp \gtrsim \frac{1}{N}
\]
so we can assume $1 \leq l$ (and recall $l \leq L \leq c_m N$). We estimate the right side of \eqref{reducedsmoothinequality} from below by
\[
\int_{l/N-\sqrt l/2N}^{l/N} B(N,p,l) \, dp
\]

From there it will suffice to show that for all $p \in [l/N-\sqrt l/2N,l/N]$ the inequality $B(N,p,l) \gtrsim 1/\sqrt l$ holds. To prove this, we first observe that by a direct application of Stirling's formula, $B(N,l/N,l) \gtrsim 1/\sqrt l$. Then we show that $B(N,p,l)$ maintains this bound for all
\[
\frac{l}{N}-\frac{\sqrt l}{2N} \leq p \leq \frac{l}{N}
\]
as follows:

\begin{align*}
\left| \ln \frac{B(N,l/N,l)}{B(N,p,l)} \right| &= \left| \int_p^{l/N} \partial_t \ln B(N,t,l) \ dt \right|\nonumber \\
&= \left| \int_p^{l/N} \frac{l-Nt}{t(1-t)} \ dt \right|\nonumber \\
&\leq \left(\frac{l}{N} - p\right) \left(\max_{t \in [p,l/N]}\frac{1}{t(1-t)}\right) \left(\max_{t \in [p,l/N]}l-Nt\right)\\
&\lesssim \frac{\sqrt l}{N} \frac{N}{l} \sqrt l \\
&= 1
\end{align*}

Exponentiating, it follows that
\[
\frac{B(N,l/N,l)}{B(N,p,l)} \approx 1 \implies B(N,p,l) \gtrsim \frac{1}{\sqrt l}.
\]
\end{proof}

\section{The Comparisons -- Stein's Method}\label{comparison}

The goal of this section is to prove the local spherical bound and set up much of the machinery for the distant spherical, and thus general spherical, bound.

\begin{theorem}\label{localbound}
The local spherical maximal operator $M^L$ given by
\[
M^L f(x) = \sup_{k \leq c_m N} |\sigma_k * f(x)|
\]
satisfies $L^p$ bounds for all $p > 1$ dependent only on $p$ and $m$.
\end{theorem}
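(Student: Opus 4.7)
The plan is to follow the four-step programme outlined in the Introduction: starting from the smooth local bound of Corollary \ref{smoothlpbounds}, transfer it to the rough maximal operator $M^L$ by an $L^2$-fluctuation estimate based on Krawtchouk multiplier analysis (Proposition \ref{tech}), and then close out via Stein's analytic interpolation.

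The natural first move is to split each rough average into its Ces\`aro-smoothed counterpart plus a fluctuation:
\[ \sigma_k * f = A_k f + (\sigma_k - A_k) * f, \qquad A_k f := \frac{1}{k+1}\sum_{j \leq k} \sigma_j * f. \]
Taking suprema over $k \leq c_m N$ yields the pointwise majorization $M^L f \leq M^L_S f + G f$, where $G f := \sup_{k \leq c_m N} |(\sigma_k - A_k) * f|$. Since $M^L_S$ is already bounded on $L^p$ for every $p>1$ by Corollary \ref{smoothlpbounds}, proving the theorem reduces to bounding $G$ on $L^p$, at which point a telescoping/iteration of the same device at successively finer scales (as in step~(3) of \cite{BK}) can replace the single difference by a dyadic family of ``rougher vs.\ rougher-still'' differences if needed.

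Second, I would control the fluctuation piece in $L^2$ by Plancherel on $\Z_{m+1}^N$. The Fourier transform $\F \sigma_k(S) = \kappa_k^N(|S|)$ is a radial Krawtchouk multiplier, so the operator $(\sigma_k - A_k)*$ has symbol $m_k(s) := \kappa_k^N(s) - \tfrac{1}{k+1}\sum_{j \leq k}\kappa_j^N(s)$. The content of Proposition \ref{tech} should be an orthogonality/decay estimate on $\{m_k(s)\}_k$ strong enough to dominate a pointwise supremum by a square function, giving
\[ \|Gf\|_{L^2(\Z_{m+1}^N)} \lesssim \ee\| \ee(\sum_{k \leq c_m N} |(\sigma_k - A_k) * f|^2\rr)^{1/2} \rr\|_{L^2(\Z_{m+1}^N)} \lesssim \|f\|_{L^2(\Z_{m+1}^N)}, \]
uniformly in $N$. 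Third, I would combine endpoints via Stein's analytic interpolation: embed the relevant (linearized) maximal operator in an analytic family $T_z$ on a vertical strip so that on $\Rea z = 0$ one recovers the $L^2$ bound from step two, while on $\Rea z = 1$ one inherits an $L^q$ bound (for $q$ close to $1$) from the semigroup maximal $\tilde\N_*$ of \S \ref{noise}. Analytic interpolation then yields $L^p$ bounds on $M^L$ for every $p>1$, with constant depending only on $p$ and $m$.

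The principal obstacle is the $L^2$-fluctuation step: extracting from Proposition \ref{tech} enough summable decay for the differences $m_k(s)$ — with the decay holding uniformly in both $N$ and the radial frequency variable $s$ — to pass from the pointwise supremum defining $G$ to the square function displayed above. Once this $L^2$ input is secured, packaging it into an analytic family and carrying out the Stein interpolation is a routine (if delicate) adaptation of the machinery in \cite{BK} and \cite{HKS}, and the local half of Theorem \ref{MAIN} follows.
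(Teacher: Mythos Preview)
Your outline matches the paper's strategy in spirit, but the interpolation step as you describe it has a genuine gap: it would not yield $L^p$ bounds for $1<p<2$. Your step two produces an $L^2$ bound on $G$ (hence on $M^L=S_*^{-1}$), and your step three proposes an analytic family with the $L^2$ bound at $\Rea z=0$ and the smooth $L^q$ bound at $\Rea z=1$. But in any such family $M^L$ sits at an \emph{endpoint} of the strip, so Stein interpolation only reproduces the $L^2$ bound you already had; it cannot push the exponent below $2$.

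What the paper actually does---and what your parenthetical ``telescoping/iteration'' must become, not as an afterthought but as the heart of the argument---is invoke the full Nevo--Stein Ces\`aro hierarchy $S_*^\lambda$ and prove dimension-free $L^2$ bounds for $S_*^{-t}$ for \emph{every} positive integer $t$. This is Proposition \ref{main}: by Plancherel, Corollary \ref{krawtchoukhigherderivatives}, and Proposition \ref{tech}, the square functions
\[
R_t f(x)^2 = \sum_{0 \le k \le c_m N} (k+1)^{2t-1}\,\bigl|\triangle^t P^k f(x)\bigr|^2
\]
are $L^2$-bounded uniformly in $N$ for each fixed $t$; Lemmas \ref{2} and \ref{3} then transfer this to $S_*^{-t+i\beta}$. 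With the $L^2$ endpoint at $\Rea\lambda=-t$ (arbitrarily far left) and the $L^q$ endpoint at $\Rea\lambda>0$ via Lemma \ref{1}, the target $M^L=S_*^{-1}$ lies \emph{strictly interior} to the strip, and taking $t$ large drives the interpolated exponent down to any $p>1$. Your single-difference symbol $m_k(s)=\kappa_k^N(s)-\tfrac{1}{k+1}\sum_{j\le k}\kappa_j^N(s)$ is essentially the $t=1$ case of this; it is necessary but not sufficient.
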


As announced, in this section we adapt the Nevo-Stein \cite{NS} spectral machinery to our present context. We prepare to do so in our first subsection:

\subsection{Krawtchouk Preliminaries}
It is helpful to define the convolution operators:
\[ P^k f(x):= f* \sigma_k(x). \]
Their discrete derivatives
\begin{align}\label{differencetk}
\triangle^0 P^k &:= P^k,\nonumber \\
\triangle^1 P^k &:= P^k - P^{k-1},\nonumber \\
&\ \vdots\nonumber \\
\triangle^t P^k &:= \triangle \ee(\triangle^{t-1} P^k\rr) =
\sum_{j \leq t} (-1)^j \binom{t}{j} P^{k-j}, \\
&\ \vdots\nonumber
\end{align}
and their associated (radial) multipliers
\[ \F \ee(\triangle^t P^k\rr) (|S|)\]
will be central to our study.

First, when $|S|=r$ we have \cite[\S 5.3]{CST}
\[ \F P^k(r) = \sum_{j=\max(0,r+k-N)}^{\min(r,k)} (-1)^j \binom{N}{k}^{-1} \binom{r}{j} \binom{N-r}{k-j} m^{-j} =: \kappa_r^N(k),\]
the $k$th (normalized) Krawtchouk polynomial in $\Z_{m+1}^N$. By expanding the binomial coefficients in the expression above, it is easy to see that $\kappa_r^N(k) = \kappa_k^N(r)$ for all $r,k,N$. We adopt the convention that if any of $r$, $k$, or $N$ is negative, then $\kappa_r^N(k) = 0$.

The Krawtchouk polynomials have the following useful difference relation:

\begin{lemma}\label{krawtchoukderivatives}
In $\Z_{m+1}^N$, if $r \geq 0$ and $k \geq 1$ are integers, then
\[
(\F \triangle P^k)(r) = \kappa_k^N(r) - \kappa_{k-1}^N(r) = \kappa_r^N(k) - \kappa_r^N(k-1) = \ee(-\frac{1}{c_m}\rr)\frac{\binom{N-1}{r-1}}{\binom{N}{r}} \kappa^{N-1}_{k-1}(r-1).\]
\end{lemma}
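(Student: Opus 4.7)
The first equality is immediate from the definition of the difference operator applied to $\F P^k(S) = \kappa_{|S|}^N(k)$, and the second is the classical symmetry $\kappa_r^N(k) = \kappa_k^N(r)$ which is visible from the explicit binomial sum defining $\kappa_r^N$. So all of the substantive content lies in the third equality, and my plan is to prove it by generating functions.

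Expanding $(1-z)^r$ and $(1+mz)^{N-r}$ via the binomial theorem and comparing with the definition of $\kappa_r^N(k)$, one immediately identifies
\[
F_r^N(z) := (1-z)^r(1+mz)^{N-r} = \sum_{k \geq 0} \binom{N}{k} m^k \kappa_r^N(k) \, z^k,
\]
so up to the scalar $\binom{N}{k} m^k$, the Krawtchouk values are the coefficients of $F_r^N$. The crux is the functional identity
\[
(1+mz)\, \partial_z F_r^N(z) - mN\, F_r^N(z) = -(m+1) r\, F_{r-1}^{N-1}(z),
\]
which I would verify in a few lines: differentiating the product gives $\partial_z F_r^N = -r(1-z)^{r-1}(1+mz)^{N-r} + m(N-r)(1-z)^r(1+mz)^{N-r-1}$, and using the elementary factorizations $F_r^N = (1-z)F_{r-1}^{N-1}$ and $F_{r-1}^{N-1}(z) = (1-z)^{r-1}(1+mz)^{N-r}$, the LHS collapses to $F_{r-1}^{N-1}\cdot[m(N-r)-r-mN] = -(m+1)r\, F_{r-1}^{N-1}$.

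With the functional identity in hand, it is mechanical to extract the coefficient of $z^k$ on each side. After a routine index shift this yields
\[
k\binom{N}{k}m^k \kappa_r^N(k) - m(N-k+1)\binom{N}{k-1}m^{k-1}\kappa_r^N(k-1) = -(m+1)r\binom{N-1}{k-1}m^{k-1}\kappa_{r-1}^{N-1}(k-1).
\]
Dividing by $k\binom{N}{k}m^k$ and simplifying via $\binom{N}{k-1}/\binom{N}{k} = k/(N-k+1)$ and $\binom{N-1}{k-1}/\binom{N}{k} = k/N$ reduces the LHS to $\kappa_r^N(k) - \kappa_r^N(k-1)$ and the RHS to $-\tfrac{m+1}{m}\cdot\tfrac{r}{N}\kappa_{r-1}^{N-1}(k-1)$. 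Combined with $\tfrac{r}{N} = \binom{N-1}{r-1}/\binom{N}{r}$, the symmetry $\kappa_{r-1}^{N-1}(k-1) = \kappa_{k-1}^{N-1}(r-1)$, and $\tfrac{1}{c_m} = \tfrac{m+1}{m}$, this is exactly the stated identity. I expect no genuine obstacle here; the only pitfall is keeping the binomial/power-of-$m$ bookkeeping straight in the coefficient extraction, which the choice of generating function is designed to minimize. A more brute-force alternative would be a direct algebraic manipulation of the defining binomial sums via Pascal's identity, but that route is considerably messier and hides the structural reason for the identity.
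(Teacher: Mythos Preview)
Your proof is correct. The generating function is right, the functional identity checks out line by line, and the coefficient extraction and simplifications are accurate (the ``coefficient of $z^k$'' you write down is really the $z^{k-1}$ coefficient after the index shift you mention, which is fine). The $r=0$ boundary case is also handled automatically since the right-hand side of your functional identity vanishes.

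The paper's own argument is genuinely different: rather than introducing the generating function $F_r^N$, it works directly on the Fourier side. Exploiting radiality, it evaluates $\kappa_r^N(k) - \kappa_r^N(k-1)$ as $\langle \sigma_r^N, \xi^{x\cdot y_k} - \xi^{x\cdot y_{k-1}}\rangle$ for the specific points $y_j = (1,\dots,1,0,\dots,0)$ with $j$ ones. The difference $\xi^{x_k}-1$ kills all terms with $x_k=0$, collapsing the sum over $\mathbb{S}_r^N$ to a sum over $\mathbb{S}_{r-1}^{N-1}$; the remaining factor $\sum_{x_k=1}^m(\xi^{x_k}-1) = -(m+1)$ produces the constant $-1/c_m$. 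This is a short character-theoretic computation that makes the dimension drop and the constant transparent in group-theoretic terms. Your generating-function route, by contrast, is purely algebraic and packages the same information into the differential identity for $F_r^N$; it has the advantage that it iterates cleanly to higher differences (essentially giving Corollary~\ref{krawtchoukhigherderivatives} in one stroke), while the paper's approach makes the harmonic-analysis meaning of each factor more visible.
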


\begin{proof}
For the boundary case $r=0$, direct computation shows that $\F P^k(r) = 1$ for all $k$. Thus, $\F \triangle P^k(r) = \F P^k(r) - \F P^{k-1}(r) = 0$. We may now assume $r$ is positive.

Because dimension is not a constant in the lemma, we adopt the notation
\begin{align*}
&\mathbb S_r^N = \ee\{x \in \mathbb Z_{m+1}^N: |x|=r\rr\}\\
&\sigma_r^N = \frac{1}{|\mathbb S_r^N|} 1_{\mathbb S_r^N}
\end{align*}

Letting $y_j^N = (1,\dots,1,0,\dots,0)$ with $j$ `$1$'s and $N-j$ `$0$'s, we exploit the radiality of $\mathcal F \sigma_j^N$ to see

\begin{align}
\kappa_r^N(k) - \kappa_r^N(k-1) &= \ee\langle \sigma_r^N, \xi^{x \cdot y_k^N} - \xi^{x \cdot y_{k-1}^N}\rr\rangle\nonumber\\
&= \frac{1}{|\mathbb S_r^N|} \sum_{x \in \mathcal S_r^N} (\xi^{x_1 + \dots + x_{k-1}})(\xi^{x_k} - 1)\nonumber\\
&= \frac{1}{|\mathbb S_r^N|} \sum_{x \in \mathcal S_{r-1}^{N-1}} (\xi^{x_1 + \dots + x_{k-1}}) \sum_{x_k=1}^m (\xi^{x_k} - 1)\label{expandeddifference}
\end{align}

The last equality follows from the observation that any summand corresponding to an $x \in \mathbb S_r^N$ such that $x_k = 0$ is $0$. As in the proof of Lemma \ref{characternoise} we have
\[
\sum_{x_k=1}^m (\xi^{x_k} - 1) = -(m+1)
\]
Rearranging \eqref{expandeddifference} then yields
\begin{align*}
-(m+1) \frac{|\mathbb S_{r-1}^{N-1}|}{|\mathbb S_r^N|} \ee\langle \frac{1}{|\mathbb S^{N-1}_{r-1}|} 1_{\mathbb S^{N-1}_{r-1}}, \xi^{x_1 + \dots + x_{k-1}}\rr\rangle &= -(m+1) \frac{|\mathbb S_{r-1}^{N-1}|}{|\mathbb S_r^N|} \ee\langle \sigma_{r-1}^{N-1}, \xi^{x \cdot y_{k-1}^{N-1}}\rr\rangle\\
&= -\frac{{{N-1} \choose {r-1}}}{c_m{{N} \choose {r}}} \kappa^{N-1}_{r-1}(k-1)\\
&= -\frac{{{N-1} \choose {r-1}}}{c_m{{N} \choose {r}}} \kappa^{N-1}_{k-1}(r-1)
\end{align*}
\end{proof}

Applying Lemma \ref{krawtchoukderivatives} $t$ times yields a useful general expression for higher orders differences.

\begin{cor}\label{krawtchoukhigherderivatives}
For any integers $0 \leq t \leq k$ and $0 \leq r$,
\[ (\F \triangle^t P^k)(r)= \ee(-\frac{1}{c_m}\rr)^t \frac{\binom{N-t}{r-t}}{\binom{N}{r}} \kappa^{N-t}_{r-t}(k-t). \]
Notice that, if $r < t$, this means $(\F \triangle^t P^k)(r) = 0$.
\end{cor}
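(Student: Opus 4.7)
The plan is a straightforward induction on $t$, with Lemma \ref{krawtchoukderivatives} providing both the base case ($t=1$) and the essential ``one more derivative'' step.

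For the inductive step, use the definition to write
\[
\triangle^t P^k = \triangle\ee(\triangle^{t-1} P^k\rr) = \triangle^{t-1} P^k - \triangle^{t-1} P^{k-1},
\]
take the Fourier transform of both sides, and apply the inductive hypothesis at $(N,k,r)$ and at $(N,k-1,r)$. Since the binomial prefactor $(-1/c_m)^{t-1}\binom{N-t+1}{r-t+1}/\binom{N}{r}$ is independent of $k$, it factors out and leaves
\[
(\F \triangle^t P^k)(r) = \ee(-\frac{1}{c_m}\rr)^{t-1} \frac{\binom{N-t+1}{r-t+1}}{\binom{N}{r}} \ee[\kappa^{N-t+1}_{r-t+1}(k-t+1) - \kappa^{N-t+1}_{r-t+1}(k-t)\rr].
\]
Now invoke the symmetry $\kappa^{N'}_{r'}(k') = \kappa^{N'}_{k'}(r')$ to rewrite the bracketed difference as $\kappa^{N-t+1}_{k-t+1}(r-t+1) - \kappa^{N-t+1}_{k-t}(r-t+1)$, which is exactly the kind of difference Lemma \ref{krawtchoukderivatives} handles, with parameters $(N,k,r)$ shifted to $(N-t+1, k-t+1, r-t+1)$. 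The lemma then yields
\[
\kappa^{N-t+1}_{k-t+1}(r-t+1) - \kappa^{N-t+1}_{k-t}(r-t+1) = \ee(-\frac{1}{c_m}\rr)\frac{\binom{N-t}{r-t}}{\binom{N-t+1}{r-t+1}} \kappa^{N-t}_{k-t}(r-t),
\]
and the $\binom{N-t+1}{r-t+1}$ factors telescope, leaving the claimed formula (with another application of the symmetry $\kappa^{N-t}_{k-t}(r-t) = \kappa^{N-t}_{r-t}(k-t)$ to match the statement).

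The only nonroutine bookkeeping concerns the boundary/degenerate cases. If $r < t$, then $\binom{N-t}{r-t} = 0$ (since $r-t < 0$), so the right-hand side vanishes as claimed; this is consistent because in the inductive step either $r-t+1 \leq 0$ (so the hypothesis is already trivially zero) or the last application of Lemma \ref{krawtchoukderivatives} produces a zero factor. One should similarly verify that the identity $r=0$ cases handled in Lemma \ref{krawtchoukderivatives} propagate correctly through the induction. I do not anticipate a real obstacle here; the whole argument is essentially a telescoping of binomial ratios driven by one application of Lemma \ref{krawtchoukderivatives} per order of difference.
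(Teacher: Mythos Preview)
Your proposal is correct and is exactly the approach the paper takes: the paper states the corollary as an immediate consequence of ``applying Lemma \ref{krawtchoukderivatives} $t$ times,'' and your induction is precisely that argument written out in full, including the telescoping of the binomial ratios and the use of the $r\leftrightarrow k$ symmetry. Your handling of the boundary cases $r<t$ is also consistent with the paper's conventions.
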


Now we define
\[ \aligned \partial^0 \kappa_r^N(k) &= \kappa_r^N(k), \\
\partial \kappa_r^N(k) &= \partial^1 \kappa_r^N(k) := \kappa_r^N(k) - \kappa_r^N(k-1), \endaligned \]
and $\partial^t \kappa_r^N(k) := \partial ( \partial^{t-1} \kappa_r^N(k))$, provided $t \leq \min\{ r, k\}$. Otherwise we set $\partial^t \kappa_r^N(k) =0$.
Using this notation, for $t \leq k$, we may express
\[ (\F \triangle^t P^k)(r) = \partial_t \kappa_k^N(r) = \ee(-\frac{1}{c_m}\rr)^t \frac{\binom{N-t}{r-t}}{\binom{N}{r}} \kappa^{N-t}_{k-t}(r-t).\]

\begin{remark}
The restriction to $k \geq 1$ in Lemma \ref{krawtchoukderivatives} and (and therefore $k \geq t$ in Corollary \ref{krawtchoukhigherderivatives}) is necessary because the identity $(\F \triangle^t P^k)(r) = \partial_t \kappa_k^N(r)$ breaks down for $t > k$.
\end{remark}

The following proposition, whose proof we defer to \S \ref{krawtchouk} below, is the key quantitative ingredient needed to anchor the argument:
\begin{proposition}\label{tech}
There exists a constant $d$ (dependent only on $m$) such that for all $r,k,N$ we have
\[ |\kappa_k^N(r)| \leq e^{-d \frac{rk}{N}}.\]
\end{proposition}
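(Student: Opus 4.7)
The plan is to rewrite $\kappa_k^N(r)$ as a single expectation and then apply a classical sampling-without-replacement comparison. Reading off the explicit formula in the excerpt, one immediately recognizes
\[
\kappa_k^N(r) \;=\; \EE\!\left[(-1/m)^{H}\right], \qquad H \sim \text{Hypergeometric}(N,r,k),
\]
since $\binom{r}{j}\binom{N-r}{k-j}/\binom{N}{k}=\Pr[H=j]$. Probabilistically, $H$ is the size of the intersection of a uniform random $k$-subset $J\subset[N]$ with a fixed $r$-set $\supp S$; the weight $(-1/m)^{H}$ arises from the product of the character expectations $\EE[\xi^{X(i)S(i)}]=-1/m$ over $i\in J\cap\supp S$, which in turn uses $\sum_{y=0}^{m}\xi^{yS(i)}=0$ for $S(i)\neq 0$.

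Next, I would discard the oscillatory phase via the triangle inequality,
\[
|\kappa_k^N(r)| \;\le\; \EE\!\left[(1/m)^{H}\right],
\]
so that the remaining task is governed entirely by the concentration of $H$ around $\EE H = rk/N$. To make this quantitative I would invoke Hoeffding's comparison theorem for sampling without replacement: for any convex $\phi$, $\EE\phi(H)\le \EE\phi(H')$ where $H'\sim\text{Binomial}(k,r/N)$ is the with-replacement analogue. Since $\phi(x)=(1/m)^{x}$ has second derivative $(\log m)^2(1/m)^x>0$ for $m\ge 2$, Hoeffding gives
\[
\EE[(1/m)^{H}] \;\le\; \EE[(1/m)^{H'}] \;=\; \left(1-\tfrac{r}{N}+\tfrac{r}{Nm}\right)^{k} \;=\; \left(1-\tfrac{m-1}{m}\cdot\tfrac{r}{N}\right)^{k} \;\le\; e^{-\frac{m-1}{m}\cdot\frac{rk}{N}}.
\]
Chaining these bounds yields the proposition with $d=(m-1)/m$, which is positive for every $m\ge 2$.

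The only real obstacle is conceptual: recognizing the probabilistic meaning of $\kappa_k^N(r)$ and selecting the convex function $\phi(x)=(1/m)^{x}$ to which Hoeffding's hypergeometric-to-binomial comparison applies cleanly. The edge cases $r=0$ or $k=0$ are trivial because $\kappa_k^N(r)=1$ and the right-hand side equals $1$ as well, and the elementary inequality $(1-x)^k\le e^{-xk}$ is valid in the relevant range because $(m-1)r/(mN)\le (m-1)/m<1$. One could sharpen the constant $d_m$ (the cancellation in $(-1/m)^{H}$ has been completely thrown away) by a saddle-point estimate of the generating function $\sum_k \binom{N}{k}m^k\kappa_k^N(r)z^k=(1-z)^r(1+mz)^{N-r}$, but since only positivity of $d_m$ is needed for the application in \S\ref{comparison}, the probabilistic route above is the cleanest.
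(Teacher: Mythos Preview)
Your argument is correct and is genuinely different from the paper's proof. The paper proceeds by a direct combinatorial attack on the alternating sum \eqref{krawtchoukdefinition}: it first shows (Lemma \ref{dominantterm}) that $|\kappa_k^N(r)|$ is bounded by the single largest summand magnitude $a_n$ via an alternating-series argument, then locates the peak index $n$ by solving the quadratic $R(j)=1$ and showing $n\gtrsim rk/N$ (Lemma \ref{peaklemma}), and finally combines this with the elementary bound $a_j\le m^{-j}$ through a three-case analysis. Your route bypasses all of that: recognizing $\kappa_k^N(r)=\EE[(-1/m)^H]$ with $H$ hypergeometric, discarding the sign, and invoking Hoeffding's sampling-without-replacement comparison for the convex function $\phi(x)=m^{-x}$ reduces the problem to a one-line binomial generating-function computation. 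The payoff is a considerably shorter proof with an explicit constant $d=(m-1)/m$, at the cost of importing Hoeffding's theorem as a black box; the paper's argument is entirely self-contained but substantially more laborious and yields only an implicit constant. Both approaches ultimately exploit the same mechanism---that the mass of the hypergeometric distribution is centered near $rk/N$, so the factor $m^{-j}$ forces exponential decay---but yours packages the concentration step cleanly into a single citation rather than a quadratic-formula computation.
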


\subsection{A Review of Nevo-Stein}
In this subsection, we shall regard $N$ as fixed, and
(quickly) review the comparison argument of \cite{S} as it relates to our current setting. For a fuller treatment, we refer the reader to \cite{NS}.

In the last subsection, we introduced the convolution operators $\{P^k\}$.
Because they are self-adjoint, positive $L^1$- and $L^\infty$-contractions, we may use the following outline from \cite{S}, \cite{NS}:

With $\lambda = \alpha + i \beta \in \C$, we recall the complex binomial coefficients
\[ A^\lambda_n = \frac{ (\lambda + 1)(\lambda + 2) \dots (\lambda + n)}{n!}, \ A_0^\lambda:= 1, A_{-n}^\lambda:=0.\]
We define the Ces{\`a}ro sums
\[ S_n^\lambda f(x) := \sum_{k\leq n} A_{n-k}^\lambda P^k f(x), \ \lambda \in \C,\]
for $n \leq c_m N$
and remark that for any integer $0 \leq t \leq k$, we have
\[
S_k^{-t-1}f(x) = \Delta^t P^k f(x)
\]
by a simple computation using \eqref{differencetk}. In particular, because we are only working with $S^\lambda_n$ for $n \leq c_m N$, Corollary \ref{krawtchoukhigherderivatives} shows that whenever $t > c_m N$ we have $S_n^{-t-1} f \equiv 0$.

The maximal functions associated to these higher Ces{\`a}ro means are
\[ S_*^\lambda f(x):= \max_{0 \leq n \leq c_m N} \left| \frac{S_n^\lambda f(x)}{(n+1)^{\lambda +1}} \right|. \]

The following lemmas are finitary adaptations of the results in \cite{NS}; we emphasize that the formal nature of the arguments in \cite{NS} allows them to be applied to any Ces{\`a}ro means of sequence of Markov operators operators that are $L^1$ and $L^\infty$ contractions.

\medskip

\begin{lemma}[\cite{NS}, Proof of Lemma 4, pp. 144-145]\label{1}
For $\alpha > 0, \beta \in \RR$, there exist positive constants $C_\alpha$ so that
\[ S_*^{\alpha+i\beta} f \leq C_\alpha e^{2 \beta^2} S_*^0|f| \]
holds pointwise.
\end{lemma}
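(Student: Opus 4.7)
I would prove the pointwise inequality by reducing the complex-order Ces\`aro maximal function to the $0$-th order one via the standard convolution identity for Ces\`aro sums, then controlling the complex binomial coefficients by their real counterparts times $e^{2\beta^2}$. From the generating function identity $(1-z)^{-(\alpha+i\beta)-1} = (1-z)^{-(\alpha+i\beta)} \cdot (1-z)^{-1}$, equating coefficients of power series yields
\[ S_n^{\alpha+i\beta} f = \sum_{k=0}^n A_{n-k}^{\alpha+i\beta-1}\, S_k^0 f. \]
Because each $P^j$ has a non-negative kernel, $|S_k^0 f| \leq S_k^0 |f| \leq (k+1) S_*^0 |f|$ by definition of $S_*^0$, so dividing by $(n+1)^{\alpha+i\beta+1}$ and taking absolute values gives
\[ \left|\frac{S_n^{\alpha+i\beta} f}{(n+1)^{\alpha+i\beta+1}}\right| \leq S_*^0|f| \cdot \frac{1}{(n+1)^{\alpha+1}} \sum_{k=0}^n (k+1)\, |A_{n-k}^{\alpha+i\beta-1}|. \]

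\textbf{The main estimate.} The key step, and the only nontrivial one, is to show $|A_j^{\alpha+i\beta-1}| \leq C_\alpha\, e^{2\beta^2}\, A_j^{\alpha-1}$. Writing $A_j^{\alpha+i\beta-1} = \Gamma(j+\alpha+i\beta)/[\Gamma(\alpha+i\beta)\,\Gamma(j+1)]$ and using the Weierstrass-product inequality $|\Gamma(x+iy)| \leq \Gamma(x)$ in the numerator gives
\[ |A_j^{\alpha+i\beta-1}| \leq \frac{\Gamma(\alpha)}{|\Gamma(\alpha+i\beta)|}\, A_j^{\alpha-1}. \]
Stirling's asymptotic yields $|\Gamma(\alpha+i\beta)|^{-1} \lesssim_\alpha (1+|\beta|)^{1/2-\alpha}\, e^{\pi|\beta|/2}$, and the AM--GM style inequality $\pi|\beta|/2 \leq 2\beta^2 + \pi^2/32$ absorbs this entirely into $C_\alpha\, e^{2\beta^2}$. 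This is the only place where the exponent $2\beta^2$ appears, and it is the main (indeed only) obstacle in the argument.

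\textbf{Assembly.} To close the sum, I would apply the identity from the first paragraph to the constant function $g \equiv 1$ (for which $P^j g \equiv 1$, so $S_k^0 g = k+1$ and $S_n^\alpha g = A_n^{\alpha+1}$), obtaining the purely combinatorial identity
\[ \sum_{k=0}^n (k+1)\, A_{n-k}^{\alpha-1} = A_n^{\alpha+1} \lesssim_\alpha (n+1)^{\alpha+1}. \]
Plugging the estimate of the second paragraph into the inequality of the first, this bound on the $\alpha-1$ convolution gives
\[ \frac{1}{(n+1)^{\alpha+1}} \sum_{k=0}^n (k+1)\, |A_{n-k}^{\alpha+i\beta-1}| \lesssim_\alpha e^{2\beta^2}. \]
Taking the max over $0 \leq n \leq c_m N$ yields $S_*^{\alpha+i\beta} f \leq C_\alpha e^{2\beta^2}\, S_*^0|f|$. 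Aside from the Gamma-function estimate, every step is formal; in particular this argument, being a manipulation of Ces\`aro means of $L^1/L^\infty$-contractions, does not use any specific structure of the operators $P^k$ beyond positivity and contractivity.
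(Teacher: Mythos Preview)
Your argument is correct and is precisely the standard Nevo--Stein argument: the paper does not give its own proof of this lemma but simply cites \cite[Proof of Lemma 4, pp.~144--145]{NS}, and your convolution identity $S_n^{\alpha+i\beta} = \sum_k A_{n-k}^{\alpha+i\beta-1} S_k^0$, the bound $|A_j^{\alpha+i\beta-1}| \le C_\alpha e^{2\beta^2} A_j^{\alpha-1}$ via $|\Gamma(x+iy)| \le \Gamma(x)$ and Stirling, and the summation identity $\sum_{k}(k+1)A_{n-k}^{\alpha-1}=A_n^{\alpha+1}$ are exactly the ingredients of that proof. Your concluding observation that only positivity and contractivity of the $P^k$ are used is also the point the paper stresses when invoking \cite{NS}.
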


\medskip

\begin{lemma}[\cite{NS}, Proof of Lemma 5, pp. 145-146]\label{2}
For each nonnegative integer $t$ and each real $\beta$, there exist positive constants $C_t$ so that
\[ S^{-t+i \beta}_*f \leq C_t e^{3 \beta^2} \left( S_*^{-t-1}f + S_*^{-t}f + \dots + S_*^{-1} f \right)\]
holds pointwise.
\end{lemma}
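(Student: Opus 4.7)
The proof is a finitary adaptation of \cite[Lemma 5]{NS}, and I would follow the same formal scheme, leveraging only that the $P^k$ are positive $L^1$--$L^\infty$ contractions. The algebraic backbone is the convolution identity for generalized binomial coefficients,
\[
A^{\mu+\nu+1}_n = \sum_{k=0}^n A^\mu_k A^\nu_{n-k},
\]
which follows from the generating function relation $(1-z)^{-\mu-1}(1-z)^{-\nu-1} = (1-z)^{-(\mu+\nu+1)-1}$. Specializing to $\mu = s-1-t+i\beta$ and $\nu = -s$ for each integer $s \in \{1,\ldots,t+1\}$ yields the family of representations
\[
S^{-t+i\beta}_n f \;=\; \sum_{l=0}^n A^{(s-1)-t+i\beta}_l\, S^{-s}_{n-l} f,
\]
one per integer $s$, each rewriting the complex-order Ces\`aro mean as a convolution of complex weights against a purely real integer-order Ces\`aro mean.

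The second ingredient is the size of the complex coefficients, controlled by Stirling's formula: for fixed real $\sigma$ and large $|\beta|$, $|\Gamma(\sigma + i\beta)| \sim \sqrt{2\pi}\,|\beta|^{\sigma-1/2} e^{-\pi|\beta|/2}$, so that $|A^{\sigma+i\beta}_l| \lesssim (l+1)^\sigma\, |\beta|^{1/2-\sigma} e^{\pi|\beta|/2}$. The polynomial $\beta$-factor and the exponential $e^{\pi|\beta|/2}$ are jointly absorbed into $e^{3\beta^2}$ by the elementary inequality $|\beta|^r e^{\pi|\beta|/2} \leq C_r e^{\beta^2}$, producing $|A^{(s-1)-t+i\beta}_l| \leq C_t\, e^{3\beta^2}\, (l+1)^{(s-1)-t}$. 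This is where the quadratic-in-$\beta$ exponent in the statement comes from. The base case $t = 0$ is then immediate: $|A^{i\beta}_l| \leq C e^{\pi|\beta|/2}$ is uniformly bounded, and positivity of $P^k$ together with $|P^k f| \leq P^k |f|$ gives $|S^{i\beta}_n f|/(n+1) \leq C_0\, e^{3\beta^2} S^{-1}_* |f|$, which is the $t=0$ case.

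The main obstacle for $t \geq 1$ is that applying any single representation and then invoking the triangle inequality against the crude bound $|S^{-s}_{n-l}| \leq (n-l+1)^{1-s} S_*^{-s}f$ produces a logarithmic or polynomial-in-$n$ divergence, since the coefficient decay $(l+1)^{(s-1)-t}$ sits precisely on the borderline of summability against $(n-l+1)^{1-s}$. Following \cite{NS}, the fix is a multi-scale decomposition: partition the index range $0 \leq l \leq n$ into dyadic blocks and, on the block where $n - l \approx 2^j$, use the representation whose value of $s$ makes the product $(l+1)^{(s-1)-t}(n-l+1)^{1-s}/(n+1)^{1-t}$ bounded and summable at that scale---balancing coefficient decay against the size of $S^{-s}_{n-l}$. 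Summing contributions across dyadic scales and over $s \in \{1,\ldots,t+1\}$ then produces the claimed pointwise bound $C_t\, e^{3\beta^2} \sum_{s=1}^{t+1} S_*^{-s} f$. The dyadic bookkeeping, while combinatorially intricate, is entirely formal and transfers verbatim from \cite{NS} once the algebraic identity and the Gamma-function estimates above are in place.
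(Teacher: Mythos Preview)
Your proposal is correct and follows exactly the approach the paper defers to: the paper does not supply its own proof of this lemma but simply cites \cite[Lemma 5, pp.~145--146]{NS}, noting that the argument there is formal and transfers verbatim to any sequence of positive $L^1$--$L^\infty$ contractions. Your sketch accurately reconstructs that argument---the convolution identity for the $A^\lambda_n$, the Gamma-function/Stirling bound yielding the $e^{3\beta^2}$ factor, and the dyadic balancing across the representations indexed by $s\in\{1,\dots,t+1\}$---so there is nothing to compare.
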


\medskip

\begin{lemma}[\cite{NS}, Proof of Lemma 5, p. 147]\label{3}
Define
\[ R_t f(x)^2 := \sum_{0 \leq k \leq c_mN} (k+1)^{2t-1}|S_k^{-t-1}f(x)|^2 \]
for any positive integer $t$. Then there exists a positive constant $c_{-t}$ so that
\[ S_*^{-t} f \leq c_{-t} R_t f  + 2S_*^{1-t}f\]
holds pointwise.
\end{lemma}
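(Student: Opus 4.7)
The plan is to derive the pointwise inequality via a summation-by-parts identity, adapting the Nevo-Stein argument. The basic recursion $A_n^\lambda - A_{n-1}^\lambda = A_n^{\lambda-1}$ for the complex binomial coefficients yields $S_n^{-t}f - S_{n-1}^{-t}f = S_n^{-t-1}f$, so the sequence $(S_k^{-t}f)_k$ has discrete derivative $(S_k^{-t-1}f)_k$. Applying Abel summation with weight $(k+1)^t$ gives the identity
\[
(n+1)^t \, S_n^{-t}f \;=\; \sum_{k=0}^n (k+1)^t \, S_k^{-t-1}f \;+\; \sum_{k=0}^{n-1} q_{t-1}(k)\, S_k^{-t}f,
\]
where $q_{t-1}(k) := (k+2)^t - (k+1)^t$ is a polynomial of degree $t-1$ in $k$. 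Dividing through by $n+1$ expresses $(n+1)^{t-1} S_n^{-t}f$ as two sums to estimate separately.

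For the first sum, Cauchy-Schwarz produces the square function contribution:
\[
\frac{1}{n+1}\sum_{k \leq n}(k+1)^t |S_k^{-t-1}f| \;\leq\; \frac{1}{n+1}\sqrt{\sum_{k \leq n}(k+1)}\,\sqrt{\sum_{k \leq n}(k+1)^{2t-1}|S_k^{-t-1}f|^2} \;\lesssim\; R_t f,
\]
with the weight $(k+1)^{2t-1}$ in the definition of $R_t f$ chosen precisely so that the factor $\sqrt{\sum(k+1)} \approx n+1$ cancels the outer $1/(n+1)$. For the second sum $\frac{1}{n+1}\sum q_{t-1}(k) S_k^{-t}f$, the plan is to iterate Abel summation $t-1$ more times, each iteration lowering the difference order by one (from $\Delta^{t-1}$ down to $\Delta^0 = I$) and the polynomial degree by one. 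This produces $t-1$ boundary terms plus a residual sum of $P^k f$ with constant coefficient; each such contribution (after $1/(n+1)$ normalization) ultimately reduces to a constant multiple of $S_*^{1-t}f$, producing the $2 S_*^{1-t}f$ term in the statement.

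For the base case $t = 1$ the argument is transparent: a single Abel summation gives $(n+1)P^n f = S_{n-1}^0 f + \sum(k+1)\Delta P^k f$, and the Cauchy-Schwarz estimate immediately yields $S_*^{-1}f \leq C R_1 f + S_*^0 f$, matching the lemma for $1-t=0$. The main obstacle for general $t$ is the combinatorial bookkeeping needed to confirm that each of the $t-1$ boundary terms arising in the iteration (which a priori involve maximal functions of differences of intermediate orders) is indeed bounded by $S_*^{1-t}f$, with any spurious intermediate $S_*^{-j}f$ contributions being either absorbable into $S_*^{1-t}f$ directly or reducible via a downward induction on $t$ (which then absorbs extra $R_s f$ terms, $s < t$, into the constant $c_{-t}$). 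This verification -- essentially the content of p.~147 of \cite{NS} -- hinges on tracking how the polynomial degrees and difference orders evolve under successive Abel summations and match the normalization factors built into the $S_*^\lambda$ norms.
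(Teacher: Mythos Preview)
The paper does not supply its own proof of this lemma; it is quoted directly from \cite{NS} (Proof of Lemma~5, p.~147), so there is no paper-specific argument to compare against. Your Abel-summation-plus-Cauchy--Schwarz outline is indeed the Nevo--Stein mechanism, and your treatment of the first sum (producing the $R_t f$ term) is correct.

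Your handling of the second sum $\frac{1}{n+1}\sum_{k<n} q_{t-1}(k)\, S_k^{-t}f$ is more involved than necessary and, as written, does not land on the stated conclusion. Iterating Abel summation $t-1$ further times (raising the Ces{\`a}ro index at each step) produces boundary contributions controlled by $S_*^{1-t}f,\, S_*^{2-t}f,\,\dots,\, S_*^{-1}f$ and a residual controlled by $S_*^{0}f$; these intermediate maximal functions are \emph{not} in general dominated by $S_*^{1-t}f$, so you would end up with
\[
S_*^{-t} f \;\lesssim_t\; R_t f \;+\; \sum_{j=1}^{t} S_*^{\,j-t} f
\]
rather than the stated inequality. (This weaker bound still suffices for the induction in the proof of Theorem~\ref{localbound}, but it is not the lemma as written, and your proposed ``absorption into $S_*^{1-t}f$'' does not go through.) The fix is to stop after a \emph{single} additional Abel summation: writing $S_k^{-t}f = S_k^{1-t}f - S_{k-1}^{1-t}f$ and summing by parts once more yields a boundary term $\lesssim_t (n+1)^{t-2}\,|S_{n-1}^{1-t}f| \lesssim_t S_*^{1-t}f$ together with a remaining sum $\frac{1}{n+1}\sum_{k} q_{t-2}(k)\, S_k^{1-t}f$ in which $|q_{t-2}(k)|\lesssim_t (k+1)^{t-2}$. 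Since $(k+1)^{t-2}\,|S_k^{1-t}f|\leq S_*^{1-t}f$ by definition of $S_*^{1-t}$, this remaining sum is already $\lesssim_t S_*^{1-t}f$ directly --- no further iteration is needed.
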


\begin{proposition}\label{main}
Let
\[R_t f(x)^2 := \sum_{0 \leq k \leq c_m N} (k+1)^{2t-1}|S_k^{-t-1}f(x)|^2.\]
Then there exist constants $C_{t,m}$ independent of $N$ so that
\[ \|R_t f \|_{L^2(\Z_{m+1}^N)} \leq C_{t,m} \| f\|_{L^2(\Z_{m+1}^N)}\]
for all $N$.
\end{proposition}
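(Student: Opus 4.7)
The plan is to reduce everything to a uniform pointwise-in-frequency bound via Plancherel and then sum the resulting polynomial-times-geometric series using the exponential Krawtchouk estimate of Proposition \ref{tech}. Since $S_k^{-t-1}f = \triangle^t P^k f$ and each $P^k$ is a convolution operator diagonalized by the characters $\chi_S$, Plancherel immediately gives
\[
\|R_t f\|_{L^2(\Z_{m+1}^N)}^2 \;=\; \sum_{k \leq c_m N} (k+1)^{2t-1} \|\triangle^t P^k f\|_{L^2}^2 \;=\; \sum_{S} |\hat f(S)|^2 \cdot A_t(|S|,N),
\]
where $A_t(r,N) := \sum_{k \leq c_m N} (k+1)^{2t-1} |(\F \triangle^t P^k)(r)|^2$. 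It therefore suffices to establish a uniform bound $A_t(r,N) \lesssim_{t,m} 1$ for all $r \leq N$ and all $N$.

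For this, Corollary \ref{krawtchoukhigherderivatives} expresses the symbol as
\[
(\F \triangle^t P^k)(r) \;=\; \ee(-\frac{1}{c_m}\rr)^t \frac{\binom{N-t}{r-t}}{\binom{N}{r}} \kappa^{N-t}_{k-t}(r-t),
\]
which vanishes for $r < t$. For $r \geq t$, the falling-factorial identity $\binom{N-t}{r-t}/\binom{N}{r} = r(r-1)\cdots(r-t+1)/\big[N(N-1)\cdots(N-t+1)\big]$ is bounded by $(2r/N)^t$ once $N \geq 2t$, and Proposition \ref{tech} applied in dimension $N-t$ yields $|\kappa^{N-t}_{k-t}(r-t)| \leq \exp\!\big(-d(k-t)(r-t)/(N-t)\big)$. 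Combining these,
\[
|(\F \triangle^t P^k)(r)|^2 \;\lesssim_{t,m}\; \ee(\frac{r}{N}\rr)^{2t} \exp\!\ee(-2d\,\frac{(k-t)(r-t)}{N-t}\rr).
\]

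Setting $\beta := 2d(r-t)/(N-t)$, the elementary estimate $\sum_{k \geq t}(k+1)^{2t-1} e^{-\beta(k-t)} \lesssim_t \max(1,\beta^{-2t})$ then finishes things case by case. If $\beta \geq 1$, the sum is $O_t(1)$ and the prefactor $(r/N)^{2t} \leq 1$ is harmless. If $0 < \beta < 1$ and $r \geq 2t$, then $r - t \geq r/2$, so $(r/N)^{2t}\beta^{-2t} \lesssim_{t,m} 1$. The narrow edge range $t \leq r < 2t$ (where $\beta$ may be arbitrarily small) must be handled directly, using $\kappa^{N-t}_{k-t}(0) = 1$ together with $\binom{N}{t}^{-2} \lesssim_t N^{-2t}$, which dominates the $O(N^{2t})$ growth of $\sum_{k \leq c_m N} k^{2t-1}$. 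In every regime $A_t(r,N) \lesssim_{t,m} 1$, and Plancherel closes the argument. The essential difficulty — and the reason Proposition \ref{tech} is doing all the real work — is balancing $(r/N)^{2t}$ against the potentially divergent tail $\beta^{-2t}$ when $r$ is small; any bound on the Krawtchouks weaker than exponential in $kr/N$ would leave us with divergence in $N$.
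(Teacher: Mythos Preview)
Your argument is essentially the paper's: Plancherel reduces to a uniform frequency-side bound $A_t(r,N)\lesssim_{t,m}1$, Corollary \ref{krawtchoukhigherderivatives} plus Proposition \ref{tech} converts the symbol to a binomial-ratio prefactor times a geometric tail, and the balance $(r/N)^{2t}\cdot\beta^{-2t}\lesssim_{t,m}1$ closes things. Two small patches are needed. First, Corollary \ref{krawtchoukhigherderivatives} is stated only for $k\ge t$, so the finitely many terms $0\le k<t$ in $A_t(r,N)$ must be handled separately; the paper does this by observing that each $S_k^{-t-1}$ is a finite linear combination of the $L^2$-contractions $P^0,\dots,P^k$ with coefficients depending only on $t$. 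Second, your edge-range treatment invoking $\kappa^{N-t}_{k-t}(0)=1$ and $\binom{N}{t}^{-2}\lesssim_t N^{-2t}$ is specific to $r=t$ and does not cover $t<r<2t$; however, your case-2 estimate already handles all $r>t$, since there $(r/N)^{2t}\beta^{-2t}\approx_{t,m}\big(r/(r-t)\big)^{2t}\le(2t)^{2t}\lesssim_t1$, so the separate edge case is only needed at $r=t$ (exactly as in the paper).
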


Before the proof of Proposition \ref{main}, we show that it implies dimension independent $L^p$ bounds on $M^L$.

\begin{proof}[Theorem \ref{localbound}, Assuming Proposition \ref{main}]\label{localboundproofassumingmain}

First we note that $S_*^0$ is the smooth local spherical maximal operator $M^L_S$ from Proposition \ref{smoothweaktype} while $S_*^{-1}$ is the local spherical maximal operator $M^L$ so our goal is to establish dimension independent $L^p$ bounds on $S_*^{-1}$.

By Corollary \ref{smoothlpbounds}, we know that there exist constants $\{A_{p,m}\}$, $p > 1$, so that for each $N$,
\[ \big\| S_*^0 |f| \big\|_{L^p(\Z_{m+1}^N)} \leq A_{p,m} \|f\|_{L^p(\Z_{m+1}^N)},\]
where the operators $\{S_*^0\}$ are $N$-dependent, but the bounds are not.

By Lemma \ref{1}, for each $\alpha > 0, \beta \in \RR$, we therefore have the bound
\[ \| S_*^{\alpha+i\beta} f \|_{L^p(\Z_{m+1}^N)} \leq C_\alpha e^{2 \beta^2} A_{p,m} \| f \|_{L^p(\Z_{m+1}^N)}\]
independent of $N$.

By Proposition \ref{main}, Lemma \ref{3}, and induction on $t$, we see that there exist constants $\{B_{2,m}^t\}, t \geq 1$ so that for all $N$,
\[ \|S_*^{-t} f\|_{L^2({\Z_{m+1}}^N)} \leq B_{2,m}^t \|f\|_{L^2({\Z_{m+1}}^N)}.\]
By Lemma \ref{2}, this means that for all $N$, there exist constants $\{D_{2,m}^t \}$ so that
\[ \|S_*^{-t+i\beta} f\|_{L^2({\Z_{m+1}}^N)} \leq e^{3\beta^2} D_{2,m}^t \|f\|_{L^2({\Z_{m+1}}^N)} \]
for all $N$.

The theorem then follows by linearizing the $S_*^{-1}$-supremum and applying the Stein interpolation theorem as in the conclusion of the proof of \cite[Theorem 2, pp. 150-151]{NS}.
\end{proof}

It remains only to prove Proposition \ref{main}, which we accomplish in the following subsection.

\subsection{Proof of Proposition \ref{main}}
\begin{proof}
We proceed by truncating $R_t f(x)^2$ after $t$ summands and bound the tail later. Each individual operator $S_k^{-t-1}$ is bounded in $L^2(\Z_{m+1}^N)$ with a bound dependent on $k$ and $t$. Thus, letting $c_t := \max_{k < t} \|S_k^{-t-1}\|_2$,
\[\aligned
\sum_{x \in \Z_{m+1}^N} \sum_{k = 0}^{t-1} (k+1)^{2t-1} |S_k^{-t-1}f(x)|^2 &= \sum_{0 \leq k < t} (k+1)^{2t-1} \|S_k^{-t-1}f(x)\|_2^2\\
&\leq c_t^2 \sum_{0 \leq k < t} (k+1)^{2t-1} \|f\|_2^2\\
&\lesssim_t \|f\|_2^2
\endaligned\]

Now we move on to establish the desired bound for the tail, namely
\[
\sum_{k=t}^{c_m N} (k+1)^{2t-1} |S_k^{-t-1}f(x)|^2 = \sum_{k = t}^{c_m N} (k+1)^{2t-1} |\triangle^{t} P^k f(x)|^2 \lesssim_t \|f\|_2.
\]

By Plancherel, it is enough to show that there exists a constant, $C'_{t,m}$, independent of $r$ and $N$, so that for all $r$
\[ \sum_{k=t}^{c_mN} (k+1)^{2t-1} \ee|\F \triangle^{t} P^k\rr|^2(r)
\leq C'_{t,m} \]

or equivalently

\begin{align}\label{krawtchoukdifferencesum}
\sum_{k=t}^{c_mN} (k+1)^{2t-1} \ee|\partial^{t} \kappa_r^N(k)\rr|^2 \leq C'_{t,m}.
\end{align}

If $r<t$, each summand is $0$ so without loss of generality $r \geq t$. Ignoring a finite set of cases for fixed $t$, we can assume that $N > 2t$. Vital to the proof is the difference relation
\[ \partial^{t} \kappa_r^N(k)= \ee(-\frac{1}{c_m}\rr)^t \frac{\binom{N-t}{r-t}}{\binom{N}{r}} \kappa^{N-t}_{r-t}(k-t) \]
from Corollary \ref{krawtchoukhigherderivatives} and the upper bound
\[ |\kappa_{r-t}^{N-t}(k-t)| \leq e^{-d \frac{r-t}{N-t} (k-t)}\]
from Proposition \ref{tech}.

We first handle of the boundary case $r=t$, in which
\[ \kappa_{r-t}^{N-t}(k-t) = \kappa_{0}^{N-t}(k-t) = 1.\]
In this instance, we estimate
\[ \aligned
\sum_{k=0}^{c_mN} (k+1)^{2t-1} |\partial^{t} \kappa_r^N(k)|^2 &\leq
\sum_{k=1}^{N} k^{2t-1} \left( \frac{(c_m)^{-t}}{\binom{N}{t}} \right)^2 \\
&\lesssim \left( \frac{(N/c_m)^t}{\binom{N}{t}} \right)^2 \\
&\lesssim_t 1, \endaligned\]
simply bounding $\binom{N}{t}$ from below by $(N-t)^t/t^t \approx_t N^t$ because $N > 2t$. Henceforth, we may assume $r>t$. 

Seeking the bound \eqref{krawtchoukdifferencesum}, we estimate
\[ \aligned
\sum_{k=t}^{c_mN} (k+1)^{2t-1} |\partial^{t} \kappa_r^N(k)|^2 &\leq
\sum_{k=t}^{\infty} (k+1)^{2t-1} |\partial^{t} \kappa_r^N(k)|^2 \\
&\lesssim_t \sum_{k=t}^{\infty} (k+1)^{2t-1} \left| \frac{\binom{N-t}{r-t}}{\binom{N}{r}} e^{-d \frac{r-t}{N-t} (k-t)} \right|^2 \\
&= \left( \frac{\binom{N-t}{r-t}}{\binom{N}{r}} \right)^2 \sum_{k=t}^{\infty} (k+1)^{2t-1} e^{-2d \frac{r-t}{N-t} (k-t)} \\
&= \left( \frac{\binom{N-t}{r-t}}{\binom{N}{r}} \right)^2 \sum_{k=0}^{\infty} \big(k+(t+1)\big)^{2t-1} e^{-2d \frac{r-t}{N-t} k}. \endaligned\]

We record the following easy lemma concerning infinite series:
\begin{lemma}\label{infiniteseries}
For any positive integer $n$, there exists a constant $A_n$ such that for all $|s| < 1$,
\[
\sum_{k=0}^{\infty} k^n s^k \leq \frac{A_n}{(1-s)^{n+1}}
\]
\end{lemma}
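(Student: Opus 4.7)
The plan is to start from the geometric series identity $\sum_{k=0}^\infty s^k = \frac{1}{1-s}$ and differentiate $n$ times with respect to $s$. Since the series converges uniformly on compact subsets of the open unit disk, termwise differentiation is legitimate and produces the identity
\[ \sum_{k=0}^\infty (k+1)(k+2)\cdots(k+n)\, s^k \;=\; \frac{n!}{(1-s)^{n+1}}, \]
valid for $s \in [0,1)$ (which is the only range needed in the application to Proposition \ref{main}, where $s = e^{-2d(r-t)/(N-t)}$).

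Next, I would combine this with the elementary termwise inequality
\[ k^n \;\leq\; (k+1)(k+2)\cdots(k+n), \]
which is immediate because each of the $n$ factors on the right is at least $k$ (and the case $k=0$ is trivial since $k^n = 0$). Multiplying by $s^k \geq 0$ and summing preserves the inequality, so
\[ \sum_{k=0}^\infty k^n s^k \;\leq\; \sum_{k=0}^\infty (k+1)(k+2)\cdots(k+n)\, s^k \;=\; \frac{n!}{(1-s)^{n+1}}. \]
Hence $A_n := n!$ works, and in fact this constant is essentially sharp up to lower-order terms in $(1-s)$.

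There is no real obstacle here; the only thing worth checking carefully is that one identifies a majorant of $k^n$ that admits an exact closed-form generating function. The falling-factorial product $k(k-1)\cdots(k-n+1)$ is the natural candidate but goes the wrong way (it undershoots $k^n$), which is why I would use the rising factorial $(k+1)\cdots(k+n)$ instead. Once that comparison is made, the estimate falls out of a single differentiation of the geometric series.
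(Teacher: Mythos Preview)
Your argument is correct for $s\in[0,1)$, which as you note is the only range used in the application. The paper's proof is close in spirit---it also differentiates the geometric series---but applies the operator $Lf(s):=s\,f'(s)$ so that $L^n\frac{1}{1-s}=\sum_{k\ge 0}k^n s^k$ exactly, then shows by induction that this equals $\frac{s^n+p_n(s)}{(1-s)^{n+1}}$ for some polynomial $p_n$ of degree $n-1$, and bounds the numerator on $|s|<1$.

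The practical differences are minor. Your route avoids the induction and yields the explicit constant $A_n=n!$, at the cost of restricting to nonnegative $s$ (since the termwise inequality $k^n s^k\le (k+1)\cdots(k+n)\,s^k$ needs $s^k\ge 0$). The paper's route, by bounding $|s^n+p_n(s)|$, covers all $|s|<1$ at once but leaves the constant implicit. Either argument suffices for Proposition~\ref{main}.
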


\begin{proof}
Define the operator
\[ Lf(s):= s \frac{df}{ds}(s),\]
and note that
\[ \sum_{k=0}^{\infty} k^n s^k = L^n \frac{1}{1-s}\]
Induction on $n$ shows that the right side of this equation can be expressed as $\frac{s^n + p_n(s)}{(1-s)^{n+1}},$ where $p_n(s) := \sum_{j <n} a_j^n s^j$ is a polynomial of degree $n-1$.

In particular, for $s < 1$, we may bound
\[ \left|\frac{s^n + p_n(s)}{(1-s)^{n+1}} \right| \leq \frac{A_n}{(1-s)^{n+1}},\]
where we let $A_n:= 1+\sum_{j<n} |a_j^n|$.
\end{proof}

Now, following the lead (and notation) of \cite[\S 4]{HKS}, we set
\[ \alpha = \alpha(r):= 2d \frac{r-t}{N-t};\]
possibly after reducing $d$, we may assume that $d< \frac{1}{2}$,
so that
\[ |\alpha(r)| \leq 2d \frac{r-t}{N-t} \leq 2d < 1\]
for all $r$.

In the following estimate we use the notation $A_n$ from Lemma \ref{infiniteseries}.
\[ \aligned
\sum_{k=0}^{\infty} \big(k+(t+1)\big)^{2t-1} e^{-2d \frac{r-t}{N-t} k} &=
\sum_{k=0}^{\infty} \big(k+(t+1)\big)^{2t-1} e^{-\alpha k} \\
&= \sum_{k=0}^{\infty} \left( \sum_{n=0}^{2t-1} \binom{2t-1}{n} k^n (t+1)^{2t-1-n} \right) e^{-\alpha k} \\
&\leq \sum_{k=0}^{\infty} \left( \sum_{n=0}^{2t-1} (2t)^{2t} k^n (t+1)^{2t} \right) e^{-\alpha k} \\
&\lesssim_t \sum_{n=0}^{2t-1} \left( \sum_{k=0}^{\infty} k^n e^{-\alpha k} \right) \\
&\lesssim_t \sum_{k=0}^{\infty} k^{2t-1} e^{-\alpha k} \\
&\leq \frac{A_{2t-1}}{(1-e^{-\alpha})^{2t}}\\
&\lesssim_t \alpha^{-2t}, \endaligned \]
where we used the mean value theorem in passing last line.

The upshot is that we may bound
\[ \sum_{k=0}^{\infty} (k+(t+1))^{2t-1} e^{-2 d \frac{r-t}{N-t} k} \lesssim_t \left( \frac{N-t}{r-t} \right)^{2t},\]
so that we have
\[
\sum_{k=0}^{c_mN} (k+1)^{2t-1} |\partial^{t} \kappa_r^N(k)|^2 \lesssim_t
\left[\left( \frac{\binom{N-t}{r-t}}{\binom{N}{r}} \right) \left( \frac{N-t}{r-t} \right)^{t} \right]^2.\]

Of course uniformly in $0 \leq j \leq t$, we have the equivalence $N-j \approx_t N$ and $r-j \approx_t r$ (recall $r \geq t+1$) so direct computation shows
\[
\frac{\binom{N-t}{r-t}}{\binom{N}{r}} \approx_t  \ee(\frac{r}{N}\rr)^t \text{ and } \ee(\frac{N-t}{r-t}\rr)^t \approx_t \ee(\frac{N}{r}\rr)^t,
\]
thus proving the bound.\end{proof}

\section{Distant Spheres}\label{distant}

The strategy for bounding maximal averages over distant spheres is to bound (up to a constant) the smooth distant spherical maximal operator
\[
M^D_S f = \sup_{D \leq \frac{N}{m+1}} \bigg|\frac{1}{D+1} \sum_{d \leq D} \sigma_{N-d} *f\bigg|
\]
by the maximal operator given by precomposing the smooth local spherical maximal operator by $P^N$, the outermost spherical average. Explicitly, this operator is
\[
\sup_{L \leq c_m N} \ee|\frac{1}{L+1} \sum_{l \leq L} \big(\sigma_l * \sigma_N * f\big)(x)\rr|.
\]

The latter operator inherits the dimension independent $L^p$ bounds on $M^L_S$ from Corolloary \ref{smoothlpbounds} simply becasue $P^N$ is an $L^p$ contraction for all $1 \leq p \leq \infty$. Once $L^p$ bounds are established for $M^D_S$, the  arguments from $\S \ref{comparison}$ work similarly to bound $M^D$.

\begin{lemma}\label{distantiid}
For any $k \leq c_m N$, 
\[
\sigma_k * \sigma_N = \sum_{d \leq k} b_k(d) \sigma_{N-d}
\]
where $b_k(d)$ is the probability mass of a sum of $k$ i.i.d. copies of a random variable
\[
X :=
\begin{cases}
0 \text{ with probability } \frac{m-1}{m} \\
1 \text{ with probability } \frac{1}{m}.
\end{cases}
\]
\end{lemma}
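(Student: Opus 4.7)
The plan is to compute $(\sigma_k * \sigma_N)(x)$ directly via a counting argument, verify that the answer depends on $x$ only through $|x|$, and then match the resulting coefficients to the claimed binomial distribution via a standard identity.

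First I would unfold the convolution as
\[ (\sigma_k * \sigma_N)(x) = \frac{1}{|\s_k|\,|\s_N|}\,\bigl|\{y \in \Z_{m+1}^N : |y| = N,\ |x-y| = k\}\bigr|, \]
and count the right-hand cardinality by splitting according to whether each coordinate of $x$ is zero. Let $d := N-|x|$ be the number of zero coordinates of $x$. On such coordinates any choice $y(i) \in \{1,\dots,m\}$ (forced by $|y|=N$) automatically gives $x(i)-y(i) \neq 0$, yielding $m$ choices and a forced contribution of $d$ to $|x-y|$. On the remaining $N-d$ coordinates (where $x(i) \neq 0$), the event $y(i) \neq x(i)$ contributes to $|x-y|$ and has $m-1$ choices, while $y(i) = x(i)$ is the unique alternative. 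To reach $|x-y|=k$ we therefore need exactly $k-d$ of the $N-d$ nonzero coordinates of $x$ to disagree with $y$, giving
\[ m^{d}\binom{N-d}{k-d}(m-1)^{k-d} \]
configurations (and $0$ if $d > k$).

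Since this count depends on $x$ only through $d = N - |x|$, the convolution $\sigma_k * \sigma_N$ is radial, so it must be a linear combination $\sum_{d \leq k} b_k(d)\,\sigma_{N-d}$. To extract $b_k(d)$, I would evaluate both sides at a point $x$ with $|x| = N-d$: the left side equals the count above divided by $|\s_k|\,|\s_N|$, while the right side equals $b_k(d)/|\s_{N-d}|$. Plugging in $|\s_r| = \binom{N}{r} m^{r}$ yields
\[ b_k(d) = \frac{\binom{N}{d}\binom{N-d}{k-d}(m-1)^{k-d}}{\binom{N}{k}\,m^{k}}. \]

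The final step is the trinomial identity $\binom{N}{d}\binom{N-d}{k-d} = \binom{N}{k}\binom{k}{d}$, after which the $\binom{N}{k}$ cancels and
\[ b_k(d) = \binom{k}{d}\Bigl(\frac{1}{m}\Bigr)^{d}\Bigl(\frac{m-1}{m}\Bigr)^{k-d}, \]
which is exactly the $\mathrm{Binomial}(k, 1/m)$ probability mass function, i.e.\ the distribution of the sum of $k$ i.i.d.\ copies of $X$. There is no serious obstacle here; the only thing to keep track of is the bookkeeping between zero and nonzero coordinates of $x$, and the main insight is that the count factors through $|x|$ alone, which is what makes the convolution radial and produces a genuine probability distribution on the distant spheres $\{\sigma_{N-d}\}_{d=0}^{k}$.
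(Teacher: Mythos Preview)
Your proof is correct and follows essentially the same approach as the paper: a direct combinatorial count showing $\sigma_k * \sigma_N$ is radial and then reading off the coefficients. The only organizational difference is that the paper first establishes radiality via a separate partition-and-count argument and then extracts $b_k(d)$ by pairing against $1_{\s_{N-d}}$ (counting over $(y,z)\in\s_k\times\s_N$), whereas you do both at once by counting over $y\in\s_N$ with $|x-y|=k$ and then invoking the trinomial identity $\binom{N}{d}\binom{N-d}{k-d}=\binom{N}{k}\binom{k}{d}$; your version is arguably a little cleaner, but the content is the same.
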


\begin{proof}

Notice that $\sigma_k * \sigma_N(x)$ is a nonnegative function with integral $1$, supported on $\{x \in \Z_{m+1}^N: |x| \geq N-k\}$. First we show that this function is radial by fixing $x$ such that $N-k \leq |x| \leq N$ and observing that the number of pairs $(y,z) \in \Bbb S_k \times \Bbb S_N$ such that $x = y+z$ depends only on $|x|$.

To see this, we partition the pairs into sets $S_j$ containing all those $(y,z)$ such that exactly $j$ of the nonzero components of $y$ (note that there are $k$ such components in total) have indices $i$ such that $x_i = 0$. A counting argument shows that
\[
|S_j| = \binom{N-|x|}{j} m^j \binom{|x|}{k-j} (m-1)^j.
\]
Summing up $|S_j|$ from $j=0$ to the lesser of $N-|x|$ and $|x|-k$ proves radiality. Thus we may write
\[
\sigma_k * \sigma_N(x) = \sum_{d \leq k} b_k(d) \sigma_{N-d}
\]

with $b(0) + \dots + b(k) = 1$ and $b(0), \dots , b(k) > 0$.

Another counting argument shows that for any fixed $0 \leq d \leq k$ and $z \in \Bbb S_N$,
\[
\ee|\ee\{y \in \Bbb S_k: |y+z|=N-d\rr\}\rr| = \binom{N}{k} \binom{k}{d} (m-1)^{k-d}.
\]
Therefore
\begin{align}\label{blexpression}
b_k(d) &= \ee\langle \sigma_k * \sigma_N, 1_{\Bbb S_{N-d}}\rr\rangle\nonumber\\
&= \frac{1}{|\Bbb S_k| |\Bbb S_N|} \ee|\ee\{(y,z) \in \Bbb S_k \times \Bbb S_N: |y+z|=N-d\rr\}\rr|\nonumber\\
&= \frac{1}{|\Bbb S_k|}  \binom{N}{k} \binom{k}{d} (m-1)^{k-d}\nonumber\\
&= m^{-k} \binom{k}{d} (m-1)^{k-d}.
\end{align} 

Finally we define a discrete random variable
\[
X :=
\begin{cases}
0 \text{ with probability } \frac{m-1}{m} \\
1 \text{ with probability } \frac{1}{m}
\end{cases}
\]
and directly compute that \eqref{blexpression} is exactly the probability that $k$ i.i.d. copies of $X$ sum to $d$.
\end{proof}

\begin{remark}\label{probabilisticexplanation}
The intuition for this result is the the convolution of $\sigma_k$ and $\sigma_N$ can be thought of as the following random process:
\begin{enumerate}
\item Pick an element of $\Bbb S_N$ uniformly at random.
\item Pick $k$ components to change uniformly at random (among $k$-subsets of $[N]$).
\item Independently choose one of the remaining $m$ values in $\Bbb Z_{m+1}$ for each of those $k$ components.
\end{enumerate}
The symmetries of the first 2 steps above easily imply that the probability mass on $\Bbb Z_{m+1}^N$ is radial. Moreover, the length of the output is independent of the first 2 steps so it is simply determined by the outcome of the final step; a $k$-fold i.i.d. process with a $\frac{1}{m}$ probability of decreasing the length by $1$ and a $\frac{m-1}{m}$ probability of preserving the length.
\end{remark}

\begin{lemma}\label{blbound}
Let $0 \leq d \leq N/m$. Then for any $j$ within $\sqrt d$ of $md$ , $b_j(d) \gtrsim d^{-1/2}$.
\end{lemma}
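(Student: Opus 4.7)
The plan is to recognize, from Lemma \ref{distantiid}, that $b_j(d) = \binom{j}{d} m^{-j}(m-1)^{j-d}$ is exactly the probability mass function of a Binomial$(j, 1/m)$ random variable evaluated at $d$. The mean and variance of this distribution are $j/m$ and $j(m-1)/m^2$, respectively, so the choice $j = md$ places the evaluation point $d$ precisely at the mean, and the window $|j - md| \leq \sqrt d$ keeps us within one standard deviation of it. By the local central limit theorem heuristic, the PMF at such points is of order $1/\sqrt{\mathrm{Var}} \asymp 1/\sqrt d$. My plan is to make this quantitative in the spirit of Lemma \ref{bigkchoice}: first pin down the peak value $b_{md}(d)$ by a direct Stirling's formula computation, then use a ratio comparison to transfer the estimate to the rest of the window.

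For the first step, Stirling's formula applied to each factorial in $(md)!/(d!\,((m-1)d)!)$ gives
\[
\binom{md}{d} \approx \frac{1}{\sqrt d}\cdot\frac{m^{md}}{(m-1)^{(m-1)d}},
\]
so that multiplying by $m^{-md}(m-1)^{(m-1)d}$ yields $b_{md}(d) \approx 1/\sqrt d$. For the second step, I compute the explicit ratio
\[
\frac{b_{j+1}(d)}{b_j(d)} = \frac{(m-1)(j+1)}{m(j+1-d)},
\]
which equals $1$ precisely when $j+1 = md$. Writing $j = md + s$ with $|s|\leq \sqrt d$ and telescoping,
\[
\ee|\log \frac{b_{md+s}(d)}{b_{md}(d)}\rr| \leq \sum_{k=1}^{|s|} \ee|\log \ee(1 - \frac{\pm k}{m((m-1)d \pm k)}\rr)\rr|;
\]
since $(m-1)d \pm k \approx d$ throughout the range, each summand is $O(k/d)$, and therefore the total sum is $O(s^2/d) = O(1)$. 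Exponentiating gives $b_j(d) \approx b_{md}(d) \gtrsim 1/\sqrt d$, as required.

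The analysis above is really careful bookkeeping rather than a conceptual hurdle; the main thing to watch is that the Stirling error term does not swallow the target $1/\sqrt d$ in a small-$d$ regime. This is handled by observing that for $d$ below any fixed constant, only finitely many pairs $(j,d)$ arise from the hypotheses of the lemma, and on this finite set $b_j(d)$ is a strictly positive quantity which is trivially bounded below by a constant; for all larger $d$, the asymptotic argument above applies uniformly.
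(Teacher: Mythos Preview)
Your proof is correct and takes a genuinely different route from the paper's. The paper fixes $j$ and works in the second argument of $b_j(\cdot)$: it invokes the classical central limit theorem to show that the mass of the $\mathrm{Bin}(j,1/m)$ distribution on each of the intervals $[j/m-2\sqrt d,\, j/m-\sqrt d]$ and $[j/m+\sqrt d,\, j/m+2\sqrt d]$ is bounded below by a positive constant, then uses pigeonhole to find points $\lambda<d<\rho$ in those intervals with $b_j(\lambda),b_j(\rho)\gtrsim d^{-1/2}$, and finally uses unimodality of $i\mapsto b_j(i)$ (via the decreasing ratio $R_j(i)=\tfrac{j-i}{(m-1)(i+1)}$) to conclude $b_j(d)\geq \min\{b_j(\lambda),b_j(\rho)\}$. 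You instead fix $d$ and vary $j$: you nail down the anchor value $b_{md}(d)\approx d^{-1/2}$ by Stirling and then telescope the ratio $b_{j+1}(d)/b_j(d)=\tfrac{(m-1)(j+1)}{m(j+1-d)}$ in $j$ to show the window $|j-md|\leq\sqrt d$ only changes $b_j(d)$ by a bounded factor. Your approach is more elementary (no appeal to the CLT) and is exactly parallel to the method already used in Lemma \ref{bigkchoice}, which makes the paper more self-contained; the paper's approach trades that directness for a softer, less computational argument. One cosmetic point: in your displayed telescoping bound the indexing is slightly off (the $k$th log term should carry $k$ rather than $\pm k$ in the numerator, as your own ratio formula gives $1-\tfrac{t}{m((m-1)d+t)}$ with $t=j+1-md$), but the $O(k/d)$ bound per term and the $O(s^2/d)=O(1)$ total are unaffected.
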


\begin{proof}
This lemma can be thought of as a pointwise application of the central limit theorem. Indeed we start by noting that by the (classical) central limit theorem, the expressions
\begin{align}\label{stddeviationsums}
\sum_{i = j/m - 2\sqrt d}^{j/m- \sqrt d} b_j(i) \text{ and } \sum_{i = j/m + \sqrt d}^{j/m + 2 \sqrt d} b_j(i)
\end{align}
converge to positive numbers as $j \to \infty$ (which is equivalent to $d \to \infty$). To see this, note that probability mass (on the variable $a$)
\[
b_j\left(a\sqrt j +\frac{j}{m}\right)
\]
converges weakly to a fixed Gaussian. Moreover, $\sqrt j \approx \sqrt d$ so both expressions in \eqref{stddeviationsums} converge to integrals of this Gaussian over fixed intervals. In particular, this implies that there exist
\[
\lambda \in [j/m - 2\sqrt d, j/m - \sqrt d], \rho \in [j/m + \sqrt d, j/m + 2 \sqrt d]
\]
such that $b_j(\lambda), b_j(\rho) \gtrsim d^{-1/2}$.

Recall from Lemma \ref{blexpression} that $b_j(i) = m^{-j} \binom{j}{i} (m-1)^{j-i}$. In the interest of proving a concavity property of $b_j$, we observe that the ratio of successive summands is
\[
R_j(i) := \frac{b_j(i+1)}{b_j(i)} = \frac{j-i}{(m-1)(i+1)}.
\]
Notice that $R_j$ decreases from $i=\lambda$ to $i=\rho$ and that $d \in [\lambda, \rho]$ (this can be computed directly from the definitions of $j$ and $d$). Therefore, if $b_j(d) \leq b_j(\lambda)$, $R_j(d) \leq 1$. Moreover, if  $R_j(d) \leq 1$, then $b_j(d) \geq b_j(\rho)$.

Thus, at least one of the inequalities $b_j(d) \geq b_j(\lambda)$ and $b_j(d) \geq b_j(\rho)$ must hold. Either way this shows $b_j(d) \geq d^{-1/2}$.

\end{proof}

\begin{proposition}
For any nonnegative function $f$ we have the pointwise inequality
\[
M^D_S f \lesssim M^L_S \big(\sigma_N * f\big)(x).
\]
in particular $M^D_S$ is weak-type $1-1$, independent of dimension. Again, by interpolation this implies dimension independent $L^p$ bounds for all $1<p\leq\infty$.
\end{proposition}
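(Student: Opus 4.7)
The plan is to derive the pointwise inequality by a coefficient-wise comparison of the two averages, after applying the convolution identity of Lemma~\ref{distantiid}.

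First, I would expand the right-hand side: for any $L \leq c_m N$,
\[
\frac{1}{L+1}\sum_{l \leq L} \sigma_l * (\sigma_N * f) = \sum_{d \geq 0} \left(\frac{1}{L+1}\sum_{l \leq L} b_l(d)\right) \sigma_{N-d} * f,
\]
with the convention $b_l(d) = 0$ for $d > l$. Because $f$ and $\sigma_N$ are nonnegative, every $\sigma_{N-d} * f$ is nonnegative. Hence the desired pointwise inequality will follow if, for each $D \leq N/(m+1)$, I can choose some $L = L(D) \leq c_m N$ so that the coefficient inequality
\[
\frac{1}{D+1}\,\mathbf{1}_{\{d \leq D\}} \;\lesssim\; \frac{1}{L+1}\sum_{l \leq L} b_l(d)
\]
holds uniformly in $d \geq 0$. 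For $d > D$ there is nothing to check, so the task reduces to producing such a bound for every $0 \leq d \leq D$.

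I would take $L(D) := mD$, which satisfies $L(D) \leq mN/(m+1) = c_m N$ and $L(D) + 1 \approx_m D + 1$, so the target inequality reduces to $\sum_{l \leq L} b_l(d) \gtrsim 1$ for each $0 \leq d \leq D$. For $d = 0$ this is immediate from $b_0(0) = 1$. For $1 \leq d \leq D$, Lemma~\ref{blbound} provides $b_l(d) \gtrsim d^{-1/2}$ for every integer $l$ within $\sqrt d$ of $md$; since $md \leq mD = L$, the sub-window $l \in [md - \sqrt d,\, md]$ lies entirely in $[0, L]$ and contributes $\gtrsim \sqrt d \cdot d^{-1/2} = 1$. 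Taking the supremum over $D \leq N/(m+1)$ then yields the pointwise bound.

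The remaining conclusions are standard. Because the kernels defining $M^D_S$ are nonnegative, we have $M^D_S f \leq M^D_S |f| \lesssim M^L_S(\sigma_N * |f|)$; combined with the dimension-free weak-type $1-1$ bound on $M^L_S$ from Proposition~\ref{smoothweaktype} and the contraction $\|\sigma_N * |f|\|_{L^1} \leq \|f\|_{L^1}$, this gives the weak-type $1-1$ bound for $M^D_S$. The trivial $L^\infty$ contraction $\|M^D_S f\|_\infty \leq \|f\|_\infty$ together with Marcinkiewicz interpolation then produces $L^p$ bounds for all $1 < p \leq \infty$. The main delicate point is the calibration $L(D) = mD$: the binomial mass $b_l(d)$ concentrates around its mean $l/m$, so capturing $\Omega(1)$ mass at $d$ forces $l \approx md$, and $D = N/(m+1)$ is precisely the largest value of $D$ for which the resulting $L = mD$ still lies under the allowed threshold $c_m N$.
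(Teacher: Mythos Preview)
Your proposal is correct and follows essentially the same route as the paper: both arguments reduce to the kernel/coefficient comparison $\sum_{l\le mD} b_l(d)\gtrsim 1$ for $0\le d\le D$ (the paper phrases it as $\sum_{d\le L/m}\sigma_{N-d}\lesssim\sum_{l\le L}\sigma_l*\sigma_N$, which is the same statement under $L=mD$), and both obtain it by restricting to $l\in[md-\sqrt d,\,md]$ and invoking Lemma~\ref{blbound}. Your treatment of the weak-type and $L^p$ conclusions is also in line with the paper (which states them more tersely).
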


\begin{proof}
Because the operators in question are suprema over positive convolution operators, we seek pointwise bounds on the convolution kernels. Moreover, it suffices to show that any $0 \leq L \leq c_mN$,
\begin{align}\label{distantspheretrasfer}
\sum_{d \leq L/m} \sigma_{N-d} \lesssim \sum_{l \leq L} \sigma_l * \sigma_N.
\end{align}
This can be seen simply by dividing the left and right sides by $L/m + 1$ and $L+1$ respectively (as these values are equivalent up to a constant) and taking a supremum over $L$. Applying Lemmas \ref{distantiid} and \ref{blbound}, the right side of \eqref{distantspheretrasfer} can be reformulated:
\begin{align*}
\sum_{l \leq L} \sigma_l * \sigma_N &= \sum_{l=0}^L \sum_{j=0}^l b_k(j) \sigma_{N-j}\\
&= \sum_{j=0}^L \sum_{l=j}^L b_l(j) \sigma_{N-l}\\
&\geq \sum_{d = 0}^{L/m} \sum_{l=dm-\sqrt d}^{dm} b_l(d) \sigma_{N-d}\\
&\gtrsim \sum_{l \leq L/m} d^{-1/2} d^{1/2} \sigma_{N-d}\\
&\geq \sum_{d \leq L/m} \sigma_{N-d}.
\end{align*}

\end{proof}

Because the interpolation techniques used in \S \ref{comparison} apply to any Ces{\`a}ro means for a sequence of Markov operators that are $L^1$ and $L^\infty$ contractions, much of the argument caries over with the modification that the opertor $P^k$ is replaced by
\[
Q^k f(x) := f * \sigma_{N-k}(x),
\]
the operator $S_n^\lambda$ is replaced by
\[
T_n^\lambda f(x) := \sum_{k \leq n} A^\lambda_{n-k} Q^k f(x), \lambda \in \C,
\]
and the operator $S_*^\lambda$ is replaced by
\[
T_*^\lambda f(x) := \max_{0 \leq n \leq \frac{N}{m+1}} \ee|\frac{T_n^\lambda f(x)}{(n+1)^{\lambda+1}}\rr|,
\]
and the operator $R_t$ is replaced by
\[
\ppt f(x)^2 := \sum_{0 \leq k \leq \frac{N}{m+1}} (k+1)^{2t-1}|T_k^{-t-1} f(x)|^2.
\]

All other definitions from \S \ref{comparison} are translated over analogously (of course the local operators will be replaced by distant operators). Note also that, following the computations of Lemma \ref{krawtchoukderivatives} and Corollary \ref{krawtchoukhigherderivatives},
\begin{align}\label{krawtchoukforwardderivative}
\big|(\F \triangle^t Q^k)(r)\big| =\ee(\frac{1}{c_m}\rr)^t\frac{\binom{N-t}{r-t}}{\binom{N}{r}} |\kappa^{N-t}_{r-t}(N-k)|.
\end{align}

Carrying over the proof of Theorem \ref{localbound} in the natural way, we can establish (modulo an analog to Proposition \ref{main}) the distant spherical bound, from which the main result Theorem \ref{MAIN} follows:

\begin{theorem}\label{distantbound}
The distant spherical maximal operator $M^D$ given by
\[
M^D f(x) = \sup_{k \leq \frac{N}{m+1}} |\sigma_{N-k} * f(x)|
\]
satisfies $L^p$ bounds for all $p > 1$ dependent only on $p$ and $m$.
\end{theorem}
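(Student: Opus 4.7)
The strategy is to transpose the proof of Theorem \ref{localbound} to the distant setting using the dictionary announced just before the statement: $P^k \mapsto Q^k$, $S_n^\lambda \mapsto T_n^\lambda$, $S_*^\lambda \mapsto T_*^\lambda$, $R_t \mapsto \ppt$. The Nevo--Stein lemmas (Lemmas \ref{1}, \ref{2}, \ref{3}) are purely formal statements about Ces\`aro means of sequences of $L^1/L^\infty$-contracting Markov operators, so they apply to the $T_n^\lambda$ with no modification. The $L^p$-boundedness of $T_*^0 = M^D_S$ has been supplied by the preceding proposition. Thus the whole proof reduces to establishing the distant analog of Proposition \ref{main}:
\[ \|\ppt f\|_{L^2(\Z_{m+1}^N)} \lesssim_{t,m} \|f\|_{L^2(\Z_{m+1}^N)}. \]

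By Plancherel, and after absorbing finitely many initial terms (as well as the case $r < t$, where the summand vanishes, and $r = t$, where $\kappa_0^{N-t} \equiv 1$) exactly as in the proof of Proposition \ref{main}, this is equivalent to showing
\[ \sum_{k=t}^{N/(m+1)} (k+1)^{2t-1} \bigl|\partial^t \kappa_r^N(N-k)\bigr|^2 \lesssim_{t,m} 1 \]
uniformly in $r > t$. Substituting \eqref{krawtchoukforwardderivative} and applying Proposition \ref{tech} yields
\[ |\partial^t \kappa_r^N(N-k)|^2 \lesssim_t \left(\frac{\binom{N-t}{r-t}}{\binom{N}{r}}\right)^2 e^{-2d(r-t)(N-k)/(N-t)}. \]

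The key observation, and the main structural difference from the local case, is that for $k \in [t, N/(m+1)]$ with $N$ sufficiently large compared to $t$, the ratio $(N-k)/(N-t)$ is bounded \emph{below} by a constant $c = c(m) > 0$ (morally $c_m$). Consequently the exponential factor is \emph{uniformly} small in $k$, bounded by $e^{-2dc(r-t)}$. In contrast with the local case, where summability in $k$ came from the $k$-dependent exponential, here the sum in $k$ is estimated trivially as $\sum_{k \leq N/(m+1)} k^{2t-1} \lesssim_t N^{2t}$. Combined with the standard estimate $\binom{N-t}{r-t}/\binom{N}{r} \approx_t (r/N)^t$, the whole sum is dominated by a constant multiple of $r^{2t} e^{-2dc(r-t)}$, which is uniformly bounded in $r$.

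Once the $L^2$-bound on $\ppt$ is in hand, the remainder of the argument is a literal transcription of the proof of Theorem \ref{localbound}: Lemma \ref{3} combined with induction on $t$ gives $L^2$-bounds on $T_*^{-t}$; Lemma \ref{2} converts these to bounds on $T_*^{-t+i\beta}$ with growth $e^{3\beta^2}$ in $\beta$; Lemma \ref{1} together with the already-known $L^p$-bounds on $T_*^0$ supplies $L^p$-bounds on $T_*^{\alpha+i\beta}$ for $\alpha > 0$; and Stein interpolation applied to a linearization of $T_*^{-1} = M^D$ completes the proof. The only technical subtlety I expect is the careful bookkeeping to verify the uniform lower bound on $(N-k)/(N-t)$ and the handling of the boundary values $r \leq t$ and small $N$, but neither presents a serious obstacle beyond what has already been resolved in the local case.
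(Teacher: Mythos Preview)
Your proposal is correct and follows essentially the same route as the paper: reduce to the $L^2$ square-function bound on $\ppt$ via the Nevo--Stein machinery, then exploit that $k\le N/(m+1)$ forces $(N-k)/(N-t)\gtrsim 1$ so the exponential from Proposition~\ref{tech} is uniformly $\lesssim e^{-d'(r-t)}$, allowing a trivial sum in $k$ of size $N^{2t}$ that is then cancelled by $\bigl(\binom{N-t}{r-t}/\binom{N}{r}\bigr)^2\approx_t (r/N)^{2t}$. This is exactly the paper's Proposition~\ref{distantsquarefunction}, and your outline of the interpolation endgame matches the local proof verbatim.
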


Thus, the only remaining element in this section is the distant sphere square function bound.

\begin{proposition}\label{distantsquarefunction}
With
\[ \aligned
\ppt f(x)^2 &= \sum_{0 \leq k \leq \frac{N}{m+1}} (k+1)^{2t-1}|T_k^{-t-1}f(x)|^2 \\
&= \sum_{0 \leq k \leq \frac{N}{m+1}} (k+1)^{2t-1} |\triangle^{t} Q^k f(x)|^2, \endaligned \]
there exist constants $C_{t,m}$ independent of $N$ so that
\[ \|\ppt f \|_{L^2(\Z_{m+1}^N)} \leq C_{t,m} \| f\|_{L^2(\Z_{m+1}^N)}\]
for all $N$.
\end{proposition}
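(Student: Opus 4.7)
The plan is to follow the proof of Proposition \ref{main}, adapted to reflect that $Q^k$ is averaging over the sphere of radius $N-k$ rather than $k$. The truncation at $k = t$ proceeds exactly as in Proposition \ref{main}: for $0 \leq k < t$, the identity $T_k^{-t-1} f = \triangle^t Q^k f$ may fail, but each operator $T_k^{-t-1}$ is a finite linear combination of $L^2$-contractions, so the contribution $\sum_{k < t}(k+1)^{2t-1}\|T_k^{-t-1} f\|_2^2 \lesssim_t \|f\|_2^2$. For the remaining range $k \geq t$, I apply Plancherel and reduce to the pointwise Fourier estimate
\[
\sum_{k=t}^{N/(m+1)} (k+1)^{2t-1} \bigl|(\F \triangle^t Q^k)(r)\bigr|^2 \lesssim_{t,m} 1
\]
uniformly in $r$ and $N$. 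By equation \eqref{krawtchoukforwardderivative}, this reduces to bounding
\[
\ee(\frac{1}{c_m}\rr)^{2t} \ee(\frac{\binom{N-t}{r-t}}{\binom{N}{r}}\rr)^{2} \sum_{k=t}^{N/(m+1)} (k+1)^{2t-1} \bigl|\kappa^{N-t}_{r-t}(N-k)\bigr|^2,
\]
which vanishes for $r < t$ and can be evaluated directly for $r = t$ using $\kappa^{N-t}_0 \equiv 1$ together with $\binom{N}{t} \approx_t N^t$.

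For the generic case $r > t$, I invoke Proposition \ref{tech} to obtain $|\kappa^{N-t}_{r-t}(N-k)| \leq e^{-d(r-t)(N-k)/(N-t)}$. The crucial observation --- and the point where the argument diverges from Proposition \ref{main} --- is that on the distant range $k \leq N/(m+1)$ we have $(N-k)/(N-t) \geq c_m$, so this bound simplifies to the \emph{uniform} estimate $|\kappa^{N-t}_{r-t}(N-k)| \leq e^{-dc_m(r-t)}$. Pulling this factor out of the sum leaves only a polynomial summation, yielding
\[
\sum_{k=t}^{N/(m+1)} (k+1)^{2t-1} \bigl|\kappa^{N-t}_{r-t}(N-k)\bigr|^2 \lesssim_t N^{2t} e^{-2dc_m(r-t)}.
\]
Combined with the binomial-ratio asymptotic $(\binom{N-t}{r-t}/\binom{N}{r})^2 \approx_t (r/N)^{2t}$, the total bound becomes $r^{2t} e^{-2dc_m(r-t)}$, which is bounded uniformly in $r \geq t$ simply because exponential decay dominates polynomial growth.

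The main obstacle is conceptual rather than computational: in the local setting, Proposition \ref{tech} gave \emph{geometric} decay in $k$, and summing the geometric series produced a factor $(N/r)^{2t}$ that precisely cancelled the binomial ratio $(r/N)^{2t}$. In the distant setting the Krawtchouk exponential is essentially flat in $k$ (varying by at most a factor of $e^{d(r-t)/(m+1)}$ across the admissible range), so this cancellation mechanism is no longer available; one instead relies on the uniform factor $e^{-2dc_m(r-t)}$ alone to kill the $r^{2t}$ coming from the binomial ratio. This is precisely why the restriction $k \leq N/(m+1)$ in the definition of $\ppt$ is essential, and why the local/distant splitting made in $\S \ref{distant}$ was forced upon us to begin with.
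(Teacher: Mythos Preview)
Your proof is correct and follows essentially the same approach as the paper: reduce via Plancherel to a uniform multiplier bound, apply \eqref{krawtchoukforwardderivative} and Proposition~\ref{tech}, use the constraint $k \leq N/(m+1)$ to replace the exponent $(r-t)(N-k)/(N-t)$ by a uniform-in-$k$ multiple of $r-t$, then sum the polynomial in $k$ and absorb the resulting $r^{2t}$ into the exponential. Your treatment is slightly more explicit than the paper's (you separate the $k<t$ truncation and the $r=t$ case, which the paper folds into ``much of the proof of Proposition~\ref{main} carries over''), and your closing paragraph on why the uniform-in-$k$ bound replaces the geometric-series cancellation of the local case is a nice conceptual addition not present in the paper.
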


\begin{proof}
Much of the proof of Proposition \ref{main} carries over. In fact, the fact that all spheres appearing in $\ppt$ have radii on the order of $N$ makes the bound simpler.

For any $r \geq t$, we bound
\begin{align*}
\sum_{k=0}^{\frac{N}{m+1}} (k+1)^{2t-1} |\F \Delta^t Q^k|^2(r) &= \sum_{k=0}^{\frac{N}{m+1}} (k+1)^{2t-1} \ee(\frac{1}{c_m}\rr)^{2t} \ee(\frac{\binom{N-t}{r-t}}{\binom{N}{r}} \ee|\kappa^{N-t}_{r-t}(N-k)\rr|\rr)^2\\
&\lesssim_t \sum_{k=0}^{\frac{N}{m+1}} (k+1)^{2t-1} \frac{\binom{N-t}{r-t}^2}{\binom{N}{r}^2} \exp\ee(-2d\frac{(r-t)(N-k)}{N-t}\rr)\\
&\leq \sum_{k=0}^{\frac{N}{m+1}} (k+1)^{2t-1} \frac{\binom{N-t}{r-t}^2}{\binom{N}{r}^2}e^{-d(r-t)}\\
&\lesssim_t N^{2t} \ee(\frac{r}{N}\rr)^{2t} e^{-dr}
\end{align*}
where we used the fact that $k \leq \frac{N}{m+1}$ to pass to the second-to-last line and the estimates at the end of the proof of Proposition \ref{main} to pass to the last line. Because $e^{-dr} \lesssim_t r^{-2t}$, this proves the desired inequality 
\[
\sum_{k=0}^{\frac{N}{m+1}} (k+1)^{2t-1} |\F \Delta^t Q^k|^2(r) \lesssim_t 1.
\]
\end{proof}

\section{Proof of Proposition \ref{tech}}\label{krawtchouk}
First we introduce the notation
\[
a_j =\binom{N}{k}^{-1} \binom{r}{j} \binom{N-r}{k-j} m^{-j}
\]
for the magnitude of the $j$th summand in the full expression for $\kappa^N_k(r)$, which we recall is given by 
\begin{align}\label{krawtchoukdefinition}
\kappa^N_k(r) = \sum_{j=\max(0,r+k-N)}^{\min(r,k)} (-1)^j \binom{N}{k}^{-1} \binom{r}{j} \binom{N-r}{k-j} m^{-j}.
\end{align}

We restate the proposition for the reader's convenience:

\begin{prop}[restatement]
There exists a constant $d$ (dependent only on $m$) such that for all $r,k,N$ we have
\[ |\kappa_k^N(r)| \leq e^{-d (rk/N)}.\]
\end{prop}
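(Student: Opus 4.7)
\medskip

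The plan is to reinterpret $\kappa_k^N(r)$ as a complex exponential moment of a hypergeometric random variable, kill the oscillation with the triangle inequality, and then extract exponential decay from Maclaurin's inequality on elementary symmetric polynomials.

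First I would observe that the numbers $\binom{r}{j}\binom{N-r}{k-j}/\binom{N}{k}$ are precisely the probability masses of a hypergeometric variable $J \sim \mathrm{Hyp}(N,r,k)$ --- the number of marked balls obtained when $k$ balls are drawn without replacement from an urn containing $r$ marked and $N-r$ unmarked balls. Factoring $\binom{N}{k}^{-1}$ out of \eqref{krawtchoukdefinition} then rewrites the polynomial as the single expectation
\[
\kappa_k^N(r) \;=\; \EE\bigl[(-1/m)^{J}\bigr],
\]
and the triangle inequality gives the clean reduction $|\kappa_k^N(r)| \le \EE\bigl[m^{-J}\bigr]$.

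Next I would identify the right-hand side with a normalized elementary symmetric polynomial. Let $w \in \RR^N$ be the weight vector having $r$ coordinates equal to $1/m$ and the remaining $N-r$ coordinates equal to $1$; summing over size-$k$ subsets $S \subseteq [N]$ according to how many coordinates lie in the ``$1/m$'' pile yields $\EE[m^{-J}] = \binom{N}{k}^{-1} e_k(w) =: p_k(w)$, where $e_k$ denotes the $k$-th elementary symmetric polynomial. A one-line calculation gives the first symmetric mean,
\[
p_1(w) \;=\; \frac{1}{N}\Bigl(\frac{r}{m} + (N-r)\Bigr) \;=\; 1 - \frac{r}{N}\cdot\frac{m-1}{m}.
\]

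The final step is Maclaurin's inequality, which states $p_k(w)^{1/k} \le p_1(w)$ whenever $w \ge 0$ (this is what one obtains by telescoping Newton's inequalities for polynomials with real non-negative roots). Combined with $1 - x \le e^{-x}$, this yields
\[
|\kappa_k^N(r)| \;\le\; p_1(w)^{k} \;=\; \Bigl(1 - \tfrac{r(m-1)}{Nm}\Bigr)^{k} \;\le\; \exp\!\Bigl(-\tfrac{m-1}{m}\cdot \tfrac{rk}{N}\Bigr),
\]
proving the proposition with $d = (m-1)/m$. The only insight required is the hypergeometric reinterpretation, which converts a highly oscillatory signed Krawtchouk sum into a real-valued expectation amenable to classical symmetric-function machinery; everything else is bookkeeping. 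If one prefers a self-contained proof, negative association of the indicators $\{\mathbf{1}_{i \in S}\}$ for a uniform $k$-subset $S$ gives the same bound via the coordinate-wise factorization $\EE\bigl[\prod_i m^{-\mathbf{1}_{i \in S}}\bigr] \le \prod_i \EE\bigl[m^{-\mathbf{1}_{i \in S}}\bigr]$, with the roles of $r$ and $k$ swapped.
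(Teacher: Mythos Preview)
Your proof is correct and considerably shorter than the paper's. The two arguments diverge at the very first step. The paper controls the alternating sum directly: it shows (Lemma~\ref{dominantterm}) that $|\kappa_k^N(r)|$ is bounded by its single largest summand magnitude $a_n$, locates the index $n$ of that peak via the quadratic formula (Lemma~\ref{peaklemma}), and then splits into three cases to extract $a_n \le e^{-d\,rk/N}$ for an implicit $d$. You instead discard the signs immediately via $|\EE[(-1/m)^J]|\le \EE[m^{-J}]$, recognise the resulting positive sum as a normalised elementary symmetric polynomial $p_k(w)$, and invoke Maclaurin's inequality to compare it with $p_1(w)^k$.

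What each buys: your route is essentially a two-line argument once the hypergeometric reformulation is spotted, and it produces the explicit constant $d=(m-1)/m$ (which is $\ge 1/2$ in the paper's regime $m\ge 2$), sharper than the unspecified $d=\epsilon\ln m$ emerging from the paper's case analysis. The paper's approach, by contrast, is entirely self-contained---it uses only monotonicity of alternating sums and elementary algebra, with no appeal to Maclaurin/Newton or to negative association---so it may be preferable for readers unfamiliar with symmetric-function inequalities. Its intermediate Lemma~\ref{dominantterm} (domination by the peak term) is also of some independent interest for Krawtchouk asymptotics. But for the purpose at hand your argument is both simpler and quantitatively stronger.
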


By the symmetry of the Krawtchouk polynomials in $r$ and $k$, without loss of generality $r \leq k$ so the sum \eqref{krawtchoukdefinition} will terminate at $r$. The thrust of the proof is to show that the largest summand magnitude in \eqref{krawtchoukdefinition} decays exponentially in $rk/N$ so Lemma \ref{dominantterm} below will prove the proposition.

For the remainder of the section we define
\[
\ell := \max(0, r+k-N)
\]
to be the lowest index of summation. Also we define $n$ to be the lowest index in the region of summation, i.e. $[\ell, r] \cap \Z$, such that $a_n$ is a maximal summand magnitude. In other words, $n \in \Z$ is minimal subject to the constraints that $\ell \leq n \leq r$ and $a_j \leq a_n$ for all $j \in \Z$ in that range.

\begin{lemma}\label{dominantterm}
Each Krawtchouk polynomial is dominated by its maximal summand magnitude. More concretely, $\left|\kappa^N_k(r)\right| \leq a_n$.
\end{lemma}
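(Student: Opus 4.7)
The plan is to exploit two structural features of the alternating sum defining $\kappa_k^N(r)$: the signs alternate in $j$, and the magnitudes $(a_j)_{j=\ell}^{r}$ form a unimodal sequence peaked at $n$. Combined with the standard alternating-series estimate, these two facts force the total to be dominated by the single largest summand.

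The first step is to establish unimodality via the consecutive ratio. Assuming $r \le k$ (by symmetry) the summation runs over $\ell \le j \le r$, and a direct computation gives
\[
\frac{a_{j+1}}{a_j} \;=\; \frac{(r-j)(k-j)}{m(j+1)(N-r-k+j+1)}.
\]
Both factors $(r-j)/(j+1)$ and $(k-j)/(N-r-k+j+1)$ are strictly decreasing in $j$ on this range (the denominators remain positive because $j \ge \ell = \max(0, r+k-N)$), so the whole ratio is monotone decreasing in $j$. Hence once it drops below $1$ it stays below $1$, which shows that $a_j$ is non-decreasing on $[\ell, n]$ and non-increasing on $[n, r]$.

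Next I would split the alternating sum at the peak:
\[
\kappa_k^N(r) \;=\; \sum_{j=\ell}^{n}(-1)^j a_j \;+\; \sum_{j=n+1}^{r}(-1)^j a_j \;=:\; S_1 + S_2.
\]
Reindexing $S_1$ by $i = n-j$ yields $S_1 = (-1)^n \sum_{i=0}^{n-\ell}(-1)^i a_{n-i}$, in which $(a_{n-i})_i$ is non-negative and non-increasing; the classical alternating-series estimate for $\sum (-1)^i b_i$ with $b_0 \ge b_1 \ge \cdots \ge 0$ then gives $S_1 = (-1)^n T_1$ with $0 \le T_1 \le a_n$. An identical reindexing $i = j-(n+1)$ of $S_2$ produces $S_2 = (-1)^{n+1} T_2$ with $0 \le T_2 \le a_{n+1} \le a_n$.

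Finally, because $S_1$ and $S_2$ carry opposite signs $(-1)^n$ and $(-1)^{n+1}$,
\[
|\kappa_k^N(r)| \;=\; |S_1 + S_2| \;=\; |T_1 - T_2| \;\le\; \max(T_1, T_2) \;\le\; a_n,
\]
which is the claimed inequality. The only real subtlety is this sign bookkeeping at the end: a naive triangle inequality would give only the weaker bound $2a_n$, and the whole point of splitting \emph{exactly} at the peak is to arrange that the two halves must subtract in absolute value rather than add. Everything else is routine once monotonicity of $a_{j+1}/a_j$ is in hand.
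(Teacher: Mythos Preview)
Your proof is correct and essentially identical to the paper's: both compute the ratio $a_{j+1}/a_j$, observe it is decreasing to deduce unimodality of $(a_j)$, split the alternating sum at the peak $n$, and then use that the two halves are alternating monotone sums of opposite sign to bound $|\kappa_k^N(r)|$ by $\max(a_n,a_{n+1})=a_n$. Your write-up is slightly more explicit about the reindexing and the sign bookkeeping, but the argument is the same.
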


\begin{proof}
We begin by noting that the ratio $a_{j+1}/a_j$ is given by
\[
R(j) := \frac{(r-j)(k-j)}{m(j+1)(j+N-r-k+1)}.
\]
We view $R$ as a function on the real interval $(\ell - 1, r]$ rather than restricting it to the integers. Its key properties for this lemma are
\begin{enumerate}[(i)]
\item $R(j) \geq 0$,
\item $R(j)$ is continuously (strictly) decreasing,
\item $R(j)$ approaches $+\infty$ as $j$ approaches $\ell - 1$, and
\item $R(r) = 0$.
\end{enumerate}
Property (i) above follows from the fact that all factors in $R(j)$ are nonnegative. Property (ii) is a result of the factors in the numerator diminishing in magnitude and the factors in the denominator growing. Property (iii) follows from property (i) and the fact that $R$ has a pole at $\ell - 1$ while property (iv) is trivial.

By the intermediate value theorem, properties (ii), (iii), and (iv) imply that there exists some $J \in (\ell-1, r)$ such that $R(J)=1$. Applying property (ii), we see that for all integers $j$ in the region of summation,
\begin{align}\label{krawtchoukmonotonicity}
\begin{split}
&j \leq J \implies R(j) \geq 1 \implies a_{j+1} \geq a_j\\
&j \geq J \implies R(j) \leq 1 \implies a_{j+1} \leq a_j.
\end{split}
\end{align}
In particular, this means that $a_{\lceil J \rceil}$ is a maximal summand magnitude. Note that because $R(j)$ is strictly decreasing, $R(\lceil J \rceil - 1) > 1$ so $a_{\lceil J \rceil} > a_{\lceil J \rceil - 1}$. Thus $\lceil J \rceil$ must minimal among indices of maximal summand magnitudes, i.e. $n = \lceil J \rceil$.

Finally, we can bound $\left|\kappa^N_k(r)\right|$ by splitting it into two monotonic alternating sums, namely
\[
\kappa^N_k(r) = \left(\sum_{j=0}^n (-1)^j a_j\right) + \left(\sum_{j=n+1}^r (-1)^j a_j\right)
\]
where the monotinicity is a direct consequence of \eqref{krawtchoukmonotonicity}. Note that the second sum above may be empty, but we can ignore this by defining $a_{r+1}$ to be $0$.

Because they are monotonic and alternating, the sums are bounded between $0$ and their respective largest magnitude summands, namely $\pm a_n$ and $\mp a_{n+1}$. Because these bounds have opposite signs, we can bound $\left|\kappa^N_k(r)\right|$ by the maximum of their magnitudes, namely $a_n$.
\end{proof}

To bound $a_n$ we first bound $n$ from below. This technical lemma is largely comprised of algebraic and calculus manipulations.

\begin{lemma}\label{peaklemma}
If $n > 0$ and $rk \geq 2Nm$ then $n \gtrsim rk/N$.
\end{lemma}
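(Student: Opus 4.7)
The strategy is to work directly with the real number $J \in (\ell-1, r)$ determined by $R(J)=1$, since by the proof of Lemma \ref{dominantterm} we have $n = \lceil J \rceil \geq J$. The plan is to pick a target $J_0 := c \cdot rk/N$ for a constant $c = c(m) > 0$ to be chosen, and show $J \geq J_0$; because $R$ is strictly decreasing, this will follow once we verify the single algebraic inequality $R(J_0) \geq 1$ at the test point $J_0$.

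Writing $M := N - r - k + 1$, the inequality $R(J_0) \geq 1$ is equivalent to
\[
rk - mM \;\geq\; [(r+k) + m(M+1)]\, J_0 \;+\; (m-1)\, J_0^{2}.
\]
Divide both sides by $rk$. On the left, the hypothesis $rk \geq 2mN$ together with $M \leq N$ forces $(rk - mM)/rk \geq 1/2$ (and the left side is even larger when $M \leq 0$). On the right, after substituting $J_0 = c\cdot rk/N$, each of the three contributions is at most a multiple of $c$ or $c^{2}$: the factor $(r+k)/N$ is bounded by $2$, the factor $m(M+1)/N$ is bounded by $2m$ whenever it is positive (and contributes nonpositively otherwise), and $rk/N^{2} \leq 1$. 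The upshot is a bound of the shape $2c(m+1) + (m-1)c^{2}$, which is strictly less than $1/2$ as soon as $c$ is a sufficiently small constant depending only on $m$, for instance $c = 1/(16(m+1))$.

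One caveat: $R(J_0)$ is only defined and positive when $J_0 > \ell - 1$, i.e.\ $J_0 + M > 0$, so the algebraic rearrangement above is only legitimate in that regime. In the complementary case $J_0 \leq \ell - 1$, however, the conclusion is immediate, since by construction $J > \ell - 1 \geq J_0$. Combining the two cases yields $n \geq J \geq J_0 = c \cdot rk/N$, which is the desired conclusion.

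The main obstacle is the bookkeeping in the algebraic inequality: one needs to verify that a single constant $c = c(m)$ absorbs all three error terms on the right-hand side uniformly across the three qualitatively distinct regimes $r+k \ll N$, $r+k \approx N$, and $r+k > N$ (in which $M$ changes sign). The role of the hypothesis $rk \geq 2mN$ is precisely to create the margin of $1/2$ on the left that makes this absorption possible; any weaker lower bound on $rk$ would shrink this margin and force $c \to 0$.
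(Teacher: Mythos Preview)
Your proof is correct and takes a genuinely different route from the paper's. The paper solves the equation $R(J)=1$ explicitly via the quadratic formula, obtaining $J = C + \sqrt{C^{2}+A}$ for explicit constants $A,C$ in $r,k,N,m$; it then shows $A \gtrsim rk$ and splits into the cases $A > 3C^{2}$ and $A \le 3C^{2}$ to deduce $\sqrt{C^{2}+A}-|C| \gtrsim rk/N$. You bypass the quadratic formula entirely: instead of locating the root and bounding it from below, you plant a test point $J_{0} = c\,rk/N$ and verify the single polynomial inequality $R(J_{0}) \ge 1$, letting the strict monotonicity of $R$ carry you to $J \ge J_{0}$. Your handling of the degenerate regime $J_{0}+M \le 0$ (where the rearrangement would flip) by falling back on $J > \ell-1 \ge J_{0}$ is clean and correct.

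The trade-off: the paper's explicit formula gives, in principle, sharper information about $J$ (and makes the dependence of the implicit constant on $m$ more transparent), at the cost of heavier algebra and a somewhat ad hoc case split. Your comparison argument is shorter and more robust---it would transfer more easily to a perturbed ratio function---and the bookkeeping reduces to three elementary bounds ($(r+k)/N \le 2$, $m(M+1)/N \lesssim m$ when positive, $rk/N^{2} \le 1$) plus the margin $1/2$ furnished by the hypothesis $rk \ge 2mN$. Both proofs use that hypothesis in exactly the same way, namely to force $mM/(rk) \le 1/2$.
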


For the sake of clarity we point out that the hypothesis $rk \geq 2Nm$ proves $n > 0$ a posteriori, however it is more efficient to handle the $n = 0$ case separately.

\begin{proof}
We recall from Lemma \ref{dominantterm} that the ratio $a_{j+1}/a_j$ is given by
\[
R(j) := \frac{(r-j)(k-j)}{m(j+1)(j+1+N-r-k)}.
\]

To solve the equation $R(j) = 1$, we apply the quadratic formula to the quadratic
\[ \aligned
&\big[m(j+1)(j+1+N-r-k)\big]-\big[(r-j)(k-j)\big]\\
&=(m-1)j^2 + [2m+Nm-(m-1)(r+k)]j + [m+Nm-rk-km-rk].
\endaligned \]

This reveals that $R$ can equal $1$ only at the values
\[
j_{\pm} := C \pm \sqrt{C^2+ A}
\]

Where 
\begin{align}
A &:= -\frac{4(m-1)(m+Nm-rm-km-rk)}{4(m-1)^2} = \bigg(\frac{rk-Nm}{m-1} + \frac{rm+km-m}{m-1}\bigg)\label{defA}\\
C &:= -\frac{2m+Nm-(m-1)(r+k)}{2(m-1)} = \bigg(\frac{r+k}{2} - \frac{m}{m-1} - \frac{Nm}{2(m-1)}\bigg)\label{defC}.
\end{align}

We will show
\begin{enumerate}[(I)]
\item $A > 0$,
\item $A \gtrsim rk$, and
\item $\sqrt{C^2 + A} - |C| \gtrsim rk/N$.
\end{enumerate}

Item (I) above implies that $j_- < 0 < j_+$. We saw in the proof of Lemma \ref{dominantterm} there exists $J \in (\ell - 1,r)$ such that $R(J) = 1$ and $n = \lceil J \rceil$. It follows that $J = j_{\pm}$ and, by the assumption $n > 0$, that $J>0$. Therefore $J = j_+$ simply by default.

Item (II) is the key element in the proof of item (III). Item (III) shows that
\[
n \geq J \gtrsim rk/N
\]
simply because, regardless of the sign of $C$,
\[
J = C + \sqrt{ C^2 + A} \geq \sqrt{ C^2 + A} - |C| \gtrsim rk/N.
\]
Therefore all that remains in the lemma is to justify (I), (II), and (III).

\bigskip
\emph{Justification of (I) and (II):}

In light of fact that $r$ and $k$ are positive integers, simple arithmetic shows that
\[
\frac{rm+km-m}{m-1} > 0.
\]

and, because $rk \geq 2Nm$,
\[
\frac{rk-Nm}{m-1} \geq \frac{1}{2(m-1)} rk.
\]
Adding these two inequalities, the last expression in \eqref{defA} shows $A >0$ and $A \gtrsim rk$.

\bigskip
\emph{Justification of (III):}

We split into two cases.

\begin{enumerate}[\text{Case }1:]
\item If $A > 3C^2$, then
\[ \sqrt{C^2 + A} - |C| \geq A^{1/2} - (A/3)^{1/2} \gtrsim A^{1/2} \]

We know that $A \gtrsim rk$ and $(rk)^{1/2} \leq N$ by item (II) and the bound $r, k \leq N$ respectively. It follows that
\[ A^{1/2} \gtrsim (rk)^{1/2} = \frac{rk}{(rk)^{1/2}} \geq \frac{rk}{N}.\]

\item If $A \leq 3C^2$, then we apply the mean value theorem to observe that
\begin{align*}
\sqrt{C^2+A}-|C| &\geq \ee(\inf_{x \in [C^2, C^2 + A]} \frac{1}{2x^{1/2}}\rr) A\\
&\geq \frac{A}{2(4C^2)^{1/2}}\\
&\gtrsim \frac{rk}{N}.
\end{align*}

The final inequality follows from the bounds $A \gtrsim rk$ and $|C| \lesssim N$. The former is again item (II) and the latter comes from the fact that each term in the last expression of \eqref{defC} is bounded in magnitude by $2$ or $N$.
\end{enumerate}

Thus, regardless of $A$, $\sqrt{C^2 + A} - |C| \gtrsim rk/N$.
\end{proof}

From here Proposition \ref{tech} is fairly straightforward.

\begin{proof}[Proof of Proposition \ref{tech}]

First we use the combinatorial observation
\[ \aligned
\binom{N}{k} &= \bigg|\bigg\{S \subset [N]: |S| = k\bigg\}\bigg|\\
&\geq \bigg|\bigg\{S \subset [N]: |S| = k, \big|S \cap [r]\big| = j\bigg\}\bigg| = \binom{r}{j} \binom{N-r}{k-j}
\endaligned \]
to justify the inequality
\begin{align}\label{cubecase}
a_j = \binom{N}{k}^{-1} \binom{r}{j} \binom{N-r}{k-j} m^{-j} \leq m^{-j}.
\end{align}
for all $j$ in the region of summation.

This bound is useful because in order to prove the proposition, it is sufficient to prove $a_n \leq e^{-d(rk/N)}$ by Lemma \ref{dominantterm}. To this end, we split into three cases.
\begin{enumerate}[\text{Case } 1:]
\item The hypotheses of Lemma \ref{peaklemma} hold, i.e. $n > 0$ and $rk \geq 2mN$. Then there is an index $n$ and a (small) constant $\epsilon >0$ such that
\[
n \geq \epsilon \frac{rk}{N}.
\]
Letting $d := \epsilon\ln m > 0$, this shows $a_n \leq e^{-d(rk/N)}$ by \eqref{cubecase}.

\item $n>0$ and $rk < 2mN$. Because $m \geq 2$ and $n \geq 1$ by assumption, \eqref{cubecase} provides the inequality $a_n \leq 1/2$. Moreover, the assumption $rk < 2mN$ implies that
\[
e^{-2m} \leq e^{-rk/N}.
\]
Then we simply decrease $d$ to a small enough (positive) number that $1/2 \leq e^{-2md}$, to achieve the desired bound
\[
a_n \leq 1/2 \leq e^{-2md} \leq e^{-d(rk/N)}.
\]

\item $n=0$. We assume $r > 0$ because otherwise the entire proposition is trivial. Also, because
\[
\max(0, r+k-N) \leq n = 0,
\]
we know that $r+k \leq N$ so the factors below are all well defined. Then we bound as follows:
\[ \aligned
a_0 &= \binom{N}{k}^{-1} \binom{N-r}{k}\\
&= \prod_{j=0}^{k-1} \frac{N-r-j}{N-j}\\
&\leq \left(\frac{N-r}{N}\right)^k\\
&= \left[\left(1 - \frac{r}{N} \right)^{\frac{N}{r}}\right]^{rk/N}\\
&\leq e^{-rk/N}
\endaligned \]
Because we are free to assume $d \leq 1$, this completes the proof of proposition \ref{tech}.
\end{enumerate}\end{proof}

\end{document}